\documentclass[11pt,article]{amsart}
\usepackage{graphicx}  
\usepackage{booktabs}   
\usepackage{verbatim}   
\usepackage{url}       
\usepackage{cases}
\usepackage{amsmath, amsfonts, pifont, amssymb, xcolor}
\usepackage{verbatim}
\usepackage[bookmarks=true]{hyperref}
\usepackage[margin=1 in]{geometry}
\usepackage[numbers,sort]{natbib}
\usepackage{mathtools}
\mathtoolsset{showonlyrefs}
\usepackage{graphicx}
\usepackage{tikz}
\usepackage{tkz-euclide}
\usepackage{booktabs}
\newtheorem{theorem}{Theorem}[section]
\newtheorem{lemma}[theorem]{Lemma}
\newtheorem{proposition}[theorem]{Proposition}
\newtheorem{corollary}[theorem]{Corollary}

\newcommand{\R}{\mathbb{R}}
\newcommand{\bR}{\mathbb{R}}
\newcommand{\Z}{\mathbb{Z}}
\newcommand{\N}{\mathbb{N}}
\newcommand{\T}{\mathbb{T}}

\newcommand{\beq}{\begin{equation}}
\newcommand{\eeq}{\end{equation}}
\newcommand{\beqq}{\begin{equation*}}
\newcommand{\eeqq}{\end{equation*}}
\newcommand{\less}{\lesssim}
\newcommand{\gt}{\gtrsim}

\newcommand{\lf}{\left}
\newcommand{\ri}{\right}
\newcommand{\w}{\widetilde}

\theoremstyle{definition}
\newtheorem{definition}[theorem]{Definition}

\theoremstyle{remark}
\newtheorem{remark}[theorem]{Remark}

\numberwithin{equation}{section}

\numberwithin{equation}{section}

\allowdisplaybreaks[4]

\begin{document}

\address{Yangkendi Deng
\newline \indent Department of Mathematics and Statistics, Beijing Institute of Technology.
\newline \indent Key Laboratory of Algebraic Lie Theory and Analysis of Ministry of Education.
\newline \indent  Beijing, China. \indent}
\email{dengyangkendi@bit.edu.cn}

\address{Han Wang
\newline \indent Department of Mathematics and Statistics, Beijing Institute of Technology.
\newline \indent Key Laboratory of Algebraic Lie Theory and Analysis of Ministry of Education.
\newline \indent  Beijing, China. \indent}
\email{wang\_han@bit.edu.cn}

\address{Yuzhao Wang
\newline \indent School of Mathematics, University of Birmingham, Birmingham, UK.}
\email{y.wang.14@bham.ac.uk}

\address{Zehua Zhao
\newline \indent Department of Mathematics and Statistics, Beijing Institute of Technology.
\newline \indent Key Laboratory of Algebraic Lie Theory and Analysis of Ministry of Education.
\newline \indent  Beijing, China. \indent}
\email{zzh@bit.edu.cn}

\title[On restricted-type Strichartz estimates]{On restricted-type Strichartz estimates and the applications}
\author{Yangkendi Deng, Han Wang, Yuzhao Wang and Zehua Zhao}

\subjclass[2020]{Primary: 35Q55; Secondary: 35R01, 37K06, 37L50}

\keywords{Zakharov system, nonlinear Schr\"odinger equation, supercritical NLS, Strichartz estimate, shell-type Strichartz estimate, strip-type Strichartz estimate, waveguide manifold, well-posedness, semi-algebraic set.}

\begin{abstract}
We establish a rigorous framework for the Zakharov system on waveguide manifolds $\mathbb{R}^m \times \mathbb{T}^n$ ($m,n\geq 1$), which models the nonlinear coupling between optical and acoustic modes in confined geometries such as optical fibers. Our analysis reveals that the sharp \textit{shell-type Strichartz estimate} for $\mathbb{R}^2 \times \mathbb{T}$ is globally valid in time and exhibits no derivative loss via the measure estimate of semi-algebraic sets, unlike the periodic case studied in  \cite{MR4665720}. In addition, we demonstrate that such an estimate fails on the product space $\mathbb{R} \times \mathbb{T}^2$ by constructing a counter-example. 

Moreover, we derive analogues of these shell-type estimates in other dimensions, both in the waveguide and Euclidean settings. As a direct application, we establish, for the first time, a local well-posedness theory for the partially periodic Zakharov system. To summarize, we compare shell-type Strichartz estimates in different settings (the Euclidean, the periodic, and the waveguide).

Numerical verification on $\mathbb{R}^2\times\mathbb{T}$ reveals a uniform $L^4$-spacetime bound, while $\mathbb{R}\times\mathbb{T}^2$ exhibits sublinear growth, quantitatively confirming the theoretical dichotomy between geometries with different dimensional confinement.

These findings advance the understanding of dispersive effects in hybrid geometries and provide mathematical foundations for efficient waveguide design and signal transmission.

Finally, for the Euclidean case, we  establish well-posedness theory for supercritical nonlinear Schr\"odinger equation (NLS) with \textit{strip-type} frequency-restricted initial data, revealing a trade-off between dispersion and confinement, which is of independent mathematical interest. This provides a deterministic analogue to random data theory of NLS.

\end{abstract}

\maketitle

\setcounter{tocdepth}{1}
\tableofcontents

\parindent = 10pt     
\parskip = 8pt

\section{Introduction}

\subsection{Background and motivations}
In this subsection, we briefly recall the physical background and mathematical challenges associated with the Zakharov system on waveguide geometries.

The Zakharov system, originally derived to model Langmuir wave interactions in plasmas \cite{zakharov1972collapse}, has become a fundamental model in nonlinear optics, particularly in waveguide-based telecommunications. In this setting, optical fibers are often modeled by waveguide manifolds of the form $\mathbb{R}^m \times \mathbb{T}^n$ with $m, n \geq 1$, where the Zakharov system captures the coupling between high-frequency light waves ($u$) and low-frequency acoustic modes ($v$). This interaction is critical for understanding signal dispersion and distortion in optical fiber systems. Mathematically, the hybrid Euclidean-periodic geometry introduces significant analytical challenges: unlike purely Euclidean or periodic domains, the coexistence of continuous and discrete Fourier modes necessitates new techniques for deriving dispersive estimates.

Specifically, we investigate the Cauchy problem for the Zakharov system in the waveguide setting\footnote{It is also known as partially periodic Zakharov system or semi-periodic Zakharov system.}, formulated as follows:
\begin{equation}\label{Zakharov}
\begin{cases}
i \partial_t u  + \Delta u = vu , \\
\partial_t^2 v -\Delta v  = \Delta(|u|^2),\\
(u(0), v(0), \partial_t v(0)) = (u_0, v_0, v_1),
\end{cases}
\end{equation}
where the unknown functions $u$ and $v$ are complex- and real-valued functions, respectively. We investigate \eqref{Zakharov} with partially periodic boundary conditions, namely, $u:  \mathbb{R} \times \mathbb{R}^m \times \mathbb{T}^n\to \mathbb{C}$ and $v:  \mathbb{R} \times \mathbb{R}^m \times \mathbb{T}^n \to \R$. 

Furthermore, we consider initial data from standard $L^2$-based Sobolev spaces as follows,
\[
(u_0, v_0, v_1) \in  H^s(\mathbb{R}^m \times \mathbb{T}^n) \times H^{\ell}(\mathbb{R}^m \times \mathbb{T}^n) \times H^{\ell-1}(\mathbb{R}^m \times \mathbb{T}^n) =: \mathcal{H}^{s,\ell}(\mathbb{R}^m \times \mathbb{T}^n)
\]
with $s, \ell \in \R$ and the whole dimension $m+n=d$ ($m,n\geq 1$).\footnote{In some literature, notation $n$ is used for the solution $v$ in \eqref{Zakharov}. Here we use $v$ instead of $n$ to avoid ambiguity since $n$ indicates the dimension of the periodic component in our paper.} 

To study the well-posedness theory of the periodic Zakharov system, a key ingredient is the \textit{shell-type} Strichartz estimate (see \cite{MR4665720} for more details\footnote{Since Zakharov system is a coupled system of Schr\"odinger and wave equations, it is important to exploit the resonance phenomenon between the Schr\"odinger and the wave. Based on the shell-type Strichartz estimates, a trilinear Fourier restriction estimate involving
the paraboloid and cone can be obtained, which facilitates the establishment of the well-posedness theory of the periodic Zakharov systems.}).  Motivated by recent progress on periodic geometries, we investigate the shell-type Strichartz estimates in the waveguide setting. This setting introduces novel challenges and possesses independent analytic significance. A special focus of our analysis lies in the three-dimensional waveguide $\mathbb{R}^2 \times \mathbb{T}$ in \eqref{Zakharov} due to our analysis interests, which reveals a distinct behavior from the periodic and other partially periodic settings. In fact, we will demonstrate that, the shell-type Strichartz estimate on $\mathbb{R}^2 \times \mathbb{T}$ is qualitatively distinct from the periodic case $\mathbb{T}^3$ and the $\mathbb{R} \times \mathbb{T}^2$ case. We will present explicit descriptions for the shell-type Strichartz estimates in Subsection \ref{1.2} and compare the shell-type Strichartz estimates in three main different settings (see Section \ref{7}).

The Zakharov system describes the interaction between high-frequency Langmuir waves and low-frequency ion-acoustic waves in a plasma. It was first introduced by Vladimir Zakharov in 1972 and has since become a fundamental model in plasma physics and nonlinear wave theory. The Zakharov system is a coupled system of nonlinear Schr\"odinger equation and nonlinear wave equation and the corresponding Cauchy problem for the Zakharov system (for both the classical Euclidean case and the periodic case) has been studied extensively in recent decades. If one replaces $\mathbb{R}^m \times \mathbb{T}^n$ in \eqref{Zakharov} by $\mathbb{R}^d$, \eqref{Zakharov} is in the standard Euclidean setting and we refer to \cite{ozawa1992existence,bourgain1993fourier,bourgain1996wellposedness,ginibre1997cauchy,klainerman1993erratum}; if one replaces $\mathbb{R}^m \times \mathbb{T}^n$ in \eqref{Zakharov} by $\mathbb{T}^d$, \eqref{Zakharov} is in the periodic setting and we refer to \cite{MR4665720} for a recent result (see the references therein). We refer to \cite{MR4665720} for more introduction.

The Zakharov system investigated in this paper is posed on a waveguide manifold (see \eqref{Zakharov}). Therefore, we first present a concise overview of dispersive equations on waveguide geometries. Since the nonlinear Schr\"odinger equation (NLS) is the dispersive model that has been most intensively studied in the waveguide setting, we provide a concise overview of our research focus: ``\textit{(nonlinear) Schr\"odinger equations on waveguide manifolds}". This direction has emerged as a central topic in nonlinear dispersive PDEs over the past decades.

Waveguide manifolds (also known as ``waveguides" for short or semi-periodic spaces), denoted as $\R^m \times \mathbb{T}^n$, represent the product of Euclidean space with tori and play a crucial role in nonlinear optics, particularly in telecommunications. In modern backbone networks, data signals primarily propagate via optical carriers through fibers, which serve as specialized waveguides. As applications such as the internet demand higher bandwidth and cost-efficient data transmission, optimizing these network infrastructures has become increasingly important. NLS is fundamental in modeling nonlinear effects in optical fibers, which are essential for improving performance and efficiency, where waveguide $\R^m \times \mathbb{T}^n$ govern signal propagation. Our work extends this framework from \textit{NLS} to the \textit{Zakharov system}. In physics, an optical waveguide confines and directs light along a designated path, and the study of solutions on waveguide manifolds is particularly intriguing. These manifolds inherit distinct properties from both Euclidean spaces and tori, providing deeper insights into wave propagation and the underlying physics of optical systems. We refer to the introductions of some recent works \cite{kwak2024critical,kwak2024global,deng2024bilinear} on the topic of NLS on tori/waveguides. See also \cite{sogge2017fourier,tao2006nonlinear,dodson2019defocusing}.

Analysis results in the waveguide setting are also developed very fast in recent decades. The estimates in the waveguide setting are tightly related to the analogues in the Euclidean setting/periodic setting, and have their own features. We refer to \cite{takaoka20012d,Barron14,MR4219972} for the Strichartz estimates in the waveguide setting (see also the references therein). Moreover, we refer to recent works \cite{MR4782142,MR4825203,deng2024bilinear} (where the bilinear-type estimates in the waveguide setting are studied) for more details.

As in \cite{BD15,Barron14}, decoupling-type inequalities can yield Strichartz estimates with derivative loss on tori or waveguide manifold, and examples demonstrate that derivative loss is inevitable for endpoint Strichartz estimates on tori (see \cite{bourgain1993fourier,takaoka20012d}). One of our primary focuses is on Strichartz estimates in waveguide settings. To highlight the distinctions from the periodic case, we employ methods analogous to those in \cite{takaoka20012d,DPST}, utilizing measure-theoretic estimates to establish derivative-independent bounds. Specifically, we introduce a lemma from \cite{basu2021stationarysetmethodestimating} to estimate the measure of \textit{semi-algebraic sets} in $\bR^{d-1}\times \mathbb{Z}$. We believe that this lemma will play a fundamental role in studying $L^2\to L^4$-type 
Strichartz estimates and $L^2\times L^2 \to L^2$-type bilinear Strichartz estimates for dispersive equations on waveguide manifolds. 

It should be noted that recent advances in Strichartz estimates on $\mathbb{T}^2$, Herr-Kwak \cite{Herr}, have been achieved through combinatorial counting techniques and incidence geometry methods, which in a certain sense transcend traditional measure estimates. We anticipate that the integration of these approaches with measure-theoretic tools will yield further analytic breakthroughs.

\subsection{Statement of main results}\label{1.2}
We now present the main analytical results concerning shell-type Strichartz estimates and their applications to the Zakharov system.

We start with analytical results, i.e. the shell-type Strichartz estimates in different geometries. First of all, for the Euclidean case, we demonstrate that the shell-type Strichartz estimates are global-in-time, and there is no derivative loss as below. 
\begin{theorem}[Global Strichartz estimate on the shell for $\mathbb{R}^d$]\label{mainthm on Rd}
Let $d\geq 1$. The estimate
\begin{equation}\label{e:ShellStrichartz in Rd}
\|e^{it \Delta} \phi\|_{L^{p}_{t,x}(\mathbb{R} \times \mathbb{R}^d )} \lesssim  \| \phi\|_{L_x^2(\mathbb{R}^d )}
\end{equation}
holds for $p=\frac{2(d+1)}{d-1}$ ($p=\infty$ when $d=1$) and all  $\phi \in L^2(\mathbb{R}^d)$ satisfying 
\begin{equation}
\label{e:AssumpStri}
{\rm supp}\, \hat{\phi} 
\subset 
\{ \xi \in \mathbb{R}^d : R-1 \le |\xi| \le R + 1\},
\end{equation}
where \(\hat{\phi}\) denotes the Fourier transform of \(\phi\), and \(R \geq 1\) is a scaling parameter.  
\end{theorem}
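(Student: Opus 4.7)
The plan is to use a $TT^*$ argument. Setting $T\phi(t,x) = e^{it\Delta}\phi(x)$ on shell-supported $\phi$, one has $TT^*F = K_R *_{(t,x)} F$ with kernel
\[
K_R(t,x) \,=\, \int_{R-1 \le |\xi| \le R+1} e^{i(x \cdot \xi - t|\xi|^2)}\,d\xi.
\]
First I would analyze $K_R$ via stationary phase in polar coordinates $\xi = r\omega$, $r \in [R-1,R+1]$, $\omega \in S^{d-1}$. The spherical integral contributes a Bessel-type factor of size $(r|x|)^{-(d-1)/2}$ (when $r|x| \gtrsim 1$) times $e^{\pm i r|x|}$, and the remaining radial integral, with phase $\pm r|x| - t r^2$, has stationary points $r = \pm |x|/(2t)$, identifying the ``Schr\"odinger light cone'' $|x| \sim 2|t|R$ as the locus of concentration of $K_R$. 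Stationary phase on the cone and integration by parts off it produce a kernel estimate that mirrors the wave/cone dispersive bound (concentrated on a $(d-1)$-dimensional surface), rather than the full-dimensional $|t|^{-d/2}$ Schr\"odinger decay.

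An equivalent approach that makes the cone structure explicit is the algebraic factorization based on the identity $|\xi|^2 = -R^2 + 2R|\xi| + (|\xi|-R)^2$, namely
\[
e^{it\Delta}\phi \,=\, e^{itR^2}\, e^{-2itR\sqrt{-\Delta}}(W_t \phi), \qquad \widehat{W_t\phi}(\xi) \,=\, e^{-it(|\xi|-R)^2}\,\hat\phi(\xi).
\]
This expresses the shell-restricted Schr\"odinger propagator as a half-wave propagator at rescaled time $-2tR$, composed with a unimodular-multiplier perturbation $W_t$ of $\phi$. Since $(|\xi|-R)^2 \in [0,1]$ on the shell, the multiplier admits a spectral decomposition $W_t = \int_0^1 e^{-it\lambda}\,dE_\lambda$; combining this with Minkowski's inequality and the cone Strichartz estimate for each spectral piece, together with the Jacobian $(2R)^{-1/p}$ from the time rescaling, would yield the $L^p_{t,x}$ bound. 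For $d = 1$ the statement is immediate: the shell has total Lebesgue measure at most $4$, so by Cauchy--Schwarz and Plancherel, $\|e^{it\Delta}\phi\|_{L^\infty_{t,x}} \le \|\hat\phi\|_{L^1} \le 2\|\phi\|_{L^2}$.

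The main obstacle is tracking all $R$-dependent factors so that the final bound is uniform in $R$. In the wave-reduction route, the wave Strichartz estimate applied to shell-supported data carries a factor $R^{1/2}$ (from the $\dot H^{1/2}$ scaling), and the time rescaling contributes $R^{-1/p} = R^{-(d-1)/(2(d+1))}$; these partially cancel, and the residual $R^{1/(d+1)}$ must be removed by exploiting additional orthogonality between the spectral pieces (level sets of $(|\xi|-R)^2$, which are pairs of concentric spheres). In the direct $TT^*$ route, $p = 2(d+1)/(d-1)$ is precisely the borderline exponent at which $\|K_R\|_{L^{p/2}_{t,x}} = +\infty$, so a naive Young's inequality fails; the argument instead requires the endpoint Stein--Tomas / Keel--Tao complex interpolation between the trivial $L^2$-conservation bound and the refined dispersive estimate, with careful bookkeeping of the prefactor $R^{(d-1)/2}$ and the $(d-1)$-dimensional measure of the light cone.
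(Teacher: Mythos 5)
Your proposal does not close; both routes stop exactly where the real difficulty lies, and the missing step is the heart of the theorem. In the wave-reduction route, the cone Strichartz estimate at $p=\frac{2(d+1)}{d-1}$ costs $\dot H^{1/2}$, i.e.\ a factor $R^{1/2}$ on the shell, while the time rescaling $s=2Rt$ only recoups $R^{-1/p}$, leaving the factor $R^{1/(d+1)}$ that you acknowledge but propose to remove by unspecified ``additional orthogonality''. That loss is not a bookkeeping artifact: the $\dot H^{1/2}$ cone estimate is saturated by Knapp caps which are equally admissible on the shell, so no amount of reshuffling of the same wave estimate can delete the factor; what must be exploited is the radial curvature of the phase $r\mapsto tr^2$, which your factorization $e^{it\Delta}=e^{itR^2}e^{-2itR\sqrt{-\Delta}}W_t$ discards into the ``harmless'' multiplier $W_t$. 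Moreover the proposed treatment of $W_t$ by a spectral decomposition $W_t=\int_0^1 e^{-it\lambda}\,dE_\lambda$ plus Minkowski is not valid as written: it produces an $L^1$-type integral in the spectral parameter of $L^2$-norms of infinitesimal pieces, which is not controlled by $\|\phi\|_{L^2}$ (for a continuum of mutually orthogonal pieces this can be arbitrarily larger), and the usual fix of freezing the phase on intervals of length $\delta$ only works for $|t|\lesssim \delta^{-1}$, whereas the claimed estimate is global in time. Similarly, in the $TT^*$ route you correctly note that $K_R\notin L^{p/2}_{t,x}$ at the endpoint and that one would need a Stein--Tomas/Keel--Tao type complex interpolation with uniform-in-$R$ bookkeeping, but none of the required kernel bounds, the analytic family, or the uniformity argument is supplied; so this is a statement of the difficulty rather than a proof.

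For comparison, the paper's proof avoids both issues by foliating the shell into spheres: write $e^{it\Delta}\phi$ in polar coordinates, apply the Stein--Tomas restriction theorem on $S^{d-1}$ for each fixed radius $r\in[R-1,R+1]$ (this uses the full angular curvature and carries the correct power of $r$), and handle the radial direction by the one-dimensional operator $Tg(t)=\int e(tr^2)g(r)\mathbf 1_{[R-1,R+1]}(r)\,dr$, which is bounded from $L^1_r$ to $L^\infty_t$ trivially and from $L^2_r$ to $L^2_t$ with gain $R^{-1/2}$ via Plancherel after the change of variables $s=r^2$; interpolation gives $L^{p'}_r\to L^p_t$ with constant $R^{-1/p}$. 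It is precisely this radial $R^{-1/2}$ Plancherel gain (coming from the curvature $r^2$, stronger than the $R^{-1/p}$ your rescaling yields) that makes all powers of $R$ cancel exactly at $p=\frac{2(d+1)}{d-1}$ after converting $L^2(d\sigma\,dr)$ back to $L^2(r^{d-1}dr\,d\sigma)$. If you want to salvage your approach, you would need to reintroduce this radial curvature gain, e.g.\ by an orthogonality or bilinear argument between the spherical level sets, and that argument is exactly what is missing from the proposal. (Your $d=1$ observation is fine but trivial.)
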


\begin{remark}
We will also compare Theorem \ref{mainthm on Rd} with Proposition \ref{prop:Strichartz2}, which is another restricted estimate of of the same type. Furthermore, we will derive and utilize a stronger version of Proposition \ref{prop:Strichartz2} (with a full range of admissible exponents) to study the well-posedness
theory for supercritical NLS with strip-restricted initial data. We refer to Section \ref{3} and Section \ref{new} for more details.
\end{remark}

As we can see, compared to the periodic case (\cite{MR4665720}), the Euclidean estimates are global-in-time (rather than local-in-time), and there is no derivative loss (rather than there is $\epsilon$-derivative loss). Moreover, this estimate is analogous to the classical Strichartz estimate on $\mathbb{R}^{d-1}$, reflecting the shell's codimension$-1$ structure. When $d=1$, \eqref{e:ShellStrichartz in Rd} corresponds to the trivial estimate for functions with finite Fourier support.

Next, we turn to the waveguide case. The waveguide setting serves as an intermediate regime between the Euclidean and fully periodic geometries, retaining features from both. In particular, on one hand, we prove that the shell-type Strichartz estimate for $\mathbb{R}^2\times \mathbb{T}$ is global-in-time and there is no derivative loss as follows.
\begin{theorem}[Global Strichartz estimate on the shell for $\mathbb{R}^2\times \mathbb{T}$]\label{mainthm on R2T} The estimate \begin{equation}\label{e:ShellStrichartz} 
\|e^{it \Delta} \phi\|_{L^4_{t,x}([0,\infty) \times \mathbb{R}^2 \times \mathbb{T})} \lesssim  \| \phi\|_{L_x^2(\mathbb{R}^2 \times \mathbb{T})}
\end{equation}
holds for all  $\phi \in L^2(\mathbb{R}^2 \times \mathbb{T})$ satisfying 
\begin{equation}\label{e:AssumpStri2}
{\rm supp}\, \hat{\phi} 
\subset 
\{ (\xi, n) \in \mathbb{R}^2\times \Z : c_*-1 \le |(\xi,n)| \le c_* + 1\}
\end{equation}
for some $c_* \sim N \gg 1$, where \( c_* \) satisfies \( |c_* - N| \leq 1 \) for some \( N \gg 1 \).   
\end{theorem}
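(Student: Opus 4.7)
The plan is to reduce the $L^4_{t,x}$ bound to a measure estimate on the Fourier side via the standard $L^4$--$L^2$ orthogonality template, and then prove the required measure bound using the semi-algebraic set lemma of \cite{basu2021stationarysetmethodestimating}.

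\textbf{Step 1 (Plancherel--Cauchy--Schwarz reduction).} Set $u=e^{it\Delta}\phi$. By time-reversal symmetry we may extend the time integration to all of $\mathbb{R}$, so $\|u\|_{L^4_{t,x}}^4 = \|u\bar u\|_{L^2_{t,x}}^2$. Taking the space--time Fourier transform shows that $\widetilde{u\bar u}(\tau,\xi,n)$ is a convolution-type integral of $\hat\phi$ with itself against the delta measure supported on the resonance hyperplane
\[
\tau = 2(\xi_1,n_1)\cdot(\xi,n) - |(\xi,n)|^2,
\]
and, from the support of $\hat\phi$, subject to the two shell constraints $(\xi_1,n_1),(\xi_1-\xi,n_1-n)\in\mathcal S_N$, where $\mathcal S_N=\{|\eta|\in[N-1,N+1]\}$. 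Applying Cauchy--Schwarz reduces the desired $L^4$ bound to a uniform estimate on the measure of the representation set
\[
E(\tau,\xi,n)=\bigl\{(\xi_1,n_1)\in\mathbb{R}^2\times\mathbb{Z}:\text{all three constraints hold}\bigr\},
\]
endowed with the appropriate continuous--discrete measure (product of the $1$-D Lebesgue measure on each hyperplane slice with the counting measure on $\mathbb{Z}$).

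\textbf{Step 2 (measure bound via semi-algebraic sets).} The set $E$ is cut out by finitely many polynomial (in)equalities of bounded degree in $(\xi_1,n_1)$, hence is a bounded-complexity semi-algebraic subset of $\mathbb{R}^2\times\mathbb{Z}$. Its continuous analogue (replacing $n_1\in\mathbb{Z}$ by $n_1\in\mathbb{R}$) is a $2$-dimensional piece of a hyperplane in $\mathbb{R}^3$ cut by two thin shells; transverse intersection of the two $N$-spheres with the hyperplane yields the sharp bound---this is the geometric input behind Theorem \ref{mainthm on Rd} in dimension three. The lemma of \cite{basu2021stationarysetmethodestimating} then transfers this Euclidean bound to the discrete--continuous measure on $\mathbb{R}^2\times\mathbb{Z}$, losing only constants that depend on the complexity of the defining polynomials.

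\textbf{Main obstacle.} The crux is Step 2, and specifically the transfer from the Euclidean picture to the lattice-slice picture. Slice-by-slice estimates are badly lossy: for fixed integer $n_1$, the slice is a segment of an affine line cut by two annuli in $\mathbb{R}^2$ and its length $\ell_{n_1}$ can spike to $O(\sqrt{N})$ when that line is nearly tangent to an annular boundary; since up to $\sim N$ admissible integer heights $n_1$ contribute, a naive slice-by-slice total is as bad as $N^{3/2}$. The semi-algebraic framework is the tool that prevents this accumulation, exploiting the polynomial structure of the constraints to enforce a balance between ``bad'' and ``good'' slices. A secondary subtlety is the low-dispersion regime (e.g.\ $|\xi|$ small, or $|n|$ close to $N$) where the geometric picture degenerates; these cases must be handled by hand or by a direct orthogonality argument in the periodic variable $x_3$. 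It is instructive that the analogous strategy fails on $\mathbb{R}\times\mathbb{T}^2$: with only one continuous direction, transversality is insufficient to produce a uniform measure bound, in line with the paper's counterexample showing that the shell Strichartz estimate does not hold in that setting.
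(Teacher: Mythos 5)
Your high-level strategy (reduce the $L^4$ bound to a Fourier-side measure estimate, then compare lattice sums with Euclidean measure via the semi-algebraic machinery of \cite{basu2021stationarysetmethodestimating}) is the same as the paper's, but the two places where the actual work lies are left as acknowledged obstacles rather than resolved, and your specific formulation makes one of them genuinely problematic. Working globally in time with the delta measure on the resonance hyperplane, the quantity you must control is a sum over $n_1\in\mathbb{Z}$ of one-dimensional slice lengths of a two-dimensional surface piece, weighted by the Jacobian factor coming from integrating out the delta (which degenerates when the continuous output frequency $\xi$ is small, as you note). Lemma \ref{lem:control semialgebraic set} does not apply to this quantity: it compares sums of $d$-dimensional Lebesgue slice measures of a set with its $(d+1)$-dimensional Lebesgue measure, not sums of lengths with a surface area. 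Worse, as you yourself observe, a single slice can have length of order $\sqrt{N}$ near tangency, so any lemma of the form ``sum $\lesssim$ total measure $+$ sup of slices'' would return $\sqrt{N}$ rather than $O(1)$ in your setting; the assertion that ``the semi-algebraic framework prevents this accumulation'' is exactly the statement that needs proof, and none is given. The paper sidesteps this entirely by proving the estimate on $[0,T]$ with constants uniform in $T>1$: the time cutoff replaces the delta by the $1/T$-thickened set $U=\{\widetilde{\xi}\in\mathbb{R}^3: c_*-1\le|\widetilde{\xi}|\le c_*+1,\ |(\widetilde{\xi}-a)\cdot(\widetilde{\xi}-b)|\le 1/T\}$, whose two-dimensional slices at fixed $n$ are $1/T$-neighborhoods of circles and hence have area $O(1/T)$ by a one-line computation; the sup term in Lemma \ref{lem:control semialgebraic set} is then harmless and the lemma applies verbatim, leaving only the bound $|U|\lesssim 1/T$.

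That remaining Euclidean bound is the second gap. You appeal to ``transverse intersection of the two $N$-spheres with the hyperplane'' and point to Theorem \ref{mainthm on Rd} as the geometric input, but that theorem is proved via Stein--Tomas on the sphere and does not furnish the uniform measure bound needed here; moreover transversality genuinely degenerates for some frequency configurations, which is why the paper first splits the shell into $O(1)$ caps and reduces to data with $\operatorname{supp}\hat{\phi}$ in a ball $B_{c_*/100}$, so that the midpoint $\frac{a+b}{2}$ has length $\gtrsim c_*$, and then computes $|U|\lesssim 1/T$ explicitly in adapted coordinates. Your proposal performs neither the cap reduction nor any substitute treatment of the degenerate regimes (near-tangent slices, small $|\xi|$), which are deferred ``by hand''. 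In short, the skeleton matches the paper's proof, but the discrete-to-continuous transfer in your delta-measure formulation and the measure computation itself --- i.e., the substantive content of the argument --- are missing.
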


This derivative-free behavior in the $\mathbb{R}^2 \times \mathbb{T}$ setting indicates that the energy transfer between light waves (Euclidean modes) and acoustic vibrations (periodic modes) is suppressed due to geometric confinement. This result contrasts sharply with the periodic case $\mathbb{T}^3$ in \cite{MR4665720} (and the $\mathbb{R} \times \mathbb{T}^2$ case also, as shown in the next theorem), where derivative loss is unavoidable. The special behavior in the $\mathbb{R}^{2}\times\mathbb{T}$ case stems from the fact that the two Euclidean dimensions provide sufficient dispersion to suppress energy transfer to high frequencies, while the single periodic dimension introduces only weak confinement. This balance between dispersion and confinement is crucial for obtaining derivative-free estimates.

As a comparison, we demonstrate that the shell-type Strichartz estimate for $\mathbb{R} \times \mathbb{T}^d$ must have a derivative loss. (The $\mathbb{R} \times \mathbb{T}^2$ case is included when $d=2$.)
\begin{theorem}\label{counterexample on RTd-1}
Let $d\ge 2$, $p=\frac{2(d+1)}{d-1}$. We construct a counter-example $\phi \in L^2(\mathbb{R} \times \mathbb{T}^{d-1})$ with:
\begin{equation}
{\rm supp}\, \hat{\phi} 
\subset 
\{ (\xi, n) \in \mathbb{R}\times \Z^{d-1} : c_*-1 \le |(\xi,n)| \le c_* + 1\}\bigcap B_{N_2},
\end{equation}
for some $c_* \sim N_1\ge N_2 \gg 1$, such that the estimate on $\bR\times \T^{d-1}$,
\begin{equation}\label{e:ShellStrichartzonRTd-1}
\|e^{it \Delta} \phi\|_{L^p_{t,x}([0,1] \times \mathbb{R} \times \mathbb{T}^{d-1})} \lesssim  \| \phi\|_{L_x^2(\mathbb{R} \times \mathbb{T}^{d-1})}
\end{equation}
fails. Here $B_{N_2 } \subset \R^d$ is a ball of radius $N_2 $ with an arbitrary center. 
\end{theorem}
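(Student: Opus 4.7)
The plan is to reduce the failure of \eqref{e:ShellStrichartzonRTd-1} to the known failure of the shell-type Strichartz on the pure torus $\mathbb{T}^{d-1}$ via a tensor-product construction, and then invoke a Bourgain-style counterexample. I would set
\[
\phi(x_1, y) = \psi(x_1)\,\phi_0(y),
\]
where $\psi \in \mathcal{S}(\mathbb{R})$ is a fixed Schwartz function with $\widehat{\psi}$ supported in $[-1,1]$, and $\phi_0 \in L^2(\mathbb{T}^{d-1})$ has Fourier support on a subset $A \subset \mathbb{Z}^{d-1}$ with $|n| \in [N_1, N_1+1]$ and $\operatorname{diam}(A) \lesssim N_2$. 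Then $\widehat{\phi}$ is supported in $\{|\xi|\le 1\}\times A$, which sits on the shell $\{|(\xi,n)|\in[c_\ast-1, c_\ast+1]\}$ (with $c_\ast \sim N_1$, the $O(1/N_1)$ discrepancy absorbed into the shell thickness) and in the ball $B_{N_2}$ centered near $(0, n_\ast)\in\mathbb{R}\times\mathbb{Z}^{d-1}$.

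Because $e^{it\Delta}$ factors as $e^{it\partial_{x_1}^2}\otimes e^{it\Delta_y}$, Fubini yields
\[
\|e^{it\Delta}\phi\|_{L^p([0,1]\times\mathbb{R}\times\mathbb{T}^{d-1})}^p = \int_0^1 \|e^{it\partial_{x_1}^2}\psi\|_{L^p_{x_1}(\mathbb{R})}^p \, \|e^{it\Delta_y}\phi_0\|_{L^p_y(\mathbb{T}^{d-1})}^p \, dt.
\]
Since $\widehat{\psi}$ is fixed and supported in $[-1,1]$, one has $\|e^{it\partial_{x_1}^2}\psi\|_{L^p_{x_1}}\simeq 1$ uniformly in $t\in[0,1]$ and $\|\phi\|_{L^2}\simeq\|\phi_0\|_{L^2(\mathbb{T}^{d-1})}$. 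Thus \eqref{e:ShellStrichartzonRTd-1} reduces, up to absolute constants, to the pure-torus shell Strichartz
\[
\|e^{it\Delta_y}\phi_0\|_{L^p_{t,y}([0,1]\times\mathbb{T}^{d-1})} \lesssim \|\phi_0\|_{L^2(\mathbb{T}^{d-1})},
\]
and it suffices to produce $\phi_0$ for which the implicit constant blows up with $N_1$.

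For such $\phi_0$, I would use a Bourgain-type construction. For $d \ge 3$, take $A = \{n\in\mathbb{Z}^{d-1} : |n|\in[N_1, N_1+1]\}$ the full thickness-one shell (so $|A|\sim N_1^{d-2}$) and $\phi_0 = \sum_{n\in A} e^{in\cdot y}$. When $p$ is an even integer (as with $p=4$ at $d=3$), expand $\|e^{it\Delta_y}\phi_0\|_{L^p_{t,y}}^p$ via Plancherel into a count of resonant $p$-tuples in $A^p$ satisfying $n_1+\cdots+n_{p/2} = n_{p/2+1}+\cdots+n_p$ and $|n_1|^2+\cdots+|n_{p/2}|^2 = |n_{p/2+1}|^2+\cdots+|n_p|^2$; standard additive-energy bounds on shells in $\mathbb{Z}^{d-1}$ then produce at least a $(\log N_1)^c$ growth in the ratio, contradicting any uniform Strichartz bound. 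When $p$ is non-even (higher $d$), one instead fixes an integer $M\sim N_1^2$ with many representations as a sum of $d-1$ squares and takes $A = \{n: |n|^2 = M\}$; since $|n|^2 = M$ is constant on $A$, the propagation collapses to $e^{it\Delta_y}\phi_0 = e^{-itM}\phi_0$, and the estimate reduces to a concentration argument for $\|\phi_0\|_{L^p}/\|\phi_0\|_{L^2}$ via $|\phi_0|\sim|A|$ on $|y|\lesssim 1/N_1$.

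The main obstacle is the case $d = 2$: the shell in $\mathbb{Z}$ contains only $O(1)$ lattice points, rendering the tensor-product reduction vacuous. One must then construct the counterexample directly on $\mathbb{R}\times\mathbb{T}$, using the full 2D shell structure in $\mathbb{R}\times\mathbb{Z}$ (a smooth curve with $\sim 2N_1$ lattice-point ``spokes'' in the $n$-direction, each carrying a $\xi$-arc) together with a Bourgain-style Farey-time Gauss-sum analysis adapted to $t\in[0,1]$, still yielding at least a $(\log N_1)^c$ loss. A secondary difficulty in all dimensions is the ball constraint when $N_2<N_1$, which forces $A$ to lie in a cap of diameter $N_2$; this is handled by restricting to arithmetic progressions on spherical caps, at the cost of a smaller but still divergent loss.
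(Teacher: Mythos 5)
Your tensor-product reduction (high frequency placed on the shell in $\mathbb{Z}^{d-1}$, low frequency in the $\mathbb{R}$-variable) is the opposite of what the paper does, and as written it has gaps at essentially every dimension. For $d=2$ you concede the reduction is vacuous and only gesture at a direct ``Farey-time Gauss-sum'' construction, so that case is not proved. For $d=3$ the whole argument hinges on the assertion that the thickness-one annulus $A=\{n\in\mathbb{Z}^2:|n|\in[N_1,N_1+1]\}$ has paraboloid-constrained additive energy $\gg (\log N_1)^c\,|A|^2$; this is not a ``standard additive-energy bound''. For lattice points on an exact circle the count is $\lesssim |A|^2$ (two points on a circle are determined, up to swapping, by their sum), and whether the fuzzy annulus gains a divergent factor at $L^4(\mathbb{T}^2)$ is exactly the delicate point you would need to prove; nothing in the literature you can cite as routine gives it. Most seriously, for every $d\ge 4$ the exponent $p=\frac{2(d+1)}{d-1}$ is not an even integer, and your fallback (exact sphere $|n|^2=M$, so that $e^{it\Delta_y}\phi_0=e^{-itM}\phi_0$, plus concentration of $\phi_0$ on $|y|\lesssim N_1^{-1}$) quantitatively fails: the concentration bound gives $\|\phi_0\|_{L^p}/\|\phi_0\|_{L^2}\gtrsim |A|^{1/2}N_1^{-(d-1)/p}$, while $|A|=r_{d-1}(M)\lesssim M^{\frac{d-3}{2}+\epsilon}=N_1^{d-3+\epsilon}$, so the right-hand side is $\lesssim N_1^{\frac{d-3}{2}-\frac{(d-1)^2}{2(d+1)}+\epsilon}=N_1^{-\frac{2}{d+1}+\epsilon}\to 0$. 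The single-sphere (knapsack) example is precisely the one the Stein--Tomas-type exponent is calibrated to accommodate, so no contradiction can come out of it; the remaining hope would be a genuine discrete-restriction lower bound for the sphere at this subcritical exponent, which is not available and is in fact expected to fail. The cap/arithmetic-progression fix for the ball constraint $B_{N_2}$ is likewise only asserted.

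The paper's construction avoids all of this by putting the large frequency in the \emph{continuous} variable and the oscillation in the periodic variables at low frequency: $\hat\phi$ is the indicator of $\{\xi\in(N_1-\tfrac1{100},N_1+\tfrac1{100})\}\times\{1\le n_j\le d^{-1/2}N_2^{1/2}\}$, which lies in the shell (since $|(\xi,n)|=N_1+O(N_2/N_1)$) and in a ball of radius $N_2$. The $L^p$ norm factorizes; the $\mathbb{R}$-factor is bounded below uniformly in $t$ after the Galilean change of variables $s=x_1+2N_1t$, $\eta=\xi-N_1$; and the $(d-1)$ periodic factors are one-dimensional quadratic Weyl sums over $k\le d^{-1/2}N_2^{1/2}$, for which a classical moment lower bound (Theorem 13.6 of \cite{MR3971577}) yields $\|e^{it\Delta}\phi\|_{L^p}^p\gtrsim(\log N_2)\,\|\phi\|_{L^2}^p$. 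This works uniformly for all $d\ge2$, handles the ball constraint automatically, and requires no information about shells in $\mathbb{Z}^{d-1}$. If you want to salvage your route, you would at minimum need a proof of the log-loss for the shell-restricted $L^4$ estimate on $\mathbb{T}^2$ and a completely different mechanism for $d=2$ and $d\ge4$; the simpler move is to transfer the loss into the periodic directions via low-frequency Weyl sums, as the paper does.
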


    Theorem \ref{counterexample on RTd-1} demonstrates the necessity of derivative loss in this setting. The failure occurs only for the sharp exponent $p=\frac{2(d+1)}{d-1}$. Moreover, we prove that the analogous shell-type Strichartz estimates for general waveguide manifolds $\mathbb{R}^m \times \mathbb{T}^n$ ($m,n \geq 1$) can be obtained compared to the periodic case (with $\epsilon$-derivative loss).
\begin{theorem}[Local Strichartz estimate on the shell for $\mathbb{R}^m\times \mathbb{T}^n$]\label{mainthm on RmTn} Let $m, n \ge 1$, $d=m+n$, $N_1, N_2\in 2^\mathbb{N}$ satisfy $N_1 \ge N_2$ and $2\le p \le \frac{2(d+1)}{d-1}$(when $d=1$, $\frac{2(d+1)}{d-1}=\infty$ ). Then the estimate \begin{equation}\label{e:ShellStrichartz for RmTn} 
\|e^{it \Delta} \phi\|_{L^p_{t,x}([-1,1] \times \mathbb{R}^m \times \mathbb{T}^n)} \lesssim_\varepsilon N_2^\varepsilon  \| \phi\|_{L_x^2(\mathbb{R}^m \times \mathbb{T}^n)}
\end{equation}
holds for all  $\phi \in L^2(\mathbb{R}^m \times \mathbb{T}^n)$ satisfying 
\begin{equation}\label{e:AssumpStri for RmTn}
{\rm supp}\, \hat{\phi} 
\subset 
\{ (\xi, n) \in \mathbb{R}^m\times \Z^n : c_*-1 \le |(\xi,n)| \le c_* + 1\}\bigcap B_{N_2}
\end{equation}
for some $c_* \sim N_1$ and $B_{N_2} \subset \R^d$, a ball of radius $N_2$ with an arbitrary center.   
\end{theorem}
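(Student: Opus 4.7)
The plan is first to interpolate: since Plancherel gives $\|e^{it\Delta}\phi\|_{L^2([-1,1]\times\R^m\times\T^n)} \leq \sqrt{2}\,\|\phi\|_{L^2}$ for free, and the full range $2\le p \le \frac{2(d+1)}{d-1}$ is a convex interpolation between $p=2$ and the endpoint, it suffices to establish the endpoint $p = \frac{2(d+1)}{d-1}$.

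The key observation for the endpoint is that on the shell $|(\xi,k)| \in [c_*-1,c_*+1]$ the Schr\"odinger symbol is essentially a wave symbol on a cone. Writing $|\xi|^2+|k|^2 = c_*^2 + 2c_*(|(\xi,k)|-c_*) + O(1)$, up to the harmless modulation $e^{-itc_*^2}$ and $O(|t|)$ errors on $t\in[-1,1]$, the phase $-t(|\xi|^2+|k|^2)$ is equivalent to the wave-type phase $-2tc_*(|(\xi,k)|-c_*)$ associated with the cone $\{(\xi,k,2c_*(|(\xi,k)|-c_*))\}\subset \R^{d+1}$. The sharp $L^p$ Strichartz exponent for this cone is precisely $\frac{2(d+1)}{d-1}$---exactly our target exponent---via $\ell^2$-decoupling in the spirit of Bourgain--Demeter.

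The next step is to execute this decoupling at scale $N_2^{-1/2}$: decompose the shell-ball frequency support into angular caps of size $\sim N_2^{-1/2}$ (there are $\lesssim N_2^{(d-1)/2}$ of them), decouple to produce an $\ell^2$-sum $(\sum_\theta\|e^{it\Delta}\phi_\theta\|_{L^p}^2)^{1/2}$ with an overall $N_2^\varepsilon$ loss, and for each cap reduce to a Bernstein/Galilean-rescaled bounded-frequency problem on $\R^m\times\T^n$ (which is controlled by Theorem \ref{mainthm on Rd} in the Euclidean direction combined with a Bernstein inequality on $\T^n$). The crucial lattice-counting input---\emph{how many $k\in\Z^n$ lie in each cap}---is supplied by the measure-of-semi-algebraic-sets estimate from \cite{basu2021stationarysetmethodestimating}: the caps are semi-algebraic sets of uniformly bounded description complexity, and the Basu--Roy lemma counts their lattice points up to an $N_2^\varepsilon$ loss. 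Assembling everything by $\ell^2$-orthogonality on $\Z^n$ recovers $\|\phi\|_{L^2}$ on the right.

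The main obstacle is uniformity of the semi-algebraic counting as the angular parameter of the cap varies: the defining polynomials have a fixed degree, but their coefficients move continuously, and one must verify that the Basu--Roy constants are uniform across this family (otherwise one would pick up an additional angular loss rather than only the dimensional $N_2^\varepsilon$). A careful reparameterization of the caps as sub-level sets of an explicit, uniformly-degree-bounded family of polynomials should suffice; with this uniformity in hand, the decoupling assembly and the interpolation reduction close the estimate \eqref{e:ShellStrichartz for RmTn}.
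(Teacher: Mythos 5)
There is a genuine gap, and it sits at the heart of your argument: the single-cap estimate at your chosen cap scale. You decouple the shell$\,\cap\, B_{N_2}$ into caps of diameter $\sim N_2^{1/2}$ and then claim each cap piece reduces, after a Galilean shift, to a ``bounded-frequency problem'' handled by Theorem \ref{mainthm on Rd} in the Euclidean variables plus Bernstein on $\mathbb{T}^n$. But on a cap of diameter $N_2^{1/2}$ the phase $|\zeta|^2$ deviates from its affine approximation by $O(N_2)$, not $O(1)$, so over $|t|\le 1$ the cap problem is not bounded-frequency: it is a rescaled copy of the original problem at scale $N_2^{1/2}$, with integer frequencies ranging over an interval of length $\sim N_2^{1/2}$ in the periodic directions (and the cap's ``radial'' direction generically mixes Euclidean and periodic coordinates, so one cannot simply quote the Euclidean result ``in the Euclidean direction''). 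The Bernstein step you invoke costs a fixed positive power $N_2^{\frac{n}{2}(\frac12-\frac1p)}$ per cap, which the $\ell^2$ cap-sum does not recover; so as written the scheme proves the estimate with a polynomial loss in $N_2$, not $N_2^\varepsilon$. To make a decoupling proof close, one must either iterate (induction on scales) or decouple all the way down to \emph{unit} caps, where the cap estimate is lossless (Hausdorff--Young/H\"older, since each unit cube carries $O(1)$ frequency mass) and the $L^2$ norm is recovered by plain Plancherel. Moreover, the loss must be controlled by $N_2$ alone, not $N_1$: a one-step Bourgain--Demeter cone decoupling applied to the whole shell would naturally produce a loss in the large scale $N_1$, and the nontrivial point is precisely a two-scale statement in which the constant depends only on the ball radius $N_2$. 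This is exactly the content of Proposition \ref{Cor:TwoScaleDecoup} (Corollary 3.3 of \cite{MR4665720}), which the paper's proof takes as the key input, combined with the standard transference to $\mathbb{R}^m\times\mathbb{T}^n$ (Dirac measures on the lattice frequencies, periodicity, Minkowski/parallel decoupling, and a weight on $[-1,1]\times[-N_1,N_1]^m$), and the trivial unit-cap bound.

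A second, lesser issue is the role you assign to the semi-algebraic measure lemma of \cite{basu2021stationarysetmethodestimating}. In the paper that lemma is used only for the $\mathbb{R}^2\times\mathbb{T}$ result (Theorem \ref{mainthm on R2T}), via a reduction of the $L^4$ bound to a level-set measure estimate; it plays no part in Theorem \ref{mainthm on RmTn}, and it is a statement comparing lattice-slice sums to Lebesgue measure, not a lattice-point count in caps with $\varepsilon$-losses. In the decoupling route no such counting is needed: after decoupling to unit caps, orthogonality on $\mathbb{R}^m\times\mathbb{Z}^n$ does all the work, so your worry about uniformity of the Basu--Roy constants over the angular family is not the actual crux. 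Finally, the ``shell $\approx$ cone'' reduction via discarding the multiplier $e^{-it\rho^2}$, $\rho=|(\xi,k)|-c_*$, is morally right but is not a constant modulation and would need a genuine argument; the paper avoids this by applying the decoupling directly to the paraboloid restricted over the rescaled shell.
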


\begin{remark}
Since the results of Theorems \ref{mainthm on Rd} and \ref{mainthm on R2T} are free of derivative loss, we need not impose support conditions on the intersection with the ball $B_{N_2}$.
It is interesting to further investigate the globality and the derivative loss issue for the shell-type Strichartz estimates on general waveguide manifolds. See Section \ref{7} for more discussions.
\end{remark}
We note that, it is possible to establish shell-type Strichartz estimates on general waveguide manifolds without a derivative loss, which is of $L^p_tL^q_xL^2_y$-form (Here, \(y\) denotes the periodic variables in \(\mathbb{T}^{n}\) and the tori component is fixed by $L_y^2$-norm). See Hong \cite{hong2017strichartz} (Proposition 1) for a similar treatment which deals with the many body Schr\"odinger equations (where all particles except for one are fixed by $L^2$-norm). See also Tzvetkov-Visciglia \cite{tzvetkov2012small} for a $L^p_tL^q_xL^2_y$-type Strichartz estimate for Schr\"odinger equations in the waveguide setting. 

Based on the established shell-type Strichartz estimates in the waveguide setting (Theorem \ref{mainthm on RmTn} and Theorem \ref{mainthm on R2T}), we establish the well-posedness theory of the partially periodic Zakharov system as follows.
\begin{theorem}[Local well-posedness]\label{mainthm2}
We define the regularity index
\begin{equation}\label{assumption:regularity}
s_0 = 
\begin{cases}
\frac12 \quad & (d=3),\\
\frac34 & (d=4),\\
\frac{d-3}{2} & (d\geq 5).
\end{cases}
\end{equation}
Then we let $s > s_0$. Then~\eqref{Zakharov} is locally well-posed in $H^{s,s-\frac12}(\mathbb{R}^m\times \mathbb{T}^n)$ ($m+n=d,m,n\geq 1$).
\end{theorem}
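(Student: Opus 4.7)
The plan is to run the standard $X^{s,b}$-Bourgain contraction scheme for the Zakharov system on $\mathbb{R}^m\times\mathbb{T}^n$, with Theorems \ref{mainthm on R2T} and \ref{mainthm on RmTn} providing the crucial wave-side Strichartz input. First I would diagonalize the wave equation by setting $V := v + i\abs{\nabla}^{-1}\partial_t v$, so that \eqref{Zakharov} is equivalent to the coupled first-order system
\[
i\partial_t u + \Delta u = u\,\operatorname{Re} V,\qquad i\partial_t V - \abs{\nabla}V = -\abs{\nabla}(\abs{u}^2),
\]
with $V(0)=v_0+i\abs{\nabla}^{-1}v_1\in H^{\ell}$ and $\ell=s-\tfrac12$. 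Working in $X^{s,b}_{\operatorname{Sch}}\times X^{\ell,b}_{\operatorname{W},\pm}$ (the Bourgain spaces associated to the Schr\"odinger symbol $\tau+\abs{\xi}^2$ and the half-wave symbol $\tau\mp\abs{\xi}$), local well-posedness via Duhamel's principle and Picard iteration reduces, for some $b>\tfrac12$ with $b-\tfrac12$ small, to the two multilinear estimates
\[
\norm{u\,V}_{X^{s,b-1}_{\operatorname{Sch}}}\lesssim \norm{u}_{X^{s,b}_{\operatorname{Sch}}}\norm{V}_{X^{\ell,b}_{\operatorname{W},\pm}},\qquad \norm{\,\abs{\nabla}(u_1\bar u_2)}_{X^{\ell,b-1}_{\operatorname{W},\pm}}\lesssim \prod_{j=1,2}\norm{u_j}_{X^{s,b}_{\operatorname{Sch}}}.
\]

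\textbf{Trilinear estimate via the shell.} By duality and Plancherel, both inequalities boil down to a single trilinear form $\bigl|\iint u_1\,\bar u_2\,W\,dx\,dt\bigr|$, which I would decompose dyadically via Littlewood-Paley projectors $u_j=\sum_{N_j}P_{N_j}u_j$ and $W=\sum_L P_L W$. The crucial observation is that $P_L W$ is frequency-localized to the spherical shell $\{\abs{(\xi,n)}\sim L\}$, so the shell-type Strichartz estimates \eqref{e:ShellStrichartz} and \eqref{e:ShellStrichartz for RmTn}, lifted to $X^{0,b}_{\operatorname{W},\pm}$ via the standard transfer principle, give
\[
\norm{P_L W}_{L^p_{t,x}}\lesssim L^{\varepsilon}\norm{P_L W}_{X^{0,b}_{\operatorname{W},\pm}},\qquad p=\tfrac{2(d+1)}{d-1},
\]
with $\varepsilon=0$ when $(m,n)=(2,1)$ and $\varepsilon>0$ otherwise. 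For the Schr\"odinger factor $P_{N_1}u_1\,\overline{P_{N_2}u_2}$ I would apply the $L^p_{t,x}$ and bilinear Strichartz estimates on $\mathbb{R}^m\times\mathbb{T}^n$ from \cite{Barron14,MR4219972,deng2024bilinear}. The analysis then splits according to the size of the Zakharov resonance function
\[
\Omega(\xi_1,\xi_2)=-\abs{\xi_1}^2+\abs{\xi_2}^2\pm\abs{\xi_1-\xi_2},
\]
which forces at least one of the three modulation weights $\langle\tau+\abs{\xi}^2\rangle$, $\langle\tau\mp\abs{\xi}\rangle$ to be large in the non-resonant regime; this large modulation absorbs the derivative loss $\abs{\nabla}^{1/2}$ on $W$, as in the classical Ginibre-Tsutsumi-Velo analysis of \cite{ginibre1997cauchy}.

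\textbf{Exponent balance and main obstacle.} Summing the dyadic trilinear pieces against the weights $N_1^s N_2^{-s} L^\ell$, one sees that the worst case is the high-high-to-low interaction $N_1\sim N_2\gg L$, where the shell Strichartz controls the wave factor and pins down the threshold \eqref{assumption:regularity}. In dimension three, the $L^4_{t,x}$ shell estimate of Theorem \ref{mainthm on R2T} is free of derivative loss and yields exactly $s_0=\tfrac12$; in dimensions $d\ge 4$ the weaker exponent $p=\tfrac{2(d+1)}{d-1}$ on the shell degrades the balance to $s_0=\tfrac34$ for $d=4$ and $s_0=\tfrac{d-3}{2}$ for $d\ge 5$, matching the Euclidean calculation. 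The main obstacle is precisely this high-high-to-low trilinear bound, where the interplay between the waveguide shell Strichartz, the Schr\"odinger (bilinear) Strichartz, and the three-modulation dichotomy must be balanced tightly; the $N_2^\varepsilon$-loss in Theorem \ref{mainthm on RmTn} is then absorbed by taking $s$ strictly above $s_0$, and a standard time-localization $T^\delta$-trick closes the contraction in $X^{s,b}([-T,T])\times X^{\ell,b}_\pm([-T,T])$.
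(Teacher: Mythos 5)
Your overall scheme (first-order reduction, $X^{s,b}$ spaces for the Schr\"odinger and half-wave symbols, Duhamel plus contraction, duality to a trilinear form, dyadic decomposition and a resonance/modulation dichotomy) is the same skeleton as the paper's Section \ref{6}, which reduces everything to the trilinear bound \eqref{est:prop1.3-3} and then quotes the periodic analysis of \cite{MR4665720}. However, there is a genuine gap in the step you call ``Trilinear estimate via the shell'': you apply the shell-type Strichartz estimates \eqref{e:ShellStrichartz}, \eqref{e:ShellStrichartz for RmTn} to the \emph{wave} factor $P_L W$, claiming it is ``frequency-localized to the spherical shell $\{\abs{(\xi,n)}\sim L\}$'' and then ``lifting'' the estimate to $X^{0,b}_{W,\pm}$. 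This fails on two counts. First, a Littlewood--Paley piece lives on a dyadic annulus of width $\sim L$, not on the unit-thickness shell $c_*-1\le\abs{(\xi,n)}\le c_*+1$ that Theorems \ref{mainthm on R2T} and \ref{mainthm on RmTn} require; on a full dyadic annulus these estimates give nothing beyond ordinary Strichartz. Second, those theorems concern the Schr\"odinger propagator $e^{it\Delta}$, and the transference principle (Lemma \ref{lemma:TransferencePrinciple}) only converts a linear bound for a propagator into a bound on the Bourgain space adapted to \emph{that same} propagator; there is no valid transfer of a Schr\"odinger shell estimate into $X^{0,b}_{W,\pm}$. In the paper the wave factor is instead controlled by the wave Strichartz estimates of Corollary \ref{corollary:StrichartzW} (finite speed of propagation plus the Euclidean estimates), while the shell-type input enters on the \emph{Schr\"odinger} factors: in the high-high-to-low regime $N_3\ll N_1\sim N_2$ the resonance identity $(\tau_1+\abs{\xi_1}^2)-(\tau_2+\abs{\xi_2}^2)-(\tau_3\pm\langle\xi_3\rangle)=\abs{\xi_1}^2-\abs{\xi_2}^2\mp\langle\xi_3\rangle$ pinches $\abs{\xi_1},\abs{\xi_2}$ into shells of thickness $O(1)$ intersected with balls of radius comparable to the low frequency, and it is exactly Corollary \ref{corollary:StrichartzShell} (the $X^{s,b}$-transferred version of the shell estimates, with the $B_{N_2}$ restriction) applied to $u_1,u_2$ that produces the factor $N_{\min}^{s-\frac12}$ with only an $N_{\min}^{\varepsilon}$ loss, pinning down the threshold \eqref{assumption:regularity}.

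Two smaller points: the paper diagonalizes with $\langle\nabla\rangle^{-1}$ rather than $\abs{\nabla}^{-1}$, precisely to avoid the low-frequency singularity (this is why the extra term $-\langle\nabla\rangle^{-1}\bigl(\tfrac{w+\overline w}{2}\bigr)$ appears in \eqref{Zakharov2}); and your claimed exponent bookkeeping (``the shell Strichartz controls the wave factor and ... yields $s_0$'') cannot be checked as written because the mechanism is misattributed, so the derivation of the values $\tfrac12,\tfrac34,\tfrac{d-3}{2}$ is not actually justified by your argument. To repair the proposal, redirect the shell estimates to the Schr\"odinger inputs after the resonance-based shell decomposition, keep Corollary \ref{corollary:StrichartzW} for the wave factor, and then the case analysis $N_3\lesssim N_1\sim N_2$ versus $\min(N_1,N_2)\ll N_3$ closes as in \cite{MR4665720}.
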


\begin{remark}
This result recovers, in the waveguide context, the well-posedness theory previously established in the periodic setting. We refer to Theorem 1.2 of \cite{MR4665720} for more details. Since the proof is standard for now, we will only present a brief sketch of the proof in Section \ref{6} by collecting some main estimates.\footnote{The local well-posedness in \( H^{s,s-\frac{1}{2}} \) ensures stable pulse propagation in waveguides for finite times, provided initial fluctuations are sufficiently smooth. } It is interesting to investigate the Cauchy problem for \eqref{Zakharov} deeper and we will present more remarks in Section \ref{7}.   
\end{remark}

Before presenting the proofs of the above main results in the following sections, we now briefly discuss the main strategy of proving our main theorems as follows. 

For the \textit{analysis} part, to prove the Strichartz estimate on the shell for $\bR^d$, we rely on the spherical coordinate transformation and the Stein-Tomas restriction estimate on the sphere.  For the case of $\bR^2 \times \T$, following the approach in \cite{DPST}, we first reduce the required estimate to proving a measure estimate, and then we employ tools from \textit{real algebraic geometry} introduced in \cite{basu2021stationarysetmethodestimating} to provide the proof. Finally, for general waveguide manifolds $\bR^m \times \T^n$, the derivation of Strichartz estimates with an $\varepsilon$-loss from decoupling inequalities constitutes a standard procedure from \cite{BD15,Bourgain13,Barron14}. We also discuss some counter-examples to illustrate the derivative loss issue. In principle, we note that one can compare the shell-type Strichartz estimate on $\bR^m \times \T^n$ with the standard Strichartz estimate on $\bR^{m-1} \times \T^n$\footnote{We refer to \cite{keel1998endpoint,BD15,Barron14} for Strichartz estimates for Schr\"odinger equations in Euclidean setting, periodic setting and waveguide setting, respectively.}. For standard Strichartz estimate on $ \T^n$, derivative loss is expected.  See Section \ref{3}, Section \ref{4}, Section \ref{5} and Section \ref{new} for more details.

For the \textit{PDE} part, the strategy mainly follows from the recent work \cite{MR4665720}, which deals with the well-posedness of the Zakharov system on tori, with few natural modifications. First, we need to establish new estimates in the waveguide setting (such as shell-type Strichartz estimates and Strichartz estimates for wave equations), which is done in the analysis part. Based on the new estimates, Fourier-restriction-type estimate involving
the paraboloid and cone can be obtained. Then the well-posedness results can be obtained via the standard contraction
mapping method as in \cite{MR4665720}. (See Section \ref{6} for more details.) 

Besides the shell-type estimates, another type of restricted estimates (strip-type) and the applications are also investigated. Specifically, we prove \textit{strip-restricted}\footnote{Strip-type restriction can be naturally compared with shell-type restriction.} Strichartz estimates with full Strichartz range, which leads well-posedness results for \textit{supercritical NLS} with strip-restricted initial data, which is of independent mathematical interests. We establish strip-type Strichartz estimates with larger range than the shell-type analogue. We refer to Section \ref{new} for more details. These results can be compared to the well-known \textit{random data well-posedness theory} for NLS. To some extent, the role of \textit{strip-restriction} can be compared to the \textit{randomization} process, which makes the study of supercritical NLS possible. We refer to recent progress 
\cite{deng2022random,deng2024invariant,bringmann2024invariant} and the references therein for more details.

The study of the Zakharov system in the waveguide setting \eqref{Zakharov} is motivated by at least two significant factors. Analytically, the shell-type Strichartz estimates associated with this system are of independent interest, and a comparative analysis of these estimates across different configurations offers valuable insights. It is interesting to investigate restricted-type Strichartz estimates. From the perspective of PDEs, there is a broader goal to extend the theoretical framework established for NLS on waveguide manifolds to other nonlinear dispersive equations/systems. The Zakharov system stands out as a natural candidate for such an extension, given its fundamental importance as a dispersive model in mathematical physics.

To the best knowledge of the authors, our results establish, for the first time, a local well-posedness theory for the Zakharov system on waveguide manifolds. This extends the waveguide framework from NLS to the Zakharov system. The derived shell-type Strichartz estimates reveal a fundamental dichotomy: the geometry $\mathbb{R}^2 \times \mathbb{T}$ admits global-in-time estimates without derivative loss, while $\mathbb{R} \times \mathbb{T}^2$ necessitates a loss. Other dimensional analogues, both in the
waveguide setting and in the Euclidean case, are also derived. Moreover, in the Euclidean setting, we establish, for the first time, well-posedness theory for supercritical NLS with strip-restricted data.

Our theoretical findings are supported by systematic numerical experiments in Section \ref{9}. The simulations not only validate the sharpness of derivative loss estimates but also reveal an exact scaling hierarchy: $C(N)\sim 1$ for $\mathbb{R}^2\times\mathbb{T}$, $N^{0.18}$ for $\mathbb{R}\times\mathbb{T}^2$, and $N^{0.30}$ for $\mathbb{T}^3$. This provides empirical evidence for the ``dimension deficit" principle governing derivative loss phenomena.

\subsection{Organization of the rest of this paper}
In Section \ref{2}, we discuss the preliminaries; in Section \ref{3}, we prove shell-type Strichartz estimates in the Euclidean setting (proving Theorem \ref{mainthm on Rd}); in Section \ref{4}, we prove shell-type Strichartz estimates for three-dimensional product spaces and discuss the necessity of derivative loss by constructing counter-examples (proving Theorem \ref{mainthm on R2T} and Theorem \ref{counterexample on RTd-1}); in Section \ref{5}, we prove shell-type Strichartz estimates for other dimensional cases (proving Theorem \ref{mainthm on RmTn}); in Section \ref{6}, we discuss the PDE applications for the partially periodic Zakharov system (proving Theorem \ref{mainthm2}) based on the established estimates; in Section \ref{new}, we prove well-posedness results of supercritical NLS with strip-restricted data, which have their own interests; in Section \ref{7}, we make a summary of the results we obtained and present a few further remarks; in Section \ref{9}, we utilize numerical verifications to discuss shell-type Strichartz estimates on waveguide manifolds, where the results are consistent with their theoretic analogues obtained in this paper and \cite{MR4665720}.
\subsection{Notations}

\begin{itemize}
    \item \textbf{Spaces}: 
    \begin{itemize}
        \item $\mathbb{R}^m \times \mathbb{T}^n$ denotes the waveguide manifold, where $\mathbb{T}^n = \mathbb{R}^n/(2\pi\mathbb{Z})^n$.
        \item $B_r \subset \mathbb{R}^d$ represents a ball of radius $r>0$ centered at any point.
    \end{itemize}

    \item \textbf{Operators}:
    \begin{itemize}
        \item $\Delta = \partial_{x_1}^2 + \cdots + \partial_{x_d}^2$ is the Laplacian ($d = m + n$ where $m,n \geq 1$).
        \item $\langle \nabla \rangle = (1 - \Delta)^{1/2}$ denotes the Bessel potential.
        \item $e^{it\Delta}$ and $e^{\pm it\langle \nabla \rangle}$ are the solution operators for Schrödinger and wave equations, respectively.
    \end{itemize}

    \item \textbf{Function Spaces}:
    \begin{itemize}
        \item $X^{s,b}_S$ and $X^{s,b}_{W_{\pm}}$ are Bourgain-type spaces for Schrödinger ($S$) and wave ($W_{\pm}$) components, defined explicitly in Definition 2.1.
        \item $H^s(\mathbb{R}^m \times \mathbb{T}^n)$: Sobolev space with norm $\|f\|_{H^s} = \|\langle \nabla \rangle^s f\|_{L^2}$. We use usual $L^{p}$ Lebesgue spaces.
    \end{itemize}

    \item \textbf{Fourier Analysis}:
    \begin{itemize}
        \item $\hat{f}(\xi,n) = \mathcal{F}_{x,y}f$: Fourier transform on $\mathbb{R}^m \times \mathbb{T}^n$.
        \item $\widetilde{u}(\tau,k) = \mathcal{F}_{t,x}u$: Space-time Fourier transform.
    \end{itemize}

    \item \textbf{Parameters}:
    \begin{itemize}
        \item $c_* \sim N_1$ means $c_* \in [N_1 - 1, N_1 + 1]$ for some $N_1 \gg 1$.
        \item $A \lesssim B$ indicates $A \leq CB$ where $C$ may depend on dimension $d$ or $\epsilon$.
        \item $A \sim B$ iff $A \lesssim B$ and $B \lesssim A$.
    \end{itemize}

    \item \textbf{Key Assumptions}:
    \begin{itemize}
        \item Initial data $(u_0, v_0, v_1) \in H^s \times H^\ell \times H^{\ell-1}$ (see \eqref{Zakharov}).
        \item \textbf{Shell-restriction}: $\text{supp}\,\hat{\phi} \subset \{\xi : c_* - 1 \leq |\xi| \leq c_* + 1\}$ for some $c_* \sim N_1$.
        \item \textbf{Strip-restriction}: $\text{supp}\,\hat{\phi} \subset \{ \xi \in \mathbb{R}^d : |a\cdot \xi|\le 1\}$ for some $a\in \mathbb{R}^d, |a|=1$.
    \end{itemize}
  \end{itemize}
  
 Moreover, we use $e(z)$ to denote $e^{2\pi i z}$ for convenience.

\subsection*{Acknowledgment} We appreciate Dr. Z. Chen for some helpful discussions on the numerical verifications. H. Wang was supported by the BIT Research and Innovation Promoting Project (Grant No. 2024YCXY054); Y. Wang was supported by the EPSRC New Investigator Award (Grant No. EP/V003178/1); Z. Zhao was supported by the NSF grant of China (No. 12271032, 12426205) and the Beijing Institute of Technology Research Fund Program for Young Scholars.

\section{Functional Framework and Preliminaries}\label{2}
In this section, we introduce the function spaces and collect some fundamental estimates in the waveguide setting to make the article self-contained. This part is the waveguide version of Section 2 in \cite{MR4665720}, thus we will make it concise by omitting the proofs. 

First, for convenience, we rewrite the original Zakharov system as a first-order system as follows, which is a standard reduction in the study of the Zakharov system (this reduction allows one to use the contraction mapping method in an easier way). See \cite{MR4665720,bejenaru20092d,ginibre1995generalized,kishimoto2013local}. 

Let $w = v + i \langle \nabla \rangle^{-1} \partial_t v$ and $w_0 = v_0 + i \langle \nabla \rangle^{-1} v_1$. 
Then, we may rewrite the system \eqref{Zakharov} as
\begin{equation}\label{Zakharov2}
\begin{cases}
i \partial_t u  + \Delta u = \frac12 (w + \overline{w})u,\\
i \partial_t w - \langle \nabla \rangle w = - \langle \nabla \rangle^{-1} \Delta (|u|^2) - \langle \nabla \rangle^{-1}\bigl( \frac{w + \overline{w}}{2} \bigr),\\
(u(0), w(0)) = (u_0,w_0) \in H^s(\mathbb{R}^m\times \mathbb{T}^n) \times H^{\ell}(\mathbb{R}^m\times \mathbb{T}^n).
\end{cases}
\end{equation}
We note that the local well-posedness of \eqref{Zakharov2} in $H^s(\mathbb{R}^m\times \mathbb{T}^n) \times H^{\ell}(\mathbb{R}^m\times \mathbb{T}^n)$ implies that of \eqref{Zakharov} in $\mathcal{H}^{s,\ell}(\mathbb{R}^m\times \mathbb{T}^n)$. 
Thus, it suffices to investigate system \eqref{Zakharov2} instead of system \eqref{Zakharov}.

\begin{definition}
Let $\eta :\R \to \R$ be a smooth function satisfying $\eta=1$ on $[-1,1]$ and $\textmd{supp } \eta \subset (-2,2)$. Let $N \in 2^{\N_0}$ with $\N_0 = \N \cup \{0\}$. Define
\[
\eta_1 = \eta, \quad \eta_{N}(r) = \eta \Bigl( \frac{r}{N} \Bigr) - \eta \Bigl( \frac{2r}{N}\Bigr) \quad (N \geq 2).
\]
$\{P_N\}_{N \in 2^{\N_0}}$ denotes the collection of standard Littlewood--Paley operators defined by $P_N = \mathcal{F}_{k}^{-1} \eta_{N}(|k|) \mathcal{F}_x$. Here, \(\mathcal{F}_x\) and \(\mathcal{F}^{-1}_x\) denote the Fourier transform and its inverse on \(\mathbb{R}^m \times \mathbb{T}^n\). 

Let $\widetilde{u} (\tau,k)= \mathcal{F}_{t,x} u (\tau,k)$, $L \in 2^{\N_0}$ and
\[
Q_L^S u =  \mathcal{F}_{t,x}^{-1} \big( \eta_{L}(\tau + |k|^2) \widetilde{u} \big), \qquad Q_L^{W_{\pm}} u = \mathcal{F}_{t,x}^{-1}  \big( \eta_{L}(\tau\pm \langle k \rangle) \widetilde{u}\big).
\]
We write $P_{N,L}^S = P_{N} Q_L^S$ and $P_{N,L}^{W_{\pm}} = P_N Q_{L}^{W_{\pm}}$.

We define the function spaces $X_S^{s,b}$ and $X_{W_{\pm}}^{s,b}$ as follows:
\begin{align*}
& X_S^{s,b} = \bigl\{ u \in \mathcal{S}'(\R \times \mathbb{R}^m\times \mathbb{T}^n) \, : \,  \|u\|_{X_S^{s,b}} = \bigl( \sum_{N,L} L^{2b} N^{2s} \| P_{N,L}^S u \|_{L_{t,x}^2}^2 \bigr)^{\frac12} < \infty\bigr\},\\
& X_{W_{\pm}}^{s,b} = \bigl\{ u \in \mathcal{S}'(\R \times \mathbb{R}^m\times \mathbb{T}^n) \, : \,  \|u\|_{X_{W_{\pm}}^{s,b}} = \bigl( \sum_{N,L} L^{2b} N^{2s} \| P_{N,L}^{W_{\pm}} u \|_{L_{t,x}^2}^2 \bigr)^{\frac12} < \infty \bigr\}.
\end{align*}
Let $T>0$ and $X$ be either $X_S^{s,b}$ or $X_{W_{\pm}}^{s,b}$. We define the time restricted space $X(T)$ as follows:
\begin{align*}
X(T) & = \{ u \in C([0,T);H^s(\mathbb{R}^m\times \mathbb{T}^n)) \, : \,  \|u\|_{X(T)} < \infty \},\\
\|u\|_{X(T)} & = \inf  \{\|U\|_{X} \, : \, U \in X, \ \  u(t) = U(t) \ \forall  t \in (0,T) \}.
\end{align*}
Notice that, in the case $b > \frac12$, the Sobolev embedding in time implies $\|u\|_{L^{\infty}_t H^s} \lesssim \|u\|_{X^{s,b}_S}$. Thus, if $b >\frac12$, $X^{s,b}_S(T)$ is a Banach space. The same holds for $X^{s,b}_{W_{\pm}}(T)$. 
\end{definition}
We briefly include the well-known properties of $X^{s,b}$-type spaces, which will be used for the proof of the well-posedness theory. Since they are standard, we omit the proof. We refer to Tao \cite{tao2006nonlinear}.
\begin{lemma}[Transference principle]\label{lemma:TransferencePrinciple}
Let $U(t) \in \{e^{it\Delta}, e^{\mp it \langle \nabla \rangle}\}$. 
We use the notations
\[
X_{U}^{s,b}= \begin{cases} 
X_S^{s,b} & \mathrm{if} \ \ U(t) = e^{it\Delta},\\
X_{W_{\pm}}^{s,b} & \mathrm{if} \ \ U(t) = e^{\mp it \langle \nabla \rangle},
\end{cases}
\qquad Q_L^{U}  = \begin{cases} 
Q_L^{S}& \mathrm{if} \ \ U(t) = e^{it\Delta},\\
Q_L^{W_{\pm}} & \mathrm{if} \ \ U(t) = e^{\mp it \langle \nabla \rangle}.
\end{cases}
\]
Let $N \in 2^{\N_0}$ and $\phi \in L^2(\mathbb{R}^m\times \mathbb{T}^n)$. 
Assume that there exist $q$, $r \in [2,\infty]$, and $\alpha \in \R$ such that the linear estimate,
\begin{equation}\label{ass:lemma1.4-Strichartz}
\|U(t) P_N \phi \|_{L_{[-1,1]}^q L_x^r} \lesssim N^{\alpha} \|P_N \phi\|_{L_x^2}.
\end{equation}
holds. Then, for $L \in  2^{\N_0}$, we have
\begin{equation}\label{est:lemma1.4-goal}
\|  Q_{L}^{U} P_N u \|_{L_{t}^q L_x^r} \lesssim  L^{\frac12} N^{\alpha} \|Q_{L}^{U} P_N u\|_{L_{t,x}^2},
\end{equation}
where $L^q_t$ denotes $L^q_t(\mathbb{R})$.
\end{lemma}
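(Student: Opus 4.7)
The plan is to realise $Q_L^{U} P_N u$ as a continuous superposition of modulated free evolutions indexed by the modulation variable $\sigma$, apply the hypothesized linear Strichartz estimate to each slice, and recover the $L^{1/2}$ factor from a Cauchy--Schwarz together with Plancherel over the $O(L)$-wide modulation support. This is the classical modulation-decomposition argument behind the transference principle for Bourgain $X^{s,b}$-spaces (cf.\ Tao's monograph).

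Concretely, set $v := Q_L^{U} P_N u$ and let $\Phi(k) := |k|^2$ in the Schr\"odinger case and $\Phi(k) := \pm\langle k\rangle$ in the wave case, so that the space-time Fourier transform $\widetilde v(\tau,k)$ is supported in the set $\{(\tau,k)\in\R\times(\R^m\times\Z^n): |\tau+\Phi(k)|\sim L,\ |k|\sim N\}$. At each fixed $k$, change variables $\tau\mapsto \sigma:=\tau+\Phi(k)$ in the Fourier inversion formula to obtain the representation
\begin{equation*}
v(t,x) \;=\; \int_{|\sigma|\lesssim L} e^{it\sigma}\,\bigl(U(t) f_\sigma\bigr)(x)\, d\sigma,
\end{equation*}
where $f_\sigma$ is defined through $(\mathcal{F}_x f_\sigma)(k) := \widetilde v(\sigma-\Phi(k),\,k)$. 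Note that $f_\sigma$ is automatically spatially frequency-localized at $|k|\sim N$.

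Now I apply Minkowski's inequality in $\sigma$ to bring the $L^q_t L^r_x$-norm inside the integral. Since $|e^{it\sigma}|\equiv 1$, invoking the hypothesis \eqref{ass:lemma1.4-Strichartz} on each $f_\sigma$ gives
\begin{equation*}
\|v\|_{L^q_t L^r_x} \;\le\; \int_{|\sigma|\lesssim L} \|U(t) f_\sigma\|_{L^q_t L^r_x}\,d\sigma \;\lesssim\; N^\alpha \int_{|\sigma|\lesssim L} \|f_\sigma\|_{L^2_x}\, d\sigma.
\end{equation*}
Then Cauchy--Schwarz in $\sigma$ against the indicator of a set of measure $\lesssim L$, followed by Plancherel first in $x$ and then in the inverse change of variables $\sigma\mapsto\tau=\sigma-\Phi(k)$, yields
\begin{equation*}
\int_{|\sigma|\lesssim L} \|f_\sigma\|_{L^2_x}\, d\sigma \;\lesssim\; L^{1/2}\Bigl(\int \|f_\sigma\|_{L^2_x}^2\, d\sigma\Bigr)^{\!1/2} \;=\; L^{1/2}\,\|v\|_{L^2_{t,x}},
\end{equation*}
and combining the last two displays gives exactly \eqref{est:lemma1.4-goal}.

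The one point that deserves care — and which I view as the only non-routine step — is the mismatch between the $[-1,1]$ time interval in \eqref{ass:lemma1.4-Strichartz} and the global $L^q_t(\R)$ norm in \eqref{est:lemma1.4-goal}. The standard remedy is to exploit the time-translation invariance and $L^2_x$-unitarity of $U(t)$ so that the hypothesis holds with the same constant on every unit interval $[j,j+1]$, $j\in\Z$, and then to reassemble via a smooth partition of unity in time inserted into the modulation decomposition above, summing the resulting unit-interval pieces in $\ell^q_j$. Apart from this bookkeeping, the remainder is Fourier inversion, Minkowski, Cauchy--Schwarz and Plancherel, so I anticipate no further obstacle.
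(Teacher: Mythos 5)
First, note that the paper does not actually prove Lemma \ref{lemma:TransferencePrinciple}: it is stated as a standard property of $X^{s,b}$-spaces with a citation to Tao's monograph. So the benchmark is the standard transference argument, and the core of your proposal is exactly that argument: the foliation $v=\int_{|\sigma|\lesssim L}e^{it\sigma}U(t)f_\sigma\,d\sigma$ with $\widehat{f_\sigma}(k)=\widetilde v(\sigma-\Phi(k),k)$, Minkowski, the linear estimate applied to each slice (which is legitimate since $f_\sigma$ inherits the frequency localization $|k|\sim N$), and Cauchy--Schwarz plus Plancherel over the modulation interval of length $O(L)$. That part is correct and is the proof the paper has in mind.

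The weak point is precisely the step you yourself single out, and your sketch of it contains a real pitfall. If ``inserting a partition of unity into the modulation decomposition'' means keeping the global profiles $f_\sigma$, applying Minkowski and the translated unit-interval estimate on each $[j,j+1]$, and then summing in $\ell^q_j$, the argument fails: every unit interval receives the same bound $L^{1/2}N^\alpha\|v\|_{L^2_{t,x}(\R\times\R^m\times\T^n)}$, which does not decay in $j$, so the $\ell^q_j$-sum over infinitely many intervals diverges. To make the reassembly work one must first localize $v$ itself in time, $v=\sum_j\chi_j v$, run the modulation decomposition on each piece $\chi_j v$ so that the right-hand side becomes the \emph{local} quantity $\|\chi_j v\|_{L^2_{t,x}}$, and then sum using the bounded overlap of the cutoffs together with $\ell^2_j\hookrightarrow\ell^q_j$, which is where the hypothesis $q\ge 2$ enters (your write-up never invokes it). This in turn forces you to confront the fact that time truncation destroys the compact modulation support of $v$: either choose cutoffs $\chi$ with $\widehat{\chi}$ compactly supported (so $\chi_j v$ has modulation $\lesssim L$ exactly, at the price of non-compact time support in the overlap bookkeeping), or keep compactly supported $\chi_j$ and control the rapidly decaying high-modulation tails, taking care that the tail bounds remain square-summable in $j$. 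None of this is deep, but it is the substance of the ``only non-routine step'' you flag, and as written your remedy would not survive being carried out in the naive order; filling it in along the lines above completes the proof.
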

We also have the following corollary.
\begin{corollary}\label{corollary:StrichartzS}
Let $N$, $L \in 2^{\N_0}$. Assume that $q, \, p \in [2, \infty]$ satisfy
\[
\frac{2(d+2)}{d} \leq q \leq \infty, \qquad \frac{1}{q} = \frac{d}{2} \Bigl(\frac12 - \frac1p \Bigr).
\]
Then, we have
\begin{equation}\label{est:StrichartzS}
\| P_{N,L}^S u\|_{L_t^q L_x^p} \lesssim_\varepsilon L^{\frac12}N^{\varepsilon}\|P_{N,L}^S u\|_{L_{t,x}^2},
\end{equation}
for all $\varepsilon>0$. 
\end{corollary}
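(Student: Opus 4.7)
The strategy is the standard two-step procedure: first establish a linear Strichartz estimate for $e^{it\Delta}$ on frequency-localized data on the waveguide $\mathbb{R}^m \times \mathbb{T}^n$, and then invoke the transference principle of Lemma \ref{lemma:TransferencePrinciple} to upgrade it to the Bourgain-space form \eqref{est:StrichartzS}.

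First, I would record the diagonal endpoint case $q = p = \frac{2(d+2)}{d}$. On the waveguide $\mathbb{R}^m \times \mathbb{T}^n$, the $\ell^2$-decoupling theorem of Bourgain--Demeter, adapted to the partially periodic setting (see \cite{BD15,Barron14}), yields
\[
\|e^{it\Delta}P_N\phi\|_{L^{\frac{2(d+2)}{d}}_{[-1,1]}L^{\frac{2(d+2)}{d}}_{x}(\mathbb{R}^m\times \mathbb{T}^n)} \lesssim_\varepsilon N^{\varepsilon}\|P_N\phi\|_{L^2_x}
\]
for every $\varepsilon>0$. At the trivial endpoint $q=\infty$, $p=2$, the energy identity gives $\|e^{it\Delta}P_N\phi\|_{L^\infty_tL^2_x} = \|P_N\phi\|_{L^2_x}$ with no derivative loss.

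Next, I would interpolate between these two endpoints along the admissible scaling line $\frac{1}{q}=\frac{d}{2}\bigl(\frac{1}{2}-\frac{1}{p}\bigr)$ using Riesz--Thorin. Every pair in the stated range $\frac{2(d+2)}{d}\le q\le\infty$ arises as a convex combination of the two endpoints (with $p$ determined by the scaling relation), so the interpolation produces
\[
\|e^{it\Delta}P_N\phi\|_{L^q_{[-1,1]}L^p_x} \lesssim_\varepsilon N^{\varepsilon}\|P_N\phi\|_{L^2_x}.
\]
This is precisely the hypothesis \eqref{ass:lemma1.4-Strichartz} of the transference principle with $\alpha=\varepsilon$ and $U(t)=e^{it\Delta}$.

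Finally, applying Lemma \ref{lemma:TransferencePrinciple} delivers
\[
\|Q_L^S P_N u\|_{L^q_tL^p_x}\lesssim_\varepsilon L^{1/2}N^{\varepsilon}\|Q_L^S P_N u\|_{L^2_{t,x}},
\]
which is exactly \eqref{est:StrichartzS}. I do not foresee a substantive obstacle here; the result is essentially a packaging of Bourgain--Demeter decoupling on waveguides together with the transference identity. The only mild care needed is to verify that the constants from decoupling and interpolation are independent of $N$ and $L$, which follows from the $N$-independent nature of the decoupling constants at the non-endpoint and the scale-invariance of Riesz--Thorin.
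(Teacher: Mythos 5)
Your argument is correct and is essentially the route the paper intends (it omits the proof as standard): the decoupling-based $N^{\varepsilon}$-loss Strichartz bound on $\mathbb{R}^m\times\mathbb{T}^n$ at the diagonal exponent, interpolated with the trivial $L^\infty_t L^2_x$ bound along the admissible line, and then fed into the transference principle of Lemma \ref{lemma:TransferencePrinciple} with $\alpha=\varepsilon$. No substantive difference from the paper's approach.
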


Next we recall the Strichartz estimates for the wave equation on $ \R^{d}$. We refer to \cite{ginibre1995generalized,keel1998endpoint}.
\begin{theorem}[Strichartz estimate for wave equations]\label{theorem:WaveStrichartzR^d}
Let $d \geq 2$ and assume that $q, \, p \in [2, \infty]$ satisfy
\begin{equation}\label{ass:WaveAdmissible}
\frac1q = \frac{d-1}{2} \Bigl(\frac12 - \frac1p \Bigr), \quad  (q,p,d) \not=(2,\infty,3).
\end{equation}
Then, we have

\begin{equation}\label{est:WaveStrichartzR^d}
\|  \mathcal{W} (t)( f, g) \|_{L_{t}^q L_{x}^p(\R \times \R^d)} \\
\lesssim  \| f\|_{\dot{H}^{\frac{d}{2}- \frac{d}{p}-\frac1q}(\R^d)} +\|  g\|_{\dot{H}^{\frac{d}{2}- \frac{d}{p} - \frac1q -1}(\R^d)},
\end{equation}
where 
\[
\mathcal{W} (t) (f,g) := \cos(t |\nabla|) f + \frac{\sin(t|\nabla|)}{|\nabla|} g.
\]

\end{theorem}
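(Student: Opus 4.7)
The plan is to prove this classical Strichartz estimate by the standard $TT^*$ plus dyadic decomposition strategy, essentially following Ginibre--Velo and Keel--Tao. The key observation is that once the energy estimate and the dispersive decay are in hand for the half-wave propagator, the admissible range for $(q,p)$ is determined abstractly.

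First I would split $\mathcal{W}(t)(f,g)$ into the two half-wave components $e^{\pm it|\nabla|}h_\pm$, where $h_\pm$ is built from $f$ and $|\nabla|^{-1}g$ via $h_\pm = \tfrac{1}{2}(f \mp i|\nabla|^{-1}g)$. Since we measure $f$ in $\dot{H}^{s_0}$ and $g$ in $\dot{H}^{s_0-1}$ with $s_0 = \tfrac{d}{2}-\tfrac{d}{p}-\tfrac{1}{q}$, both $h_\pm$ are controlled in $\dot{H}^{s_0}$, and it suffices to bound $\|e^{\pm it|\nabla|} h\|_{L^q_tL^p_x}$ by $\|h\|_{\dot{H}^{s_0}}$. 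By symmetry I only need the $+$ sign.

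Next I would perform a Littlewood--Paley decomposition and use the scaling property of the half-wave propagator to reduce to a single frequency-localized piece $P_1 h$ at unit frequency. For such a piece the two inputs to the Keel--Tao machinery are (i) the trivial energy identity $\|e^{it|\nabla|}P_1 h\|_{L^2_x} = \|P_1 h\|_{L^2_x}$, and (ii) the dispersive decay
\begin{equation*}
\|e^{it|\nabla|}P_1 h\|_{L^\infty_x} \lesssim (1+|t|)^{-\frac{d-1}{2}}\|P_1 h\|_{L^1_x},
\end{equation*}
which follows from stationary phase applied to the oscillatory integral representation (the $\frac{d-1}{2}$ rate reflects the curvature of the light cone, whose non-vanishing principal curvatures number $d-1$). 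Feeding these two bounds into the abstract Strichartz framework via $TT^*$ and Hardy--Littlewood--Sobolev gives the non-endpoint estimates in the admissible range, and a Littlewood--Paley square function argument reassembles the dyadic pieces into the homogeneous Sobolev norm with the stated scaling exponent $s_0$.

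The main obstacle is the endpoint $q=2$, which fails for Hardy--Littlewood--Sobolev at the critical level and requires the Keel--Tao refinement: a dyadic-in-time decomposition of $TT^*$, bilinear interpolation against an off-diagonal estimate, and atomic decomposition on the Lorentz space $L^{p,2}_x$. This argument recovers the endpoint in all dimensions $d \geq 2$ except precisely the excluded triple $(q,p,d)=(2,\infty,3)$, where the Klainerman--Machedon counterexample rules out the bound. After this endpoint is secured, real interpolation between the endpoint and the trivial energy estimate $(q,p)=(\infty,2)$ fills in the remaining admissible pairs, completing the proof.
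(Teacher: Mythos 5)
Your proposal is correct and is exactly the classical argument the paper relies on: the paper does not prove this theorem itself but cites Ginibre--Velo and Keel--Tao, whose proof proceeds precisely as you describe (half-wave splitting, Littlewood--Paley plus scaling reduction to unit frequency, energy and $(1+|t|)^{-\frac{d-1}{2}}$ dispersive bounds fed into the $TT^*$/Keel--Tao machinery, with the endpoint $q=2$ handled by the Keel--Tao refinement except for the forbidden triple $(2,\infty,3)$). No gaps to report.
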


\begin{remark}[Natural Extensions to the periodic/waveguide case]
It is well-known that the solution to the linear wave equation possesses the \textit{finite speed of propagation }property. We refer to Tzvetkov \cite{tzvetkov2019random} for the details. By exploiting such a property, one can derive the Strichartz estimates for the linear wave equation under the semi-periodic setting from those in the Euclidean space.\footnote{It is interesting to investigate the global-in-time Strichartz estimate for wave equations in the waveguide setting, which is somehow expected but not known yet. Global-in-time Strichartz estimates are crucial for studying the long time behavior.} Thus we see that Theorem \ref{theorem:WaveStrichartzR^d} still\textbf{ holds} locally in time if one replaces the space $\R^d$ by $\T^d$ or $\R^m\times \T^n$ ($m+n=d$).\footnote{For the waveguide case, it is even possible to show global-in-time Strichartz estimates since there are dispersions due to the Euclidean components. See \cite{Barron14} for the Schr\"odinger case.    }
\end{remark}
Similar to Section 2 in \cite{MR4665720}, combining  Lemma \ref{lemma:TransferencePrinciple} and Theorem \ref{theorem:WaveStrichartzR^d}, the following corollaries in the waveguide setting hold.
\begin{corollary}\label{corollary:StrichartzW}
Let $d=m+n \geq 2$, $L, N \in 2^{\N_0}$ and assume that $q, \, p \in [2, \infty]$ satisfy \eqref{ass:WaveAdmissible}. Then, we have
\begin{equation}\label{est:StrichartzW}
\|w_{\pm}\|_{L_t^q L_x^p} \lesssim L^{\frac12} N^{\frac{d}{2}-\frac{d}{p} - \frac{1}{q}} \|w_{\pm}\|_{L_{t,x}^2},
\end{equation}
for $w_{\pm} \in L^2(\R \times \R^m \times \T^n)$ such that
\[
\textmd{supp } \widetilde{w}_{\pm} \subset \{ (\tau,k) \in \R \times \R^m \times \Z^n \, : \, 
|\tau- \tau_0 \pm \langle k \rangle | \lesssim L, \ \  |k| \sim N \},
\]
where $\tau_0 \in \R$.
\end{corollary}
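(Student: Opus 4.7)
The plan is to derive Corollary \ref{corollary:StrichartzW} as a direct consequence of the linear wave Strichartz estimate (Theorem \ref{theorem:WaveStrichartzR^d}, together with its local-in-time extension to the waveguide $\mathbb{R}^m\times \mathbb{T}^n$ noted in the remark following it) and the abstract transference principle of Lemma \ref{lemma:TransferencePrinciple}. In essence this is an ``$X^{s,b}$-packaging'' of the known linear estimate, and the main task is just to match powers of $N$ correctly and to absorb the harmless time-frequency shift $\tau_0$.

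First, for a frequency-localized datum $P_N \phi$ with $|k|\sim N$, I will apply the wave Strichartz estimate in the waveguide setting to the solution $e^{\mp it\langle\nabla\rangle} P_N \phi$ of the linear half-wave equation. Since on the Littlewood--Paley piece $P_N$ we have $\|P_N \phi\|_{\dot H^s(\mathbb{R}^m\times \mathbb{T}^n)} \lesssim N^s \|P_N \phi\|_{L^2}$, the waveguide version of \eqref{est:WaveStrichartzR^d} (applied with $f = P_N\phi$ and $g=0$, and with the regularity index $s = \tfrac{d}{2}-\tfrac{d}{p}-\tfrac{1}{q}$ read off from Theorem \ref{theorem:WaveStrichartzR^d}) yields
\begin{equation*}
\|e^{\mp it\langle\nabla\rangle} P_N \phi\|_{L^q_{[-1,1]} L^p_x(\mathbb{R}^m\times \mathbb{T}^n)} \lesssim N^{\frac{d}{2}-\frac{d}{p}-\frac{1}{q}} \|P_N \phi\|_{L^2_x}.
\end{equation*}
This is exactly the hypothesis \eqref{ass:lemma1.4-Strichartz} of Lemma \ref{lemma:TransferencePrinciple} with $U(t) = e^{\mp it\langle\nabla\rangle}$ and $\alpha = \tfrac{d}{2}-\tfrac{d}{p}-\tfrac{1}{q}$.

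Next I will invoke Lemma \ref{lemma:TransferencePrinciple} with this choice of $U$, which gives
\begin{equation*}
\|Q^{W_{\pm}}_L P_N u\|_{L^q_t L^p_x} \lesssim L^{1/2} N^{\frac{d}{2}-\frac{d}{p}-\frac{1}{q}} \|Q^{W_{\pm}}_L P_N u\|_{L^2_{t,x}}
\end{equation*}
for arbitrary $u$. To conclude, I must cover the case of general $w_{\pm}$ whose Fourier support lies in the modulated slab $\{|\tau - \tau_0 \pm \langle k\rangle|\lesssim L,\ |k|\sim N\}$ rather than the unshifted one. This is handled by the standard modulation trick: set $\widetilde w_\pm(t,x) := e^{-it\tau_0} w_\pm(t,x)$, which has Fourier support in $\{|\tau \pm \langle k\rangle|\lesssim L,\ |k|\sim N\}$, hence satisfies $\widetilde w_\pm = Q^{W_\pm}_L P_N \widetilde w_\pm$ (up to the usual $\lesssim$ with fattened cutoffs). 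Since $|e^{-it\tau_0}|=1$, both $\|\cdot\|_{L^q_t L^p_x}$ and $\|\cdot\|_{L^2_{t,x}}$ are invariant under this modulation, and \eqref{est:StrichartzW} follows from the previous display applied to $\widetilde w_\pm$.

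No genuine obstacle is expected here; this is a routine consequence of the linear theory packaged into $X^{s,b}$-form, modulo the mild bookkeeping of the $\tau_0$ shift. The only point where some care is needed is confirming that the waveguide Strichartz estimate for the wave equation indeed holds on the full admissible range \eqref{ass:WaveAdmissible} locally in time; this is justified by the finite-speed-of-propagation argument recalled in the remark after Theorem \ref{theorem:WaveStrichartzR^d}, which transplants the Euclidean estimate on $\mathbb{R}^d$ to $\mathbb{R}^m\times\mathbb{T}^n$ without loss on the exponent pair $(q,p)$.
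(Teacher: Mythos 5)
Your proposal is correct and follows essentially the same route as the paper, which obtains this corollary precisely by combining the (locally transplanted) wave Strichartz estimate of Theorem \ref{theorem:WaveStrichartzR^d} with the transference principle of Lemma \ref{lemma:TransferencePrinciple}, the shift $\tau_0$ being absorbed by the standard modulation argument you describe. The only cosmetic point worth flagging is that the propagator here is $e^{\mp it\langle\nabla\rangle}$ rather than $e^{\mp it|\nabla|}$, but on the dyadic block $|k|\sim N$ one has $\langle k\rangle\sim\max(N,1)$ and the same Strichartz bounds hold for the Klein--Gordon half-wave, so this does not affect the argument.
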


\begin{corollary}\label{corollary:StrichartzShell}
Let $d \geq 2$, $L \in 2^{\N_0}$, $N_1$, $N_2 \in 2^{\N_0}$ satisfy $N_2 \leq N_1$, and $p =\frac{2(d+1)}{d-1}$. 
Then we have
\begin{equation}\label{est:StrichartzSS}
\|u\|_{L_{t,x}^p} \lesssim_\varepsilon L^{\frac12}N_2^{\varepsilon}\|u\|_{L_{t,x}^2},
\end{equation}
for any $\varepsilon>0$ and $u \in L^2(\R \times \R^m \times \T^n)$ such that 
\[
\textmd{supp } \widetilde{u} \subset \{(\tau,k) \in \R \times \R^m \times \Z^n \, : \,  |\tau- \tau_0-|k|^2| \lesssim L, \ c_* - 1  \leq |k| \leq c_* + 1, \ k \in B_{N_2}\},
\]
where $\tau_0 \in \R$, $c_* \sim N_1$ and $B_{N_2} \subset \R^d$ is a ball of radius $N_2$ with an arbitrary center. 
\end{corollary}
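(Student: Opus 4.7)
The strategy is to deduce this directly from Theorem \ref{mainthm on RmTn} by applying the transference principle in exactly the form used to derive Corollary \ref{corollary:StrichartzS} from the classical Strichartz estimate. First, I absorb the parameter $\tau_0$ via the modulation $u(t,x) \mapsto e^{-i\tau_0 t} u(t,x)$, which preserves both $\|u\|_{L^p_{t,x}}$ and $\|u\|_{L^2_{t,x}}$ while translating the $\tau$-support; thus without loss of generality $\tau_0 = 0$, so $\widetilde{u}$ is supported in $\{(\tau,k) : |\tau+|k|^2| \lesssim L,\, c_*-1 \leq |k| \leq c_*+1,\, k \in B_{N_2}\}$.

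Next, I follow the foliation argument underlying Lemma \ref{lemma:TransferencePrinciple}. Decomposing along level sets $\tau+|k|^2 = \lambda$ with $|\lambda| \lesssim L$, I write
\[
u(t,x) = \int_{|\lambda| \lesssim L} e^{it\lambda}\bigl(e^{it\Delta}\phi_\lambda\bigr)(x)\, d\lambda, \qquad \widehat{\phi_\lambda}(k) := \widetilde{u}(-|k|^2+\lambda, k).
\]
By Plancherel in $\tau$, one has $\|u\|_{L^2_{t,x}}^2 \sim \int_{|\lambda|\lesssim L} \|\phi_\lambda\|_{L^2_x}^2\, d\lambda$. The crucial observation is that each $\phi_\lambda$ inherits precisely the spatial support condition required by Theorem \ref{mainthm on RmTn}, namely $\operatorname{supp} \widehat{\phi_\lambda} \subset \{c_*-1 \leq |k| \leq c_*+1\} \cap B_{N_2}$ with the same $c_* \sim N_1$ and $B_{N_2}$, so that the shell-type linear estimate is directly applicable $\lambda$-uniformly.

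Applying Minkowski's inequality in $\lambda$, then Theorem \ref{mainthm on RmTn} to each modulated free Schr\"odinger solution, and finally Cauchy--Schwarz to convert the $L^1_\lambda$-norm into an $L^2_\lambda$-norm (gaining the factor $L^{1/2}$ from $|\{\lambda : |\lambda|\lesssim L\}| \lesssim L$), I obtain
\[
\|u\|_{L^p_{t,x}} \leq \int_{|\lambda|\lesssim L}\bigl\|e^{it\Delta}\phi_\lambda\bigr\|_{L^p_{t,x}}\, d\lambda \lesssim_\varepsilon N_2^\varepsilon \int_{|\lambda|\lesssim L}\|\phi_\lambda\|_{L^2_x}\, d\lambda \lesssim_\varepsilon L^{1/2} N_2^\varepsilon \|u\|_{L^2_{t,x}},
\]
which is the claimed bound. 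There is no essential obstacle: the argument is a verbatim adaptation of the derivation of Corollary \ref{corollary:StrichartzS}, with the Littlewood--Paley frequency support replaced by the shell-$\cap$-ball condition and the role of the classical Strichartz estimate played by Theorem \ref{mainthm on RmTn}. The only minor bookkeeping point to verify is the translation invariance needed to exploit the local-in-time estimate $[-1,1]$ on each modulated profile, which is standard and follows by absorbing time shifts into the definition of $\phi_\lambda$.
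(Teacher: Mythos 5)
Your route is the paper's route: the paper obtains this corollary precisely by feeding the shell-type linear estimate (Theorem \ref{mainthm on RmTn}) into the transference principle (Lemma \ref{lemma:TransferencePrinciple}), and your modulation step for $\tau_0$, the foliation $u(t)=\int e^{it\lambda}e^{it\Delta}\phi_\lambda\,d\lambda$ over $|\lambda|\lesssim L$, Plancherel, Minkowski, and Cauchy--Schwarz in $\lambda$ are exactly the mechanics inside that lemma, so the core computation and the source of the factor $L^{1/2}N_2^\varepsilon$ are right.

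The one step that does not go through as written is the passage from the local-in-time hypothesis to the time interval in the conclusion. Theorem \ref{mainthm on RmTn} only controls $\|e^{it\Delta}\phi_\lambda\|_{L^p_{t,x}([-1,1]\times\R^m\times\T^n)}$, whereas the corollary, following the paper's convention in Lemma \ref{lemma:TransferencePrinciple} (``$L^q_t$ denotes $L^q_t(\R)$'') and its use in the bilinear estimates of Proposition \ref{proposition:KeyBilinearEst}, is a bound on $L^p_{t,x}(\R\times\R^m\times\T^n)$. Your Minkowski-then-local-estimate chain therefore proves only the estimate on a fixed unit time window, and the ``absorb time shifts into $\phi_\lambda$'' remark does not repair this: translation invariance indeed gives the same bound on every window $[j,j+1]$, but each such bound has the full right-hand side $L^{1/2}N_2^\varepsilon\|u\|_{L^2_{t,x}(\R)}$, and summing identical bounds over infinitely many windows diverges. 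The globalization requires an almost-orthogonality argument: partition time at unit scale, check that each localized piece retains (up to rapidly decaying tails) modulation support of size $\lesssim L$, apply the local estimate piecewise, and sum using $\sum_j\|\chi_j u\|_{L^2_{t,x}}^2\lesssim\|u\|_{L^2_{t,x}}^2$ together with $\ell^2\hookrightarrow\ell^p$ (here $p\ge 2$). This is precisely the content packaged in the paper's Lemma \ref{lemma:TransferencePrinciple}, which is why the paper derives the corollary by citing that lemma rather than rerunning the foliation; either invoke it (with $P_N$ replaced by the shell-$\cap$-ball projection, which its proof permits) or supply the partition/orthogonality step explicitly.
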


\section{Strichartz Estimates on Shells in $\R^d$: Proof of Theorem \ref{mainthm on Rd}}\label{3}

In this section, we prove global-in-time shell-type Strichartz estimates on the Euclidean space $\mathbb{R}^d$, as stated in Theorem \ref{mainthm on Rd}. The strategy relies on the Stein–Tomas restriction theorem and an application of spherical coordinates.

Recall that the classical Strichartz estimate on $\mathbb{R}^d$ is a $L_x^2\to L_{t,x}^{\frac{2(d+2)}{d}}$-type estimate\footnote{See \cite{keel1998endpoint} for the classical Strichartz estimate for Schr\"odinger equations in the Euclidean setting.}, that is
$$   \|e^{it \Delta} \phi\|_{L^{\frac{2(d+2)}{d}}_{t,x}(\mathbb{R} \times \mathbb{R}^d )} \lesssim  \| \phi\|_{L_x^2(\mathbb{R}^d )}.   $$
The shell-type estimate in $d$ dimensions is concerned, which corresponds to the $(d-1)$-dimensional classical Strichartz estimate, i.e., a $L^2 \to L^{\frac{2(d+1)}{d-1}}$-type estimate. Another frequency-restricted estimate of the same category may be more familiar to the readers (\textit{strip-restricted} type); let us state it explicitly here.

\begin{proposition}\label{prop:Strichartz2}
 Assume that $a\in \mathbb{R}^d, |a|=1$, and the initial data $\phi\in L^2(\mathbb{R}^d)$ with 
$$
{\rm supp}\, \hat{\phi} 
\subset 
\{ \xi \in \mathbb{R}^d : |a\cdot \xi|\le 1\},
$$
then the estimate
\begin{equation}\label{e:Strichartz2}
 \|e^{it \Delta} \phi\|_{L^{\frac{2(d+1)}{d-1}}_{t,x}(\mathbb{R} \times \mathbb{R}^d )} \lesssim  \| \phi\|_{L_x^2(\mathbb{R}^d )}   
\end{equation}
holds.   
\end{proposition}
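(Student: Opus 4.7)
The plan is to reduce \eqref{e:Strichartz2} to the classical sharp diagonal Strichartz estimate in one spatial dimension lower, by exploiting the fact that the strip hypothesis localizes the Fourier support in the direction $a$ to a compact interval. By rotation invariance of $\Delta$ and of the space--time Lebesgue norms, I may assume without loss of generality that $a = e_d$. Splitting coordinates $x = (x', x_d)$ and $\xi = (\xi', \xi_d)$ with $x', \xi' \in \R^{d-1}$, and writing $\psi_{\xi_d}(x') := \mathcal{F}^{-1}_{\xi'}[\widehat{\phi}(\cdot, \xi_d)](x')$, the strip hypothesis yields the representation
\[
e^{it\Delta}\phi(t, x', x_d) = \int_{-1}^{1} \bigl(e^{it\Delta_{x'}}\psi_{\xi_d}\bigr)(x') \, e^{i x_d \xi_d - i t \xi_d^2}\, d\xi_d.
\]

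For each fixed $(t, x')$ the right-hand side is the inverse Fourier transform in $\xi_d$ of a function supported on the compact set $[-1, 1]$. Hausdorff--Young (valid since $p \ge 2$) followed by H\"older on that bounded interval yields
\[
\|e^{it\Delta}\phi(t, x', \cdot)\|_{L^p_{x_d}(\R)} \lesssim \Bigl( \int_{-1}^{1} |(e^{it\Delta_{x'}}\psi_{\xi_d})(x')|^2\, d\xi_d \Bigr)^{1/2}.
\]
Taking the $L^p_{t, x'}$ norm of both sides and applying Minkowski's integral inequality (again valid since $p \ge 2$) to pass $L^p_{t,x'}$ inside $L^2_{\xi_d}$ gives
\[
\|e^{it\Delta}\phi\|_{L^p_{t,x}} \lesssim \Bigl( \int_{-1}^{1} \bigl\|e^{it\Delta_{x'}}\psi_{\xi_d}\bigr\|_{L^p_{t,x'}(\R \times \R^{d-1})}^{2}\, d\xi_d \Bigr)^{1/2}.
\]
The inner norm is precisely the classical global Schr\"odinger Strichartz norm on $\R^{d-1}$ at the sharp diagonal exponent $p = \tfrac{2((d-1)+2)}{d-1} = \tfrac{2(d+1)}{d-1}$, which is bounded by $\|\psi_{\xi_d}\|_{L^2_{x'}}$. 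Plancherel in $\xi'$ together with Fubini then collapses the remaining integral to $\int_{-1}^{1}\|\psi_{\xi_d}\|_{L^2_{x'}}^2\, d\xi_d = \|\widehat{\phi}\|_{L^2(\R^d)}^2 = \|\phi\|_{L^2(\R^d)}^2$, completing the argument.

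The scheme uses only three elementary ingredients -- Hausdorff--Young on a bounded interval, Minkowski's inequality, and classical Strichartz on $\R^{d-1}$ -- and I do not foresee any serious obstacle. The conceptual content is that compactness of the $\xi_d$-support lets H\"older freely upgrade $L^{p'}_{\xi_d}$ to $L^{2}_{\xi_d}$, so the strip-restricted estimate inherits exactly the classical $(d-1)$-dimensional diagonal Strichartz behavior, with no derivative loss and globally in time. The degenerate case $d=1$ (where $p = \infty$) falls outside the Hausdorff--Young scheme but is trivial: a $[-1,1]$-Fourier support makes $e^{it\Delta}\phi$ pointwise bounded by $\|\phi\|_{L^2}$ via Bernstein's inequality.
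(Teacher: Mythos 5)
Your proof is correct and takes essentially the same route as the paper's brief argument for Proposition \ref{prop:Strichartz2}: after rotating $a$ to a coordinate axis, your Hausdorff--Young plus H\"older step is precisely the Bernstein inequality in the restricted variable, and your foliation over $\xi_d$ combined with Minkowski, the slicewise $(d-1)$-dimensional diagonal Strichartz estimate, and Plancherel is the standard way of spelling out the paper's appeal to the classical $(d-1)$-dimensional Strichartz estimate on the unit neighborhood of the paraboloid. There are no gaps; you have simply written out in full the details the paper leaves as a sketch.
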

 We provide a brief explanation of the proof of Proposition \ref{prop:Strichartz2}. Without loss of generality, we may assume that $a=(1,0,\cdots,0)$. Observe that the space-time Fourier support of $e^{it \Delta} \phi$ is contained in $[-1,1]\times \mathcal{N}_{\mathbb{P}^{d-1}}(1)$, where 
 $$\mathcal{N}_{\mathbb{P}^{d-1}}(1)=\{(\eta, \tau)\in \mathbb{R}^{d-1}\times \mathbb{R}:\left|\tau-|\eta|^2\right|\le 1\},$$
then, by applying the Bernstein inequality and the classical 
$(d-1)$-dimensional Strichartz estimate, we could prove \eqref{e:Strichartz2}.

In broad-narrow analysis, one often needs variants of Proposition \ref{prop:Strichartz2}; we refer to \cite{MR2860188} for details. From the perspective of wave packet concentration, an analogous proposition can be found in Section $7$ of \cite{MR3702674}. 

Our main result to be proven in this section (Theorem \ref{mainthm on Rd}) is analogous to Proposition \ref{prop:Strichartz2}, differing in that we impose a distinct assumption on the Fourier support of the initial data $\phi$. Before giving the proof of Theorem \ref{mainthm on Rd}, we provide a quick remark for Proposition \ref{prop:Strichartz2}. 

We note that, it is possible to establish a stronger (more general) version of Proposition \ref{prop:Strichartz2}, i.e. establishing estimates for mixed norm $L^q_t L^p_x$ on the left hand side (full Strichartz range). These estimates allow one to establish well-posedness theory for supercritical NLS with strip-type restricted initial data, which can be regarded as applications.\footnote{For general data, it is not expected to prove well-posedness results for supercritical NLS. We refer to Christ-Colliander-Tao \cite{CCT}.} See Section \ref{new} for more details.   

\begin{proof}[\textbf{Proof of Theorem \ref{mainthm on Rd}}]
The proof relies on the Stein-Tomas restriction theorem on \(S^{d-1}\). The case \(R\lesssim 1\) is trivial; we thus focus on \(R\gg 1\). By assumptions and the change of variables, there holds
\begin{align}
e^{it \Delta} \phi(x) =& \int_{\bR^d} e(x\cdot \xi+ t|\xi|^2) \widehat{\phi}(\xi) {\rm d}\xi \nonumber\\
=&\int_{\bR} e(t r^2) \lf(   \int_{S^{d-1}} e(rx\cdot \xi^\prime) \widehat{\phi}(r\xi^\prime) {\rm d} \sigma (\xi^\prime)    \ri)  r^{d-1}\mathbf{1}_{[R-1,R+1]}(r)       {\rm d}    r .  \label{eq:mainthm on Rd-1}
\end{align}

We define 
$$f(x,r)=\lf(   \int_{S^{d-1}} e(rx\cdot \xi^\prime) \widehat{\phi}(r\xi^\prime) {\rm d} \sigma (\xi^\prime)    \ri)  r^{d-1}\mathbf{1}_{[R-1,R+1]}(r)$$
and
$$  Tg(t)=  \int_{\bR} e(t r^2) g(r) \mathbf{1}_{[R-1,R+1]}(r)       {\rm d}    r.   $$

On the one hand, we have
$$  \|Tg\|_{L^\infty_t} \lesssim \|g\|_{L^1_r},   $$
and using Plancherel’s Theorem, 
\begin{align*}
\|Tg\|_{L^2_t} = & \|\int_{\bR} e(t s) g(\sqrt{s}) \mathbf{1}_{[(R-1)^2,(R+1)^2]}(s) s^{-\frac12}       {\rm d}    s\|_{L^2_t} \\
\sim& \| g(\sqrt{s}) \mathbf{1}_{[(R-1)^2,(R+1)^2]}(s) s^{-\frac12}       \|_{L^2_s} \less R^{-1/2}\|g\|_{L^2_r},
\end{align*}
by interpolation, we get 
\begin{equation} \label{eq:mainthm on Rd-2}
    \|Tg\|_{L^p_t} \less R^{-1/p} \|g\|_{L^{p^\prime}_r}.
\end{equation}

On the other hand, by the Stein-Tomas restriction theorem on the sphere $S^{d-1}$, for fixed $r$,
\begin{align}
    \|f(x,r)\|_{L^p_x}=&\lf( \int_{\R^d}\lf|  \int_{S^{d-1}} e(rx\cdot \xi^\prime) \widehat{\phi}(r\xi^\prime) {\rm d} \sigma (\xi^\prime)     \ri|^p {\rm d}(r x) \ri)^{1/p} r^{d-1-d/p}\mathbf{1}_{[R-1,R+1]}(r) \nonumber \\
     \less & \lf( \int_{S^{d-1}} |\widehat{\phi}(r\xi^\prime)|^2 {\rm d} \sigma (\xi^\prime)  \ri)^{1/2} r^{d-1-d/p}\mathbf{1}_{[R-1,R+1]}(r), \label{eq:mainthm on Rd-3}
\end{align}
where we have used $p=\frac{2(d+1)}{d-1}$.

By \eqref{eq:mainthm on Rd-1}, \eqref{eq:mainthm on Rd-2} and Minkowski inequality, we deduce that
\begin{align*}
\|e^{it \Delta} \phi(x)\|_{L^p_x L^p_t} =& \| T(f(x,r)) (t)\|_{L^p_x L^p_t} \less  R^{-1/p} \|f(x,r)\|_{L^p_x L^{p^\prime}_r} \\
 \le &R^{-1/p} \|f(x,r)\|_{ L^{p^\prime}_r L^p_x},
\end{align*}
then we use \eqref{eq:mainthm on Rd-3}, H\"older inequality and the change of variables, there holds
\begin{align*}
& R^{-1/p} \|f(x,r)\|_{ L^{p^\prime}_r L^p_x} 
\less   R^{-1/p} \lf\|\lf( \int_{S^{d-1}} |\widehat{\phi}(r\xi^\prime)|^2 {\rm d} \sigma (\xi^\prime)  \ri)^{1/2} r^{d-1-\frac{d}{p}}\mathbf{1}_{[R-1,R+1]}(r) \ri\|_{L^{p^\prime}_r} \\
\less & R^{(d-1)/2-d/p-1/p} \lf\|\lf( \int_{S^{d-1}} |\widehat{\phi}(r\xi^\prime)|^2 {\rm d} \sigma (\xi^\prime)  \ri)^{1/2} r^{(d-1)/2}\mathbf{1}_{[R-1,R+1]}(r) \ri\|_{L^{2}_r} \\
\less & R^{(d-1)/2-d/p-1/p} \lf(  \int_0^\infty \int_{S^{d-1}} |\widehat{\phi}(r\xi^\prime)|^2 r^{d-1}  {\rm d} \sigma (\xi^\prime)  {\rm d} r        \ri)^{1/2}\sim \|\phi\|_{L_x^2}.
\end{align*}
This completes the proof of Theorem \ref{mainthm on Rd}.
\end{proof}

\begin{remark}
 In general, how different restrictions on the Fourier support of the initial data $\phi$ influence the corresponding Strichartz estimates is an interesting problem. Two natural types of restrictions (\textit{shell-type} and \textit{strip-type}) are discussed in the current paper. It is also interesting to investigate a stronger (more general) version of Theorem \ref{mainthm on Rd}, i.e. establishing estimates for mixed norm $L^q_t L^p_x$ on the left hand side in Theorem \ref{mainthm on Rd}, which is highly nontrivial.   
\end{remark}

\section{Waveguide Geometry in 3D: Estimates and Sharp Counter-examples}\label{4}
Building on the Euclidean case, we now turn to the waveguide setting, where the interplay between Euclidean and periodic components introduces new challenges. The Euclidean result  relies on the full dispersion, whereas the waveguide case requires a delicate balance between continuous and discrete frequencies. 

In this section, we present the proofs for Theorem \ref{mainthm on R2T} and Theorem \ref{counterexample on RTd-1} respectively. In principle, we intend to prove an analogous result of Theorem 1.7 in \cite{MR4665720} for the case $\mathbb{R}^2 \times \mathbb{T}$. Compared to the analogous inequalities on $\mathbb{T}^3$ , the estimate we establish here on $\mathbb{R}^2 \times \mathbb{T}$ is global-in-time and requires no derivative loss. Moreover, we discuss the derivative loss issue of the shell-type Strichartz estimate for the $\mathbb{R}\times\mathbb{T}^d$ case by giving the proof of Theorem \ref{counterexample on RTd-1}.\footnote{We note that $\mathbb{R}\times\mathbb{T}^2$ case is included when $d=2$. These results, together with Theorem 1.7 in \cite{MR4665720}, offer a complete view for shell-type Strichartz estimates on three dimensional spaces $\mathbb{R}^m\times \mathbb{T}^n$ ($m+n=3,m,n\geq 1$).} To be specific, we will demonstrate that the derivative loss is inevitable.

\subsection{The proof of Theorem \ref{mainthm on R2T}}
In contrast to the three-dimensional Euclidean case (Theorem \ref{mainthm on Rd}), our Theorem \ref{mainthm on R2T} replaces the underlying space of the initial data from $\mathbb{R}^3$ with $\mathbb{R}^2 \times \mathbb{T}$. By a standard argument as in \cite{DPST}, we will reduce the $L^2 \to L^4$-type estimate \eqref{e:ShellStrichartz} to measure estimates. Leveraging a simple yet insightful idea drawn from a different analytical setting, we make use of the machinery developed in \cite{basu2021stationarysetmethodestimating} to derive the required measure estimates. In particular, the proof relies on controlling the measure of \textit{semi-algebraic sets} in $\mathbb{R}^2\times\mathbb{Z}$. The key observation is that the Euclidean component dominates the periodic resonance, preventing derivative loss.

 Following the exposition in \cite{basu2021stationarysetmethodestimating}, we provide the definition of semi-algebraic sets and related notions. We will say that a set $U \subset \bR^{d+1}$ is a \emph{semi-algebraic set}, if $U$ is a finite union of subsets, each of which is defined by a formula of the form $P=0, Q_1>0,\cdots, Q_k>0$, where each $P, Q_1,\cdots, Q_k$ is polynomial. If the total number of polynomials that define $U$ is bounded by $s$, and the maximum degree of these polynomials is bounded by $D$, we will say that the \emph{complexity} of $U$ is bounded by $sD$. We now state a lemma concerning the upper bound of the measure of semi-algebraic sets in $\mathbb{R}^d \times \mathbb{Z}$.

\begin{remark}[Intuition]
The key is controlling $\sum_{n\in\mathbb{Z}}\int_{\mathbb{R}^2}\mathbf{1}_U(\xi,n)d\xi$ via \textbf{measure-theoretic transversality}: the semi-algebraic set $U$ intersects lattice fibers $\mathbb{R}^2\times\{n\}$ with controlled cardinality.
\end{remark}

\begin{lemma}\label{lem:control semialgebraic set}
Let $d \ge 1$ and let $U$ be a bounded semi-algebraic set on $\bR^{d+1}$ with complexity $\less 1$. Then
$$  \sum_{n\in \Z}\int_{\bR^d } \mathbf{1}_{U}(\xi,n) {\rm d}\xi \less |U|+       \sup_{n\in \Z}\int_{\bR^d } \mathbf{1}_{U}(\xi,n) {\rm d}\xi,      $$
where $|U|$ denotes the Lebesgue measure of $U$ on $\bR^{d+1}$.    
\end{lemma}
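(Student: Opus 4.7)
The plan is to reduce the claim to a one-dimensional Riemann-sum-versus-integral comparison for the ``slice measure'' function
\[
f(t) := \int_{\bR^d}\mathbf{1}_U(\xi,t)\,d\xi, \qquad t\in\bR.
\]
By Fubini, $\int_\bR f(t)\,dt = |U|$ and $\sum_{n\in\Z} f(n)$ is the left-hand side of the claim, so the desired bound becomes
\[
\sum_{n\in\Z}f(n) \less \int_\bR f(t)\,dt + \sup_{n\in\Z}f(n).
\]

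First I would establish the structural fact that, because $U$ is bounded and semi-algebraic of complexity $\less 1$, the function $f$ is tame in the following quantitative sense: there exist a uniformly bounded integer $M$ and points $t_0<t_1<\cdots<t_M$ such that $f$ vanishes outside $[t_0,t_M]$ and is monotone on each subinterval $(t_{j-1},t_j)$. This is the main technical input, obtained from the quantitative real-algebraic-geometry machinery recalled in \cite{basu2021stationarysetmethodestimating}. Informally, the candidates for critical points of $f$ are precisely those parameters $t$ at which the topology of the slice $U\cap(\bR^d\times\{t\})$ changes; their number is bounded solely in terms of the complexity of $U$ via classical Thom--Milnor / cylindrical algebraic decomposition estimates, and between two successive critical values $f$ varies as a tame (in fact real-analytic, up to endpoints) function that possesses no further interior extrema thanks to the same complexity bound.

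With this structural result in hand, the remainder is elementary. On a monotone, say nondecreasing, piece $I_j=(t_{j-1},t_j)$, let $n_1<\cdots<n_k$ enumerate $I_j\cap\Z$. For $i<k$, monotonicity gives $f(n_i)\le \int_{n_i}^{n_{i+1}}f(t)\,dt$; summing these and bounding the last term by $f(n_k)\le\sup_{n\in\Z}f(n)$ produces
\[
\sum_{i=1}^{k}f(n_i)\le \int_{I_j}f(t)\,dt + \sup_{n\in\Z}f(n).
\]
The nonincreasing case is symmetric (one discards the leftmost integer instead). Adding the at most $M$ such contributions and using $M\less 1$ yields $|U|+M\sup_n f(n)\less |U|+\sup_n f(n)$, which is the claim.

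The principal obstacle is the first step: extracting a complexity-dependent bound on the number of monotone pieces of $f$. The integral of the indicator of a semi-algebraic set is not itself semi-algebraic in general, so one cannot simply invoke Tarski--Seidenberg; a careful stratification of $U$ by its critical set (as in the ``stationary set method'' of \cite{basu2021stationarysetmethodestimating}) is required. The remaining ingredients --- Fubini, the monotone sum-vs-integral comparison, and summing over $O(1)$ pieces --- are standard and present no real difficulty.
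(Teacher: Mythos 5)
Your proposal is correct and follows essentially the same route as the paper: the paper also reduces to the slice function $F(\eta)=\int_{\bR^d}\mathbf{1}_U(\xi,\eta)\,d\xi$, quotes the bounded-monotonicity-change statement directly as Lemma 2.9 of \cite{basu2021stationarysetmethodestimating}, and concludes via the same elementary sum-versus-integral comparison on each monotone piece. The only difference is cosmetic: you sketch why the key structural input holds, whereas the paper simply cites it.
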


The meaning of the aforementioned lemma can be intuitively illustrated through simple schematic diagrams. See the figure as below. Let the red region in Figure $1$ represent an arbitrary semi-algebraic set $U$. 
Lemma \ref{lem:control semialgebraic set} asserts that the total length of intersections between the blue dashed lines and $U$ 
can be controlled by the total measure of $U$ and maximal slice measure.

\begin{center}
\begin{tikzpicture}[scale=1.2]

    \draw[thick, red, fill=red!20, fill opacity=0.3] 
        plot[smooth cycle, tension=0.8] coordinates {
            (-3,1) (-1,2) (1,1.5) (3,0.5) 
            (2,-1) (0,-2) (-2,-1) (-3,1)
        };

    \foreach \y in {-2.4,-1.6,-0.8,0.0,0.8,1.6,2.4} {
        \draw[blue, dashed] (-4.5,\y) -- (4.5,\y);
    }
    \node at (0,-3){Figure 1};
   \end{tikzpicture}
   \end{center}
(Red region: a semi-algebraic set $U$; blue lines: lattice points $\mathbb{Z}$.)

We shall first assume the validity of Lemma \ref{lem:control semialgebraic set} to establish Theorem \ref{mainthm on R2T}. We note that the set \( U \) is defined by polynomials of degree \( \leq 2 \) (from quadratic forms \( (\xi - a) \cdot (\xi - b) \)), hence its complexity is uniformly bounded.

\begin{proof}[\textbf{Proof of Theorem \ref{mainthm on R2T}}]
It suffices to prove that
$$\|e^{it \Delta} \phi\|_{L^4_{t,x}([0,T] \times \mathbb{R}^2 \times \mathbb{T})} \lesssim  \| \phi\|_{L_x^2(\mathbb{R}^2 \times \mathbb{T})}$$
holds for all $T>1$, $\phi \in L^2(\mathbb{R}^2 \times \mathbb{T})$ satisfying 
$$
{\rm supp}\, \hat{\phi} 
\subset 
\{ (\xi, n) \in \mathbb{R}^2\times \Z : c_*-1 \le |(\xi, n)| \le c_* + 1\} \cap B_{c_*/100},
$$
where $B_{c_*/100} \subset \R^3$, a ball of radius $c_*/100$ with an arbitrary center.

 Define $V=\{ (\xi, n) \in \mathbb{R}^2\times \Z : c_*-1 \le |(\xi, n)| \le c_* + 1\} \cap B_{c_*/100}$, and we assume that $V$ is nonempty.   Then, by a standard argument in the proof of Proposition 3.7 in \cite{DPST}, it suffices to prove that, for any fixed $a, b\in V $, there holds 
 $$   \sum_{n \in \Z} \int_{\bR^2} \mathbf{1}_U(\xi, n) {\rm d}\xi\less \frac1T,  $$
 where 
 $$     U=\{ \w{\xi}\in \bR^3: c_*-1 \le |\w{\xi}| \le c_* + 1, |(\w{\xi}-a)\cdot(\w{\xi}-b)|\le \frac1T  \}.$$

A direct calculation shows 
  $$\sup_{n\in \Z}\int_{\bR^2 } \mathbf{1}_{U}(\xi,n) {\rm d}\xi \less \frac1T.  $$
  After applying Lemma \ref{lem:control semialgebraic set} for the set $U$, it suffices to prove
  $$  |U| \less \frac1T.$$

 Making a rotation, we could assume that
  $$  \frac{a+b}{2}=(\alpha, 0, 0), \quad \alpha>0.  $$
  Observe that $a,b \in V$, so
  $$  c_*-1 \le |a|, |b| \le c_* + 1, \quad |\frac{a-b}{2}|\le   c_*/100 .     $$
  By some computations, we can deduce that
  $$  \alpha \gtrsim c_* .$$
Let $\w{\xi}=(\alpha+r\cos \theta, r\sin \theta \cos \varphi, r\sin \theta \sin \varphi)$, where $r\ge 0, 0\le \theta \le \pi$ and $ 0\le \varphi \le 2\pi$. Define $\beta=|\frac{a-b}{2}|$. Then 
$$ \w{\xi}\in U \Longleftrightarrow |r^2-\beta^2|\le \frac1T, (c_*-1)^2\le \alpha^2+2\alpha r \cos \theta+r^2\le (c_*+1)^2 . $$

 If $\beta^2 > \frac1T$, we have 
\begin{align*}
 |U|  &= 2\pi \int_{\sqrt{\beta^2 - 1/T}}^{\sqrt{\beta^2 + 1/T}} \int_0^\pi \mathbf{1}_{\{ \alpha^2+2\alpha r \cos \theta+r^2\in [(c_*-1)^2, (c_*+1)^2] \}}(r,\theta) r^2 \sin \theta {\rm d} \theta {\rm d} r \\
 &= 2\pi \int_{\sqrt{\beta^2 - 1/T}}^{\sqrt{\beta^2 + 1/T}} \int_{-r}^r \mathbf{1}_{\{ \alpha^2+2\alpha s +r^2\in [(c_*-1)^2, (c_*+1)^2] \}}(r,s) r {\rm d} s {\rm d} r \\
 &\less \int_{\sqrt{\beta^2 - 1/T}}^{\sqrt{\beta^2 + 1/T}} \frac{c_*}{\alpha} r {\rm d} r.
\end{align*}
Note that $\alpha \gtrsim c_*$, then we get $|U| \less \frac1T$. if $\beta^2 \le\frac1T$, by a similar computation, we can also prove $|U|\less \frac{1}{T}$. 

\end{proof}

Now, it remains to prove Lemma \ref{lem:control semialgebraic set}. We combine the following two lemmas to achieve this goal. 

\begin{lemma} \label{lem:control semialgebraic set-1}
Suppose $I\subset \bR$ is a interval. Let $F\in C(I)$ be a non-negative function. If $F$  changes monotonicity $O(1)$ times, then
$$   \sum_{n \in I\cap \Z} F(n) \less \int_I F(\eta) {\rm d}\eta +  \sup_{n \in I\cap \Z} F(n).            $$
\end{lemma}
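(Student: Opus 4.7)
The plan is to decompose $I$ into finitely many subintervals on each of which $F$ is monotone, and then to compare the integer sum to the integral via the standard baby estimate for monotone functions on a unit interval. The hypothesis that $F$ changes monotonicity only $O(1)$ times makes this decomposition available with a uniform bound $M = O(1)$ on the number of pieces, so it suffices to prove the claim on a single monotone piece and then sum, absorbing the multiplicative constant $M$ into $\less$.

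More concretely, I would write $I = \bigcup_{j=1}^M I_j$ with closed subintervals $I_j = [\alpha_j, \beta_j]$ on which $F$ is monotone, and split
$$\sum_{n \in I \cap \Z} F(n) \;\leq\; \sum_{j=1}^M \sum_{n \in I_j \cap \Z} F(n).$$
It then suffices to establish the asserted bound on each $I_j$. Suppose $F$ is non-decreasing on $I_j$. For each integer $n$ with $\alpha_j \leq n$ and $n+1 \leq \beta_j$, monotonicity gives $F(n) \leq F(\eta)$ for all $\eta \in [n, n+1]$, hence $F(n) \leq \int_n^{n+1} F(\eta)\, {\rm d}\eta$. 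Summing these unit-interval bounds telescopes into
$$\sum_{\substack{n \in I_j \cap \Z \\ n \leq \beta_j - 1}} F(n) \;\leq\; \int_{I_j} F(\eta)\, {\rm d}\eta,$$
and the at most one leftover integer in $(\beta_j - 1, \beta_j]$ contributes at most $\sup_{n \in I \cap \Z} F(n)$. The non-increasing case is identical after swapping the roles of $\alpha_j$ and $\beta_j$ (the leftover then lies in $[\alpha_j, \alpha_j+1)$). Combining the $M$ monotone pieces yields the claim.

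There is essentially no analytic obstacle: the entire content is the pointwise comparison $F(n) \leq \int_n^{n+1} F(\eta)\, {\rm d}\eta$ on a unit interval where $F$ is monotone, together with the bookkeeping of boundary integers. The $O(1)$-monotonicity-change hypothesis is precisely what reduces the general statement to this baby case. The only mild care required is in handling subintervals $I_j$ of length less than one, for which the integral may be negligible but the sup term absorbs the (at most $O(1)$) integers present; this too is harmless since the number of such short pieces is bounded by $M$.
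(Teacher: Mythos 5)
Your proposal is correct and follows essentially the same route as the paper, which proves the lemma by the single observation that $F(n)\le \int_n^{n+1}F(\eta)\,{\rm d}\eta$ on unit intervals where $F$ is non-decreasing (and symmetrically in the non-increasing case), with the $O(1)$ boundary integers absorbed by the supremum term. Your write-up merely makes the monotone-piece decomposition and boundary bookkeeping explicit, which is fine.
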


\begin{lemma}[Lemma 2.9 in \cite{basu2021stationarysetmethodestimating}] \label{lem:control semialgebraic set-2}
Let $d \ge 1$ and let $U$ be a bounded semi-algebraic set on $\bR^{d+1}$ with complexity $\le k$. Then the function
$$   F(\eta):=    \int_{\bR^d } \mathbf{1}_{U}(\xi,\eta) {\rm d}\xi       $$
changes monotonicity $O_{d,k}(1)$ times.

\end{lemma}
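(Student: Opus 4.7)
The plan is to combine a cylindrical algebraic decomposition (CAD) of $\mathbb{R}^{d+1}$ adapted to the polynomials defining $U$ with an induction on the ambient dimension $d$. The CAD produces a partition of the $\eta$-axis into $O_{d,k}(1)$ open intervals $I_1,\ldots,I_M$, together with finitely many exceptional points, such that over each $I_j$ the set $U \cap (\mathbb{R}^d \times I_j)$ decomposes into $O_{d,k}(1)$ cylindrical cells whose boundaries are graphs of real-analytic semi-algebraic functions $\varphi_i(\xi_1,\ldots,\xi_{d-1},\eta)$ of algebraic degree controlled by $k$. Thus on each $I_j$ the slice $U_\eta$ has a combinatorially constant description that varies analytically with $\eta$, reducing the whole problem to a uniformly bounded number of pieces.

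On each $I_j$ the restriction of $F$ can then be written as a finite sum
$$
F(\eta)\big|_{I_j} \;=\; \sum_{i} \int_{D_i(\eta)} \bigl(b_i(\xi',\eta) - a_i(\xi',\eta)\bigr)\, d\xi',
$$
where $D_i(\eta)\subset \mathbb{R}^{d-1}$ are semi-algebraic slices of controlled complexity and $a_i,b_i$ are Nash branches coming from the CAD. Differentiating under the integral sign expresses $F'(\eta)$ on $I_j$ as a finite sum of $(d-1)$-dimensional marginals of semi-algebraic sets of bounded complexity, which by the inductive hypothesis individually have $O_{d-1,k'}(1)$ sign changes. The base case $d=0$ is immediate, since $F$ is then the indicator of a semi-algebraic subset of $\mathbb{R}$ and is piecewise constant with $O(k)$ jumps.

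The main obstacle is the stability of monotonicity counts under summation: the naive induction controls the number of sign changes of each summand of $F'$, but cancellations between summands could a priori create many new ones. I would circumvent this by working inside an o-minimal structure containing $F$ — the subanalytic structure $\mathbb{R}_{\mathrm{an}}$ suffices, since slice volumes of semi-algebraic sets are subanalytic on each CAD interval — in which every definable continuous function admits only finitely many monotonicity intervals, with an effective bound depending on the length of the defining formula and hence only on $d$ and $k$. A purely real-algebraic alternative, in the spirit of \cite{basu2021stationarysetmethodestimating}, is to realize the stationary set $\{\eta : F'(\eta)=0\}$ as the image under a semi-algebraic projection of the critical locus of an auxiliary polynomial map built from the $\varphi_i$, and bound its number of connected components by Milnor--Thom-type estimates on the Betti numbers of real algebraic sets, which are polynomial in $k$. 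Summing the per-interval bounds over the $O_{d,k}(1)$ CAD intervals then yields the claimed global bound.
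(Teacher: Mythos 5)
The paper itself offers no proof of this lemma: it is quoted verbatim as Lemma~2.9 of the cited work of Basu--Guo--Zhang--Zorin-Kranich, so the only meaningful comparison is with the argument underlying that citation, which rests on definability of parametrized volumes in an o-minimal structure. Your overall plan (decompose, then invoke tameness of $\eta\mapsto F(\eta)$ plus uniformity over the family of sets of complexity $\le k$) is the right one, but the decisive step is wrong as written: slice volumes of bounded semi-algebraic sets are \emph{not} in general subanalytic, so $\mathbb{R}_{\mathrm{an}}$ does not suffice. For example, $U=\{(x,y,\eta): 0<\eta<x<1,\ 0<y,\ xy<\eta\}$ is bounded with complexity $O(1)$, yet $F(\eta)=\int_{\mathbb{R}^2}\mathbf{1}_U(x,y,\eta)\,dx\,dy=\eta\log(1/\eta)$ on $(0,1)$, which admits no Puiseux expansion at $0$ and hence is not globally subanalytic. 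Nor does ``subanalytic on each CAD interval,'' read locally, rescue the step: a function real-analytic on an open interval can change monotonicity infinitely often, with the changes accumulating at an endpoint, so local analyticity on the open cells gives no finiteness at all. What is actually needed is the nontrivial theorem that parametrized integrals of semi-algebraic (indeed subanalytic) families are log-analytic/constructible and therefore definable in the o-minimal structure $\mathbb{R}_{\mathrm{an},\exp}$ (Lion--Rolin, Comte--Lion--Rolin, Cluckers--Miller), \emph{jointly} in $\eta$ and in the coefficient vector of the defining polynomials; the uniform bound $O_{d,k}(1)$ then follows by o-minimal finiteness applied to this definable family, not from an ``effective bound depending on the length of the defining formula'' --- o-minimality yields uniform, not effective, bounds, and effectivity is neither available by this route nor needed.

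Your two fallback routes do not close this gap. The induction via differentiation under the integral sign is, as you yourself note, defeated by cancellations among the summands (and, as written, it also drops the transport/boundary terms coming from the $\eta$-dependence of the domains $D_i(\eta)$), so it proves nothing about the sum $F'$. The Milnor--Thom alternative fails for a structural reason: $F'$ is in general not semi-algebraic (logarithms genuinely occur, as in the example above), so $\{\eta: F'(\eta)=0\}$ is not presented as the projection of a real algebraic set of bounded complexity, and Betti-number bounds for algebraic sets do not apply to it. Once you replace $\mathbb{R}_{\mathrm{an}}$ by $\mathbb{R}_{\mathrm{an},\exp}$, invoke the parametrized-integration theorem, and phrase uniformity via definable families, the CAD preprocessing becomes largely superfluous and your argument collapses to essentially the proof behind the cited Lemma~2.9.
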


To establish Lemma \ref{lem:control semialgebraic set-1}, it suffices to observe that when the non-negative function $F$ is non-decreasing, 
the inequality 
  $F(n)\le  \int_{n}^{n+1} F(x) {\rm d}x$  holds. The proof of Lemma \ref{lem:control semialgebraic set} thereby reduces to the direct application of Lemma \ref{lem:control semialgebraic set-1} and Lemma \ref{lem:control semialgebraic set-2}.

\begin{remark}
 The application of Lemma \ref{lem:control semialgebraic set} in the proof of Theorem \ref{mainthm on R2T} allows us to systematize measure-theoretic computations for Strichartz estimates on waveguides, particularly encompassing the key calculations underlying the main theorems in \cite{MR4782142, takaoka20012d, MR4219972}.

 The counter-example in Theorem \ref{counterexample on RTd-1} exploits the lack of dispersion in the \(\mathbb{T}^{2}\) component, which allows energy concentration at high frequencies. In contrast, two Euclidean dimensions (\(\mathbb{R}^{2}\)) provide sufficient dispersion to prevent this.
\end{remark}

\subsection{A counter-example for the $\mathbb{R}\times \mathbb{T}^{d-1}$ case}
We now present the proof for Theorem \ref{counterexample on RTd-1}. We will borrow the idea from the example constructed in \cite{MR4665720} for the periodic case with suitable modifications, to illustrate that the Strichartz estimate on the shell for $\bR \times \T^{d-1}$ does not hold.\footnote{We note that, in principle, shell-type Strichartz estimate on $\bR \times \T^{d-1}$ can be compared with Strichartz estimate on $\T^{d-1}$, where there must be a derivative loss (see \cite{Bourgain13,takaoka20012d} for counter-examples). We also note that it is interesting to prove ``logarithm-type" derivative loss shell-type Strichartz estimates on $\bR \times \T^{d-1}$ or  on $\T^{d}$. We refer to a recent breakthrough Herr-Kwak \cite{Herr} for more details.} (The case $\bR \times \T^{2}$ is covered.)

\begin{proof}[\textbf{Proof of Theorem \ref{counterexample on RTd-1}}]
We assume that $c_*=N_1$ without loss of generality.   Let 
\begin{equation*}
\hat{\phi}(\xi, n)=\begin{cases}
    1, & \mbox{if~} \xi\in (N_1-\frac{1}{100}, N_1+\frac{1}{100}), 1\le n_j \le d^{-\frac{1}{2}}N_2^{\frac12}, n_j\in \Z, j=1,\cdots, d-1, \\
    0, & \mbox{otherwise}.
\end{cases}
\end{equation*}
Then $\|\phi\|_{L^2}\sim N_2^{\frac{d-1}{4}}$. We have
\begin{align*}
&\|e^{it \Delta} \phi\|_{L^p_{t,x}([0,1] \times \mathbb{R} \times \mathbb{T}^{d-1})}^p \\
= & \int_0^1 \lf(  \lf(\int_\bR \lf|\int_{N_1-\frac{1}{100}}^{N_1+\frac{1}{100}} e^{2\pi i(x_1 \xi+t \xi^2)} {\rm d}\xi \ri|^p {\rm d}x_1 \ri) \lf(\int_{\mathbb T} \lf|\sum_{1\le k \le d^{-\frac12}N_2^{\frac12}} e^{2\pi i(y k+t k^2)}  \ri|^p {\rm d}y \ri)^{d-1}  \ri) {\rm d} t.
\end{align*}
Note that, when $t\in [0,1]$ is fixed, the integral
\begin{align*}
  \int_\bR \lf|\int_{N_1-\frac{1}{100}}^{N_1+\frac{1}{100}} e^{2\pi i(x_1 \xi+t \xi^2)} {\rm d}\xi \ri|^p {\rm d}x_1 = \int_\bR \lf|\int_{-\frac{1}{100}}^{\frac{1}{100}} e^{2\pi i(s \eta+t \eta^2)} {\rm d}\eta \ri|^p {\rm d}s \gt 1,
\end{align*}
here we make a change of variables $s=x_1+2N_1t, \eta=\xi-N_1$. Thus
\begin{align*}
    &\|e^{it \Delta} \phi\|_{L^p_{t,x}([0,1] \times \mathbb{R} \times \mathbb{T}^{d-1})}^p\\
\gt & \int_0^1    \lf(\int_{\mathbb T} \lf|\sum_{1\le k \le d^{-\frac12}N_2^{\frac12}} e^{2\pi i(y k+t k^2)}  \ri|^p {\rm d}y \ri)^{d-1}   {\rm d} t \\
\gt & (\log N_2) N_2^{\frac{d+1}{2}} \sim (\log N_2) \|\phi\|_{L^2}^p.
\end{align*}
See Theorem 13.6 in \cite{MR3971577} for the last inequality.

It shows that there is at least an inevitable log-derivative loss. This contradicts the assumed bound, thus proving the necessity of derivative loss.
\end{proof}

\subsection{Conclusion for the 3D case}
We now have four specific cases: $\mathbb{R}^3$, $\mathbb{R}^2 \times \mathbb{T}$, $\mathbb{R} \times \mathbb{T}^2$, and $\mathbb{T}^3$.

The shell-type Strichartz estimate for $\mathbb{R}^3$ is global-in-time and there is no derivative loss; the shell-type Strichartz estimate for $\mathbb{R}^2 \times \mathbb{T}$ is global-in-time, and there is no derivative loss; the shell-type Strichartz estimate for $\mathbb{R} \times \mathbb{T}^2$ is local-in-time and there must be a derivative loss (see the next section for the proof of the estimate with a derivative loss); the shell-type Strichartz estimate for $\mathbb{T}^3$ is local-in-time, and there must be a derivative loss.

We summarize the distinctions between different geometries in Table \ref{tab:3d}, which provides a clear analytical landscape. This completes the characterization of the shell-type Strichartz estimates in three-dimensional product geometries, and provides the analytical backbone for the well-posedness theory.
\begin{table}[h]
\centering
\caption{Comparison of shell-type Strichartz estimates in three-dimensional geometries}
\label{tab:3d}
\begin{tabular}{@{}lllll@{}}
\toprule
\textbf{Setting} & \textbf{Global/Local} & \textbf{Derivative Loss} & \textbf{Key Technique} \\ \midrule
$\mathbb{R}^3$ & Global & No & Stein-Tomas \\
$\mathbb{R}^2 \times \mathbb{T}$ & Global & No & Semi-algebraic sets \& measure estimate \\
$\mathbb{R} \times \mathbb{T}^2$ & Local & Yes ($\epsilon$) \& inevitable & Decoupling \& counter-example  \\ 
$\mathbb{T}^3$ & Local & Yes ($\epsilon$) \& inevitable & Decoupling \& counter-example\\
\bottomrule
\end{tabular}
\end{table}

\begin{remark}
The dichotomy between \(\mathbb{R}^{2}\times\mathbb{T}\) (no derivative loss) and \(\mathbb{R}\times\mathbb{T}^{2}\) (\(\epsilon\)-loss) suggests a deeper geometric principle: dispersion dominates confinement when the Euclidean dimension \(\geq 2\). This aligns with physical intuition in optical fiber design, where higher-dimensional dispersion suppresses signal distortion.    
\end{remark}

\section{Other Dimensional Shell-Type Estimates: Proof of Theorem \ref{mainthm on RmTn}}\label{5}

Having established sharp shell-type Strichartz estimates and counter-examples in the three-dimensional waveguide setting, we now generalize our framework to higher dimensions, analyzing the behavior on $\mathbb{R}^m \times \mathbb{T}^n$ for arbitrary $m,n \geq 1$.

In this section, we establish $\varepsilon$-derivative-loss shell-type Strichartz estimates for general waveguide manifolds (i.e., proving Theorem \ref{mainthm on RmTn}) based on the decoupling inequality from \cite{MR4665720}. This result recovers the periodic shell-type Strichartz estimates for the general waveguide case.
\subsection{The proof of Theorem \ref{mainthm on RmTn}}

We recall the following result in \cite{MR4665720}.
\begin{proposition}[Corollary $3.3$ in \cite{MR4665720}]\label{Cor:TwoScaleDecoup}
                Let $d\ge 2$, $N_1 \ge N_2\ge1$, $p=\frac{2(d+1)}{d-1}$ and take arbitrary $d_* \in[1,2]$. 
                Then for arbitrarily small $\varepsilon$, 
                \begin{equation}\label{e:N_2Gain}
                \big\| E_{\mathbb{P}^d} f \big\|_{L^p(B_{N_1^2})} \le C_\varepsilon N_2^{\varepsilon} \big( \sum_{\theta \in \mathcal{C}_{N_1^{-1}}} \big\| E_{\mathbb{P}^d} f_\theta  \big\|_{L^p(w_{B_{N_1^2}})}^2  \big)^\frac12
                \end{equation}
                holds for all $f$ satisfying 
                \begin{equation}\label{e:Assumpg2}
                {\rm supp}\, f\subset \{\xi\in\mathbb{R}^d: d_*-\frac1{N_1}\le |\xi|\le d_* + \frac1{N_1}\} \cap B_{N_2/N_1},
                \end{equation}
                where $B_{r} \subset \mathbb{R}^d$ is defined as a ball of radius $r>0 $ with arbitrary center,  $\mathcal{C}_{N^{-1}}$ is defined as a family of disjoint $\frac1N \times \cdots \times \frac1N $ cubes of the form \begin{equation}\label{e:DefCap}
\theta = 
\Bigl\{ 
\xi\in \bR^d : \xi \in [- \frac1{2N}, \frac1{2N}]^d + c_\theta 
\Bigr\},
\end{equation}
 $c_\theta$ runs over $\frac1N\mathbb{Z}^d \cap [-1,1]^d$, $f_\theta$ is defined by $f_\theta=f\cdot \mathbf{1}_{\theta}$ and $w_{B_{N_1^2}}$ is a weight adapted to the ball $B_{N_1^2}$. 
\end{proposition}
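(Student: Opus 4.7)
The plan is to deduce the stated two-scale decoupling from the standard Bourgain-Demeter $\ell^2$ decoupling for the paraboloid $\mathbb{P}^{d-1}$, via a parabolic rescaling that exploits the small support $B_{N_2/N_1}$. The geometric idea is that restricting $f$ to a codimension-$1$ shell effectively reduces the relevant decoupling surface from $\mathbb{P}^d$ down to $\mathbb{P}^{d-1}$, whose $\ell^2$-decoupling range is precisely $p \le 2(d+1)/(d-1)$, while the further restriction to $B_{N_2/N_1}$ converts the effective decoupling scale from $1/N_1$ to $1/N_2$, producing the $N_2^\varepsilon$ gain.

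First, by rotational symmetry of the sphere of radius $d_*$ and a cap decomposition into $O(1)$ many pieces, I would assume ${\rm supp}\,f$ is concentrated in a fixed cap about a pole $d_* e_d$. This lets me parametrize the shell via $(\xi',r)$ with $\xi' \in \mathbb{R}^{d-1}$, $|\xi'| \lesssim N_2/N_1$, $r \in [d_*-1/N_1,\,d_*+1/N_1]$, and $\xi_d = \sqrt{r^2-|\xi'|^2}$, so that $|\xi|^2 = r^2$ and $\xi_d = d_* - |\xi'|^2/(2d_*) + O((N_2/N_1)^4)$. Then rescale $\eta' = (N_1/N_2)\xi'$ and $\rho = N_1(r-d_*)$, and dualize the output variables via $y = (N_2/N_1) x'$, $s = -(N_2/N_1)^2 x_d/(2d_*)$. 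Under this rescaling, the leading part of the phase of $E_{\mathbb{P}^d} f$ becomes $y\cdot\eta' + s|\eta'|^2$ plus a neutral $\rho$-phase, the partition $\mathcal{C}_{N_1^{-1}}$ pulls back to a $1/N_2$-partition of the unit ball in $\eta'$, and the higher-order Taylor remainders are negligible on each $B_{N_2^2}$-scale output ball, exactly as in the usual Bourgain-Demeter verification for cube decompositions of the paraboloid.

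At this stage I would apply the Bourgain-Demeter $\ell^2$ decoupling theorem for $\mathbb{P}^{d-1}$ at scale $1/N_2$, which holds in the full range $p \le 2(d+1)/(d-1)$ with loss $C_\varepsilon N_2^\varepsilon$ on the natural output ball $B_{N_2^2}$; the passive $\rho$-direction carries a $1$-dimensional phase but no curvature and is absorbed as a trivial product factor by Fubini. To dispatch the actual output ball $B_{N_1^2}$, I would cover it by translates of the $B_{N_2^2}$-scale box (in rescaled coordinates), apply decoupling on each translate, and re-aggregate using Minkowski's inequality in $\ell^{p/2}$ (valid since $p\ge 2$); this assembles the translate-wise $\ell^2$-sums into a single $\ell^2$-sum weighted by $w_{B_{N_1^2}}$ with no extra power of $N_1/N_2$. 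The main obstacle I anticipate is precisely this re-aggregation step together with ensuring that the Taylor remainders remain subordinate on each $B_{N_2^2}$-translate; handling both cleanly is standard but requires careful tracking of the weight $w_{B_{N_1^2}}$ through the rescaling, which is exactly why the weighted formulation is used in place of sharp cutoffs.
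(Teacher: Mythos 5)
First, note that the paper does not prove this proposition at all: it is quoted verbatim as Corollary~3.3 of \cite{MR4665720}, so your attempt has to be judged on its own merits rather than against an in-paper argument. Your overall framing is right in several respects (the operative exponent is the $(d-1)$-dimensional elliptic one, the rescaling of the support $B_{N_2/N_1}$ to unit scale turns the $N_1^{-1}$-cubes into $N_2^{-1}$-caps and produces the $N_2^{\varepsilon}$ loss, and the covering of $B_{N_1^2}$ by $N_2^2$-scale balls with Minkowski re-aggregation is standard). However, two linked steps fail precisely in the regime $N_1^{1/2}\ll N_2\le N_1$, which the proposition must cover (and which the application in Theorem~\ref{mainthm on RmTn} uses). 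First, the Taylor remainders are \emph{not} negligible there: with $|x_d|\le N_1^2$ and $|\xi'|\lesssim N_2/N_1$, the quartic term $x_d|\xi'|^4/(8r^3)$ alone contributes a phase of size $\sim N_2^4/N_1^2\gg 1$, and, equivalently, the sphere bends by $\sim (N_2/N_1)^2\gg 1/N_1$ across the support, so the whole cap cannot be flattened to a single elliptic graph (or the shell to a single slab) by one affine change of variables once $N_2\gg N_1^{1/2}$. Second, and more seriously, the radial variable $\rho$ is not a product direction: its phase $x_d\,r+x_{d+1}r^2$ couples to exactly the output variables that serve as your ``time'' and cylinder variables, and it oscillates by $\sim N_1$ over $B_{N_1^2}$. ``Absorbed as a trivial product factor by Fubini'' therefore has no justification: slicing in $\rho$, decoupling each slice (which is legitimate, since each fixed-$\rho$ slice is a uniformly elliptic $(d-1)$-dimensional graph), and reassembling by Minkowski yields $\int_\rho\bigl(\sum_\theta\|E f_{\theta,\rho}\|_{L^p}^2\bigr)^{1/2}d\rho$, which cannot in general be dominated by $\bigl(\sum_\theta\|E_{\mathbb{P}^d} f_\theta\|_{L^p(w)}^2\bigr)^{1/2}$; and the ``thickened neighborhood'' version of $\mathbb{P}^{d-1}$-decoupling does not rescue this, because in your rescaled coordinates the radial thickening is far larger than the canonical $N_2^{-2}$ when $N_2\gg N_1^{1/2}$.

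What is missing is an intermediate scale, i.e.\ a genuinely two-step argument. One correct route: first decouple, cylindrically in the modulation variable, the $N_1^{-1}$-neighborhood of the sphere in the base into its \emph{canonical} angular caps of size $N_1^{-1/2}$ (this is sphere/elliptic decoupling at the same exponent $p=\tfrac{2(d+1)}{d-1}$, with loss $\max(1,N_2N_1^{-1/2})^{\varepsilon}$); inside each such sector the shell is within $O(1/N_1)$ of a slab, so after one shear the corresponding piece of $\mathbb{P}^d$ is contained in (interval)\,$\times$\,($N_1^{-2}$-neighborhood of an affine copy of $\mathbb{P}^{d-1}$), to which cylindrical Bourgain--Demeter decoupling into the $N_1^{-1}$-caps applies on $B_{N_1^2}$ with loss $\min(N_1^{1/2},N_2)^{\varepsilon}$; the two losses multiply to $N_2^{\varepsilon}$. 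Your one-step reduction is essentially the second half of this argument, and indeed your proof is complete (after replacing ``remainders are negligible'' by the uniform-ellipticity form of Bourgain--Demeter) in the restricted regime $N_2\lesssim N_1^{1/2}$, where the shell is already flat at the relevant scale and the product structure you invoke really exists; but for general $N_2\le N_1$ the intermediate sphere-decoupling step (or an equivalent induction on scales, as in the proof in \cite{MR4665720}) is an essential missing ingredient, not a technicality.
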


The derivation of the corresponding Strichartz estimates on waveguide manifolds (or torus) from the decoupling inequality is now standard; see \cite{BD15,Bourgain13,Barron14} for more details. We will briefly outline the procedure as follows.

\begin{proof}[\textbf{Proof of Theorem \ref{mainthm on RmTn}}]
It suffices to focus on the case $c_*=N_1$ and $p=\frac{2(d+1)}{d-1}$. We first assume that the inequality
\begin{equation}\label{e:decoupling lem}
    \|   e^{it \Delta} \phi   \|_{L^p_{t,x}([-1,1] \times \mathbb{R}^m \times \mathbb{T}^n)} \lesssim_\varepsilon N_2^\varepsilon  \left(\sum_{\vartheta\in \Theta} \| e^{it \Delta} \phi_\vartheta\|_{L^p_{t,x}([-1,1] \times \mathbb{R}^m \times \mathbb{T}^n)}^2\right)^{\frac{1}{2}} 
\end{equation}
holds, where 
$\Theta=\{\vartheta=[-\frac12,\frac12]^m\times \{0\}^n+k: k\in \Z^d \}$
and
$\phi_\vartheta=(\widehat{\phi} \cdot \mathbf{1}_\vartheta)^\vee$. For fixed $\vartheta \in \Theta$,  by interpolation of $L^2$ mass conservation and the trivial $L^1 \to L^\infty$ estimate and using H\"older inequality, we have
$$        \| e^{it \Delta} \phi_\vartheta\|_{L^p_{t,x}([-1,1] \times \mathbb{R}^m \times \mathbb{T}^n)} \lesssim \|\widehat{\phi_\vartheta}\|_{L^{p^\prime}} \lesssim \|\phi_\vartheta\|_{L^2} ,          $$
where $p^\prime$ satisfies $\frac{1}{p}+\frac{1}{p^\prime}=1$.  Combining the above inequality and \eqref{e:decoupling lem}, we complete the proof of Theorem \ref{mainthm on RmTn}. 

Now it suffices to prove \eqref{e:decoupling lem}. By Minkowski inequality(or parallel decoupling lemma), we only need to prove 
\begin{equation}\label{e:decoupling lem2}
\|   e^{it \Delta} \phi   \|_{L^p_{t,x}([-1,1] \times [-N_1,N_1]^m \times \mathbb{T}^n)} \lesssim_\varepsilon N_2^\varepsilon  \left(\sum_{\vartheta\in \Theta} \| e^{it \Delta} \phi_\vartheta\|_{L^p_{t,x}(w)}^2\right)^{\frac{1}{2}}, 
\end{equation}
where $w$ is a smooth weight adapted to $[-1,1] \times [-N_1,N_1]^m$. Let
$$   f(\eta_1, \eta_2)=\sum_{n\in \Z^n} \widehat{\phi}(N_1 \eta_1,n) \delta_{n}(N_1 \eta_2),$$
where $(\eta_1,\eta_2)\in \bR^m\times \bR^n, \delta_n$ denotes the Dirac measure at $n$ on $\bR^n$. Then, using Proposition \ref{Cor:TwoScaleDecoup} to the function $f$ and periodicity, we could check \eqref{e:decoupling lem2} by some computations.
\end{proof}

\begin{remark}
To ensure rigor, our approach requires approximating the Dirac measure through smooth functions, a procedure detailed in \cite{BD15,Bourgain13,Barron14}.
\end{remark}

\subsection{Conclusion for the 2D case}
We now have three cases: $\mathbb{R}^2$, $\mathbb{R} \times \mathbb{T}$ and $\mathbb{T}^2$.

The shell-type Strichartz estimate for $\mathbb{R}^2$ is global-in-time and there is no derivative loss; the shell-type Strichartz estimate for $\mathbb{R} \times \mathbb{T}$ is local-in-time and there must be a derivative loss; the shell-type Strichartz estimate for $\mathbb{T}^2$ is local-in-time and there must be a derivative loss.

We refer to the following table for a brief summary, which completes the characterization of the shell-type Strichartz estimates in two-dimensional product geometries, and provides the analytical backbone for the well-posedness theory.
\begin{table}[h]
\centering
\caption{Comparison of shell-type Strichartz estimates in two-dimensional geometries}
\begin{tabular}{@{}lllll@{}}
\toprule
\textbf{Setting} & \textbf{Global/Local} & \textbf{Derivative Loss} & \textbf{Key Technique} \\ \midrule
$\mathbb{R}^2$ & Global & No & Stein-Tomas \\
$\mathbb{R} \times \mathbb{T}$ & Local & Yes ($\epsilon$) \& inevitable  & Decoupling \& counter-example  \\ 
$\mathbb{T}^2$ & Local & Yes ($\epsilon$) \& inevitable & Decoupling \& counter-example \\
\bottomrule
\end{tabular}
\end{table}
\subsection{Conclusion for the higher dimensional case ($d\geq4$)}
We consider the following four exhaustive cases: $\mathbb{R}^d$, $\mathbb{R}^m \times \mathbb{T}^{d-m}$ ($m\geq 2$), $\mathbb{R} \times \mathbb{T}^{d-1}$, $\mathbb{T}^d$.

The shell-type Strichartz estimates for $\mathbb{R}^m \times \mathbb{T}^n$ ($m\geq 2$) can be obtained, compared to the periodic case (which means the estimates are local-in-time and there is a derivative loss). In particular, the shell-type Strichartz estimates for $\mathbb{R} \times \mathbb{T}^{d-1}$ and $\mathbb{T}^{d}$ must have a derivative loss according to Theorem \ref{counterexample on RTd-1} and \cite{MR4665720}. 

We refer to the following table for a brief summary. We will make a summary of the shell-type Strichartz estimates in different settings and provide more remarks in Section \ref{7}.
\begin{table}[h]
\centering
\caption{Comparison of shell-type Strichartz estimates in higher-dimensional geometries}
\begin{tabular}{@{}lllll@{}}
\toprule
\textbf{Setting} & \textbf{Global/Local} & \textbf{Derivative Loss} & \textbf{Key Technique} \\ \midrule
$\mathbb{R}^d$ & Global & No & Stein-Tomas \\
$\mathbb{R}^m \times \mathbb{T}^{d-m}$ ($m\geq 2$) & Local & Yes ($\epsilon$) \& (inevitable?) & Decoupling method  \\ 
$\mathbb{R} \times \mathbb{T}^{d-1}$ & Local & Yes ($\epsilon$) \& inevitable & Decoupling \& counter-example  \\ 
$\mathbb{T}^d$ & Local & Yes ($\epsilon$) \& inevitable & Decoupling \& counter-example \\
\bottomrule
\end{tabular}
\end{table}

\section{Local Well-Posedness of Zakharov Systems on Waveguides: Proof of Theorem \ref{mainthm2}}\label{6}
In this section, based on the established shell-type Strichartz estimates, we present the proof of Theorem \ref{mainthm2} by demonstrating that the first order system \eqref{Zakharov2} is locally well-posed in $H^s (\T^d) \times H^{s-\frac12}(\T^d)$ if $s > s_0$.\footnote{It is interesting to investigate if one can establish the well-posedness theory when $s=s_0$. We leave it for future studies.}This follows the periodic framework but with necessary modifications for the semi-periodic (waveguide) setting.

In a standard way, we write
\[
\mathcal{J}_S[F](t) = -i \int_0^t e^{i(t-t')\Delta}F(t') d t', \quad 
\mathcal{J}_{W_{\pm}}[G](t) = i \int_0^t e^{\mp i(t-t')\langle \nabla \rangle}  G(t') d t,
\]
and rewrite the system \eqref{Zakharov2} in integral form: 
\begin{align}
u(t) &= e^{it\Delta} u_0  + \frac{1}{2} \mathcal{J}_S[(w + \overline{w})u](t),\\
w(t) & = e^{-it \langle \nabla \rangle} w_0 + \mathcal{J}_{W_{+}} \Bigl[\frac{\Delta}{\langle \nabla \rangle} (u \overline{u}) + \frac{1}{2 \langle \nabla \rangle} (w + \overline{w}) \Bigr](t).
\end{align}

We note that the following bilinear estimates play a crucial role in the proof of Theorem \ref{mainthm2}. The proof is based on the shell-type Strichartz estimate in the waveguide setting (Theorem \ref{mainthm on RmTn}) established in the preceding sections, the Strichartz estimate for wave equations in the waveguide setting, the idea of the frequency decomposition method, and other fundamental estimates.
\begin{proposition}
\label{proposition:KeyBilinearEst}
Let $s_0$ be as defined in~\eqref{assumption:regularity} and $s>s_0$. Then there exist $b>\frac12$ and $\delta >0$ such that 
\begin{align}
& \|u w\|_{X_S^{s,b-1+ \delta}} + \|u \overline{w}\|_{X_S^{s,b-1+\delta}} \lesssim \|u \|_{X_S^{s,b}} \|w\|_{X_{W_{+}}^{s-\frac12,b}}, \label{est:prop1.3-1}\\
& \|u \overline{u}\|_{X_{W_{+}}^{s+\frac12,b-1 + \delta}} \lesssim \|u\|_{X_S^{s,b}}^2.\label{est:prop1.3-2}
\end{align}
\end{proposition}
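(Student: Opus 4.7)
The plan is to follow the framework for the periodic case developed in \cite{MR4665720} (Section 5 therein), with the periodic shell-type Strichartz estimates replaced by their waveguide counterparts from Sections \ref{3}--\ref{5}. First, I pass to the dual formulation: bounding $\|uw\|_{X_S^{s,b-1+\delta}}$ is equivalent, by duality in $X^{s,b}$ and in $L^2_{t,x}$, to estimating the trilinear form
\[
I(u,w,u_0) := \int_{\mathbb{R}\times \mathbb{R}^m \times \mathbb{T}^n} u\, w\, \overline{u_0}\, dt\, dx
\]
against $\|u\|_{X_S^{s,b}}\|w\|_{X_{W_+}^{s-1/2,b}}\|u_0\|_{X_S^{-s,1-b-\delta}}$; the term with $u\overline{w}$ and the estimate \eqref{est:prop1.3-2} admit analogous dual formulations.

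Next, I perform a full Littlewood--Paley decomposition in space and in modulation, writing $u=\sum_{N_1,L_1} P^S_{N_1,L_1}u$, $w=\sum_{N_2,L_2} P^{W_+}_{N_2,L_2}w$, $u_0=\sum_{N_3,L_3} P^S_{N_3,L_3}u_0$. The convolution constraint $k_1+k_2=k_3$, $\tau_1+\tau_2=\tau_3$ forces the two largest of $N_1,N_2,N_3$ to be comparable. The central observation is the Zakharov resonance identity
\[
(\tau_3+\abs{k_3}^2)-(\tau_1+\abs{k_1}^2)-(\tau_2+\langle k_2\rangle) = 2k_1\cdot k_2 + \abs{k_2}^2 - \langle k_2\rangle,
\]
which yields $L_{\max}:=\max(L_1,L_2,L_3)\gtrsim \bigl|2k_1\cdot k_2+\abs{k_2}^2-\langle k_2\rangle\bigr|$. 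When $L_{\max}\ll N_{\max}^2$, this localizes $k_1$ (and hence $k_3$) to a slab of thickness $\sim L_{\max}/N_2$ transverse to $k_2$; intersected with the annulus $\abs{k_1}\sim N_1$, this is precisely the shell geometry to which Corollary \ref{corollary:StrichartzShell} applies, yielding the sharp $L^{2(d+1)/(d-1)}_{t,x}$ bound with at most $\varepsilon$-loss.

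I then carry out the case analysis familiar from the periodic treatment in \cite{MR4665720}: (i) the high-high-high regime $N_1\sim N_2\sim N_3$; (ii) the mixed regimes in which one frequency dominates, further subdivided by which of $L_1,L_2,L_3$ realizes $L_{\max}$. In every case the transference principle (Lemma \ref{lemma:TransferencePrinciple}) converts the linear Strichartz bounds into $X^{s,b}$ inequalities: Corollary \ref{corollary:StrichartzS} handles Schr\"odinger pieces away from the resonant set, Corollary \ref{corollary:StrichartzW} handles the wave piece, and, crucially, Corollary \ref{corollary:StrichartzShell} handles the Schr\"odinger factor when restricted to the resonant shell. Distributing the $N^s$ weights via Bernstein, summing dyadically in $L$ using the $L^{b-1+\delta}$ factor, and closing by Cauchy--Schwarz in $N$ produces the required small power $\delta>0$ precisely when $s>s_0$.

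The main obstacle lies in the most resonant case, where all three modulations are of the minimal admissible size: here the $\varepsilon$-loss in the waveguide shell-type Strichartz estimate (Theorem \ref{mainthm on RmTn}) must be absorbed by the modulation gain, and the threshold $s>s_0$ is exactly what permits this balancing, matching the Sobolev exponents arising in the periodic analysis. For the geometry $\mathbb{R}^2\times\mathbb{T}$ the loss-free Theorem \ref{mainthm on R2T} would in principle allow a sharper threshold, but for uniformity across dimensions we take $s_0$ as inherited from \cite{MR4665720}. The estimate \eqref{est:prop1.3-2} follows from the same case analysis after exchanging the roles of Schr\"odinger and wave factors; the multiplier $\langle\nabla\rangle^{-1}\Delta$ present in the Zakharov nonlinearity (cf.\ \eqref{Zakharov2}) supplies the extra half-derivative required to land in $X^{s+1/2,b-1+\delta}_{W_+}$.
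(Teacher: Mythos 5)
Your overall strategy is the same as the paper's: by duality and dyadic decomposition in frequency and modulation, the proposition is reduced to a localized trilinear estimate (the paper's \eqref{est:prop1.3-3}), which is then handled by the case analysis $N_3\lesssim N_1\sim N_2$ versus $\min(N_1,N_2)\ll N_3$ exactly as in the periodic treatment of \cite{MR4665720}, with the periodic inputs replaced by Corollaries \ref{corollary:StrichartzS}, \ref{corollary:StrichartzW} and \ref{corollary:StrichartzShell} via the transference principle. Your use of the paraboloid--cone resonance identity to explain why the Schr\"odinger factors end up shell-localized is the right mechanism; note only that the slab $\{|2k_1\cdot k_2+|k_2|^2-\langle k_2\rangle|\lesssim L_{\max}\}$ intersected with the annulus is not literally a unit shell — what the identity gives is $\bigl||k_3|^2-|k_1|^2\bigr|\lesssim L_{\max}+\langle k_2\rangle$, so the two Schr\"odinger frequencies lie in correlated unit-width shells, and one still needs the almost-orthogonal decomposition into balls of radius $\sim N_{\min}$ (this is where the $B_{N_2}$ restriction in Corollary \ref{corollary:StrichartzShell} enters and why the $\varepsilon$-loss is measured in $N_{\min}$, to be absorbed by the room in $N_{\min}^{s-\frac12}$ for $s>s_0$, rather than by modulation gain).

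One concrete error to fix: your justification of \eqref{est:prop1.3-2} through the multiplier $\langle\nabla\rangle^{-1}\Delta$ is backwards. That operator does not appear in \eqref{est:prop1.3-2} at all, and it is of order $+1$, so in Lemma \ref{lemma:X^{s,b}Nonlinear} it \emph{costs} a derivative rather than supplying one; this is precisely why the bilinear estimate must already place $u\overline{u}$ in $X^{s+\frac12,b-1+\delta}_{W_+}$ before the multiplier and the Duhamel estimate are applied. The half-derivative gain in \eqref{est:prop1.3-2} has to come from the trilinear estimate itself, i.e.\ from the factor $\bigl(\min(N_1,N_2)/\max(N_1,N_2)\bigr)^{\frac12}N_{\min}^{s-\frac12}$ in \eqref{est:prop1.3-3} (the nonresonant interaction between the paraboloid and the cone), which simultaneously yields \eqref{est:prop1.3-1} and \eqref{est:prop1.3-2} by duality — as you correctly state in the same sentence. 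With that attribution corrected, your proposal matches the intended proof.
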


\begin{proof}
By duality and dyadic decomposition of the space-time Fourier supports of the functions, 
to prove \eqref{est:prop1.3-1} and \eqref{est:prop1.3-2}, it suffices to prove that if $s > s_0$, there exists small $\delta >0$ such that
\begin{equation}\label{est:prop1.3-3}
\begin{split}
&\Bigl| \int u_{1} \overline{v}_{2} w_{3,\pm} dt dx \Bigr|\\
& \lesssim (L_1L_2 L_3)^{\frac12-\delta} \Bigl(\frac{\min(N_1,N_2)}{\max(N_1,N_2)} \Bigr)^{\frac12}N_{\min}^{s-\frac12} \|u_{1}\|_{L_{t,x}^2} \|v_{2}\|_{L_{t,x}^2} \|w_{3,\pm} \|_{L_{t,x}^2},
\end{split}
\end{equation}
where $u_1=  P_{N_1, L_1}^S u$, $v_2 = P_{N_2,L_2}^S v$, $w_{3,\pm} = P_{N_3,L_3}^{W_{\pm}} w$, and $N_{\min}= \min (N_1,N_2,N_3)$. 

Then it suffices to do a case-by-case analysis respectively:

(1) \underline{$N_3 \lesssim N_1\sim N_2$}; \quad (2) \underline{$\min(N_1,N_2) \ll  N_3$}.

The proof follows the same line as the periodic case in Section 4 of \cite{MR4665720} with minor adaptations to the waveguide setting once we have the shell-type Strichartz estimates and the preliminaries in Section \ref{2}, so we omit the proof.
\end{proof}
Now we turn to the proof of Theorem \ref{mainthm2}. Since the proof is quite standard and almost identical to the periodic case (see Section 4 of \cite{MR4665720}), we only present
 a rough sketch of it. The bilinear Strichartz estimate Proposition \ref{proposition:KeyBilinearEst} and the Strichartz estimate for nonlinear wave equation (and NLS\footnote{It is due to Barron \cite{Barron14}.}) on waveguide manifolds are used. 

We first introduce the well-known linear estimates. 
\begin{lemma}
\label{lemma:X^{s,b}Linear}
Let $s, b \in \R$ and $0<T <1$. Then,
\begin{align*}
\|e^{it \Delta} u_0 \|_{X^{s,b}_S(T)} \lesssim \|u_0\|_{H^s(\R^m \times \T^n)},\\
\|e^{-it \langle \nabla \rangle} w_0 \|_{X^{s,b}_{W_+}(T)} \lesssim \|w_0\|_{H^s(\R^m \times \T^n)}.
\end{align*}
\end{lemma}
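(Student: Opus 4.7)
The plan is to prove both estimates by the classical Bourgain extension argument, which reduces the $X^{s,b}$ bound of a free solution to a one-dimensional integral in the modulation variable. Since $0 < T < 1$, we fix once and for all a smooth cutoff $\eta_0 \in C_c^\infty(\R)$ with $\eta_0 \equiv 1$ on $[0,1]$ and $\operatorname{supp}\eta_0 \subset (-2,2)$, and we use the extension $U(t) = \eta_0(t) e^{it\Delta} u_0$ of $e^{it\Delta}u_0$ from $[0,T]$ to $\R$ (respectively $W(t) = \eta_0(t) e^{-it\langle \nabla\rangle} w_0$ for the wave piece). The resulting constant will then be independent of $T$.

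My first step is to recall the standard equivalence
\[
\|u\|_{X^{s,b}_S}^2 \;\sim\; \int_{\R \times (\R^m\times \Z^n)} (1+|k|)^{2s} (1+|\tau+|k|^2|)^{2b} |\widetilde{u}(\tau,k)|^2 \, d\tau \, dk,
\]
which follows by collapsing the dyadic sums in the definition of $X^{s,b}_S$: summing $\sum_L L^{2b} \eta_L(\tau+|k|^2)^2 \sim (1+|\tau+|k|^2|)^{2b}$ and similarly in $N$. The corresponding statement with phase $\tau+\langle k\rangle$ holds for $X^{s,b}_{W_+}$. Then a direct computation gives
\[
\widetilde{U}(\tau, k) = \hat{\eta_0}\!\bigl(\tau + |k|^2\bigr)\, \hat{u_0}(k),
\]
so the modulation weight $(1+|\tau+|k|^2|)^{2b}$ decouples completely from the spatial weight $(1+|k|)^{2s}|\hat{u_0}(k)|^2$.

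Carrying out the resulting product, I obtain
\[
\|U\|_{X^{s,b}_S}^2 \;\lesssim\; \Bigl(\int_\R (1+|\sigma|)^{2b} |\hat{\eta_0}(\sigma)|^2 \, d\sigma\Bigr) \cdot \|u_0\|_{H^s(\R^m\times\T^n)}^2 \;\lesssim\; \|\eta_0\|_{H^b(\R)}^2 \|u_0\|_{H^s}^2,
\]
which is finite (with a $T$-independent constant) since $\eta_0$ is Schwartz and hence lies in every $H^b(\R)$. Because $U$ agrees with $e^{it\Delta}u_0$ on $[0,T]$, the definition of the restricted norm $X^{s,b}_S(T)$ as an infimum over extensions yields the first claimed bound. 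The second estimate is identical after replacing the Schr\"odinger symbol $\tau + |k|^2$ by the half-wave symbol $\tau + \langle k \rangle$, and the same computation with $W$ in place of $U$ produces the bound on $X^{s,b}_{W_+}(T)$.

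There is really no substantive obstacle: the argument is the verbatim waveguide analogue of the classical Euclidean and periodic cases, since the only input that depends on the underlying manifold is Plancherel on $\R^m \times \T^n$, which holds with the mixed continuous/discrete Fourier transform $\mathcal{F}_{x,y}$ introduced in Section~\ref{2}. The only point worth double-checking is the equivalence of the dyadic-sum definition of $X^{s,b}$ with the integral form, but this is a one-line consequence of the partition-of-unity property of $\{\eta_L\}$ and $\{\eta_N\}$ from the Littlewood--Paley definition.
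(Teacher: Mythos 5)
Your proof is correct, and since the paper states this lemma without proof as a well-known fact (citing the standard $X^{s,b}$ theory, e.g.\ Tao's book), your classical cutoff-extension argument — multiplying by a time cutoff equal to $1$ on $[0,1]$, computing $\widetilde{U}(\tau,k)=\hat{\eta}_0(\tau+|k|^2)\hat{u}_0(k)$, and using the equivalence of the dyadic and integral forms of the norm so that the modulation factor decouples into $\|\eta_0\|_{H^b(\R)}$ — is exactly the standard proof the paper implicitly relies on, with the $T$-independence handled correctly via $0<T<1$ and the infimum definition of $X^{s,b}_S(T)$; the wave case follows verbatim with $\tau+\langle k\rangle$ in place of $\tau+|k|^2$.
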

\begin{remark}
We refer to Barron \cite{Barron14} for Strichartz estimates in waveguide setting. Locally (in time), it is the same as the periodic case. See also the references therein for previous related works.    
\end{remark}
Next, we state the standard estimates for handling the Duhamel terms.  
\begin{lemma}
\label{lemma:X^{s,b}Duhamel}
Let $s \in \R$, $b>\frac12$, $0<T <1$, $\delta >0$ and $\psi \in C_0^{\infty}(\R)$ satisfy $\psi = 1$ on $[-1,1]$ and $\textmd{supp } \psi \subset (-2 , 2)$. 
Define $\psi_T(t) = \psi(\frac{t}{T})$. Then,
\begin{align*}
\| \psi_T \mathcal{J}_S[F] \|_{X^{s,b}_S} & \lesssim T^{\delta} \| F\|_{X_S^{s,b-1+\delta}},\\
\| \psi_T \mathcal{J}_{W_{+}}[G] \|_{X^{s,b}_{W_{+}}} & \lesssim T^{\delta} \|  G\|_{X_{W_{+}}^{s,b-1+\delta}}.
\end{align*}
\end{lemma}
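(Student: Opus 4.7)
The plan is to reduce both inequalities to a single scalar estimate in the time variable, and then prove that scalar estimate by a low/high temporal-frequency decomposition. I focus on the Schr\"odinger case; the wave estimate is identical once the phase $\tau+|k|^2$ is replaced by $\tau+\langle k\rangle$.

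First I would peel off the linear propagator. Setting $G(t,x):=e^{-it\Delta}F(t,x)$, Plancherel on the space-time Fourier side gives
\[
\|F\|_{X_S^{s,b-1+\delta}} = \|\langle\nabla\rangle^s G\|_{H^{b-1+\delta}_t L^2_x},
\]
while from $\mathcal{J}_S[F](t)=-ie^{it\Delta}\int_0^t G(t')dt'$ one has
\[
\|\psi_T\mathcal{J}_S[F]\|_{X_S^{s,b}} = \bigl\|\langle\nabla\rangle^s\,\psi_T(t)\textstyle\int_0^t G(t')\,dt'\bigr\|_{H^b_t L^2_x}.
\]
After commuting $\langle\nabla\rangle^s$ through the time integration and applying Minkowski's inequality, the problem reduces to the scalar estimate
\[
\bigl\|\psi_T(t)\textstyle\int_0^t g(t')\,dt'\bigr\|_{H^b_t(\mathbb{R})} \lesssim T^\delta \|g\|_{H^{b-1+\delta}_t(\mathbb{R})}
\]
uniformly in $g\in\mathcal{S}(\mathbb{R})$.

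Second I would prove this scalar inequality. Writing $g(t)=\int_{\mathbb{R}} e^{it\tau}\hat g(\tau)\,d\tau$ gives
\[
\psi_T(t)\int_0^t g(t')\,dt' = \int_{\mathbb{R}}\psi_T(t)\,\frac{e^{it\tau}-1}{i\tau}\,\hat g(\tau)\,d\tau,
\]
and I would split the $\tau$-integral at $|\tau|\sim 1/T$. On the low-frequency region $|\tau|\le 1/T$, the Taylor expansion $(e^{it\tau}-1)/(i\tau)=t\sum_{n\geq 0}(it\tau)^n/(n+1)!$ turns the contribution into a rapidly convergent series whose $n$-th term is of the form $(it)^{n+1}c_n/(n+1)!$ with $c_n=\int_{|\tau|\le 1/T}\tau^n\hat g(\tau)\,d\tau$. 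Scaling gives $\|\psi_T(t)t^{n+1}\|_{H^b_t}\lesssim T^{n+3/2-b}$, while Cauchy-Schwarz against the weight $\langle\tau\rangle^{b-1+\delta}$ (valid since $b-1+\delta<1/2$) yields $|c_n|\lesssim T^{b-3/2-n+\delta}\|g\|_{H^{b-1+\delta}}$; the product is $T^\delta$, summable in $n$ thanks to the $(n+1)!$ denominators. On the high-frequency region $|\tau|>1/T$, the pointwise bound $|(e^{it\tau}-1)/(i\tau)|\lesssim |\tau|^{-1}$ together with $\|\widehat{\psi_T}\|_{L^1}\lesssim 1$ (so that multiplication by $\psi_T$ is bounded on $H^b_t$ uniformly in $T$) converts $|\tau|^{-1}$ into the gain $\langle\tau\rangle^{-1}\lesssim T^\delta \langle\tau\rangle^{-1+\delta}$ on this range, absorbing the one-derivative $H^{b-1+\delta}\to H^b$ gap; the constant-in-$t$ piece $\psi_T(t)\int_{|\tau|>1/T}\hat g(\tau)/(i\tau)\,d\tau$ is handled separately, combining $\|\psi_T\|_{H^b_t}\lesssim T^{1/2-b}$ with Cauchy-Schwarz to again yield a factor of $T^\delta$.

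The main obstacle is the careful bookkeeping of $T$-scalings across the Taylor tail and the two pieces of the high-frequency term, keeping $b$ slightly above $1/2$ and $\delta$ small enough that $b-1+\delta<1/2$. These are standard technical points, and the whole argument is the classical one for $X^{s,b}$ Duhamel estimates (see, e.g., \cite{tao2006nonlinear}); the routine details may be omitted in the final write-up.
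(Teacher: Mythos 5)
The paper itself does not prove this lemma: it is quoted as a standard $X^{s,b}$ Duhamel estimate, so there is no "paper proof" to match, and your write-up is the classical Ginibre--Tsutsumi--Velo/Tao argument in exactly the expected form: conjugate by the free group to reduce to the scalar bound $\|\psi_T\int_0^t g\,dt'\|_{H^b_t}\lesssim T^\delta\|g\|_{H^{b-1+\delta}_t}$, split the temporal frequency at $|\tau|\sim 1/T$, Taylor-expand the low-frequency part, and write the high-frequency part as an oscillatory term plus a constant-in-$t$ term. The reduction, the low-frequency series, and the constant-in-$t$ term (where $\|\psi_T\|_{H^b}\sim T^{1/2-b}$ combines with Cauchy--Schwarz over $|\tau|>1/T$ to give $T^{\delta}$) are all correct; note also that the restriction $b-1+\delta<\frac12$ you impose is genuinely needed (with arbitrary $\delta>0$ the stated inequality fails, e.g.\ for $g$ with $\widehat g$ concentrated near $\tau=0$), so the lemma must be read with $\delta$ small, as in its use in Section 6.

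There is, however, one step whose justification would fail as written: the claim that $\|\widehat{\psi_T}\|_{L^1}\lesssim 1$ makes multiplication by $\psi_T$ bounded on $H^b_t$ uniformly in $T$. An $L^1$ bound on $\widehat{\psi_T}$ only gives uniform boundedness on $L^2_t$; on $H^b_t$ with $b>\frac12$ the corresponding multiplier bound costs $\|\langle\tau\rangle^b\widehat{\psi_T}\|_{L^1}\sim T^{-b}$, and the loss is real: for a fixed smooth bump $u$ with $u(0)=1$ one has $\psi_T u\approx\psi_T$, so $\|\psi_T u\|_{H^b}\sim T^{1/2-b}\to\infty$ while $\|u\|_{H^b}\sim1$. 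Your conclusion is nonetheless true at the point where you use it, because the function being multiplied, $h$ with $\widehat h(\tau)=\mathbf{1}_{\{|\tau|>1/T\}}\widehat g(\tau)/(i\tau)$, has Fourier support in $\{|\tau|\ge 1/T\}$: for $|\sigma|\ge 1/T$ one has $\langle\tau\rangle\lesssim\langle\sigma\rangle\langle T(\tau-\sigma)\rangle$, hence $\|\psi_T h\|_{H^b}\lesssim\|\langle T\cdot\rangle^b\,\widehat{\psi_T}\|_{L^1}\,\|h\|_{H^b}\lesssim\|h\|_{H^b}$ uniformly in $T$, since $\int\langle T\tau\rangle^b\,T|\hat\psi(T\tau)|\,d\tau=\int\langle\sigma\rangle^b|\hat\psi(\sigma)|\,d\sigma$ is $T$-independent. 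Replace the general multiplier claim by this high-frequency-restricted version (or estimate $\psi_T h$ by splitting the output frequency at $|\tau|\sim 1/T$), and the argument closes; the wave estimate then follows verbatim with $\tau+|k|^2$ replaced by $\tau\pm\langle k\rangle$.
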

By combining Proposition \ref{proposition:KeyBilinearEst} and Lemma \ref{lemma:X^{s,b}Duhamel}, we obtain the following:
\begin{lemma}
\label{lemma:X^{s,b}Nonlinear}
Let $s > s_0$ and $0<T<1$. Then there exists $b >\frac12$ and $\delta>0$ such that
\begin{align}
\label{est:X^{s,b}NonlinearS}
& \|\mathcal{J}_S[(w + \overline{w})u] \|_{X^{s,b}_S(T)} 
\lesssim T^{\delta} \|u \|_{X_S^{s,b}(T)} \|w\|_{X_{W_{+}}^{s-\frac12,b}(T)},\\
\label{est:X^{s,b}NonlinearW}
& \Bigl\|\mathcal{J}_{W_{+}} \Bigl[\frac{\Delta}{\langle \nabla \rangle} (u \overline{u})  \Bigr] \Bigr\|_{X^{s,b}_{W_{+}}(T)} \lesssim T^{\delta} \|u\|_{X_S^{s,b}(T)}^2.
\end{align}
\end{lemma}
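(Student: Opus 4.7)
The plan is to obtain both estimates by composing the linear Duhamel bound (Lemma \ref{lemma:X^{s,b}Duhamel}) with the bilinear estimates (Proposition \ref{proposition:KeyBilinearEst}), after a routine extension step that converts the time-restricted norms on the right-hand side into global $X^{s,b}$ norms. The small power $T^\delta$ is produced entirely by Lemma \ref{lemma:X^{s,b}Duhamel}; the required choice of $b>\frac12$ and $\delta>0$ is precisely the one dictated by Proposition \ref{proposition:KeyBilinearEst}.

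For \eqref{est:X^{s,b}NonlinearS}, first choose extensions $U\in X^{s,b}_S$ and $W\in X^{s-\frac12,b}_{W_+}$ of $u$ and $w$ satisfying
\begin{equation*}
\|U\|_{X^{s,b}_S}\le 2\|u\|_{X^{s,b}_S(T)},\qquad \|W\|_{X^{s-\frac12,b}_{W_+}}\le 2\|w\|_{X^{s-\frac12,b}_{W_+}(T)}.
\end{equation*}
Because $0<T<1$ and $\psi_T\equiv 1$ on $[-T,T]$, the global object $\psi_T\mathcal{J}_S[(W+\overline W)U]$ coincides with $\mathcal{J}_S[(w+\overline w)u]$ on $[0,T]$, so by the definition of the restricted norm it is enough to bound the global quantity $\|\psi_T\mathcal{J}_S[(W+\overline W)U]\|_{X^{s,b}_S}$. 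Applying Lemma \ref{lemma:X^{s,b}Duhamel} converts this to $T^{\delta}\|(W+\overline W)U\|_{X^{s,b-1+\delta}_S}$, and invoking \eqref{est:prop1.3-1} separately on $UW$ and $U\overline W$ yields the right-hand side $T^\delta\|u\|_{X^{s,b}_S(T)}\|w\|_{X^{s-\frac12,b}_{W_+}(T)}$.

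For \eqref{est:X^{s,b}NonlinearW}, the same extension-and-cutoff argument reduces the question to controlling $\|\psi_T\mathcal{J}_{W_+}[\tfrac{\Delta}{\langle\nabla\rangle}(U\overline U)]\|_{X^{s,b}_{W_+}}$ by the wave Duhamel inequality of Lemma \ref{lemma:X^{s,b}Duhamel}. This in turn is bounded by $T^\delta\|\tfrac{\Delta}{\langle\nabla\rangle}(U\overline U)\|_{X^{s,b-1+\delta}_{W_+}}$. A dyadic inspection of the multiplier $-|k|^2/\langle k\rangle$, which is comparable to $\langle k\rangle$ on each Littlewood-Paley block, absorbs the operator $\tfrac{\Delta}{\langle\nabla\rangle}$ into the regularity index, reducing matters to the bound on $\|U\overline U\|_{X^{s+\frac12,b-1+\delta}_{W_+}}$ provided precisely by \eqref{est:prop1.3-2} (the half-derivative margin built into Proposition \ref{proposition:KeyBilinearEst} is exactly what pays for the degree-one symbol here). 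Combining the two inequalities with the choice of $U$ gives the stated bound.

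There is no real analytic obstacle once Proposition \ref{proposition:KeyBilinearEst} and Lemma \ref{lemma:X^{s,b}Duhamel} are in hand; the only care needed is the standard bookkeeping with extensions and the time localiser $\psi_T$, and the verification that the surplus $\tfrac12$-derivative present in \eqref{est:prop1.3-2} is enough to balance the order-one multiplier $\tfrac{\Delta}{\langle\nabla\rangle}$ on the wave side.
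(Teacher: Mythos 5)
Your route is the same as the paper's: the lemma is obtained there exactly by composing Lemma \ref{lemma:X^{s,b}Duhamel} with Proposition \ref{proposition:KeyBilinearEst} (the paper records no further details), and your extension-and-cutoff bookkeeping for the Schr\"odinger part is complete and correct.

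The one point that does not balance is the derivative count on the wave part. The symbol of $\frac{\Delta}{\langle\nabla\rangle}$ is comparable to $\langle k\rangle$ on every Littlewood--Paley block, i.e.\ it costs a \emph{full} derivative; hence, starting from $\|\frac{\Delta}{\langle\nabla\rangle}(U\overline U)\|_{X^{s,b-1+\delta}_{W_+}}$ (which is what the literal left-hand norm $X^{s,b}_{W_+}(T)$ in the second estimate of the lemma forces after Lemma \ref{lemma:X^{s,b}Duhamel}), you are reduced to $\|U\overline U\|_{X^{s+1,b-1+\delta}_{W_+}}$, not to $\|U\overline U\|_{X^{s+\frac12,b-1+\delta}_{W_+}}$. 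Estimate \eqref{est:prop1.3-2} supplies only a half-derivative gain, so your closing claim that this margin ``exactly pays for the degree-one symbol'' is off by $\frac12$ at the level $s$. What does close --- and what the contraction argument in $X^{s,b}_S(T)\times X^{s-\frac12,b}_{W_+}(T)$ actually requires --- is the same chain with $X^{s-\frac12,b}_{W_+}(T)$ on the left: then $\|\frac{\Delta}{\langle\nabla\rangle}(U\overline U)\|_{X^{s-\frac12,b-1+\delta}_{W_+}}\lesssim\|U\overline U\|_{X^{s+\frac12,b-1+\delta}_{W_+}}$ and \eqref{est:prop1.3-2} applies verbatim. So you should either prove the wave estimate with the index $s-\frac12$ (the displayed index $s$ appears to be a slip, since Proposition \ref{proposition:KeyBilinearEst} cannot produce the extra half derivative by this route) or explain where the missing $\frac12$ derivative would come from; apart from this index accounting, no further argument is needed.
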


Now, by applying Lemmas \ref{lemma:X^{s,b}Linear} and \ref{lemma:X^{s,b}Nonlinear} with suitable exponents $T$, $\delta$, we can verify the standard contraction mapping in some ball of suitable 
radius in $X^{s,b}_S(T) \times X^{s-\frac12,b}_{W_+}(T)$ centered at the origin as the periodic case. We omit the details.

\section{Well-Posedness for Supercritical NLS with Restricted Frequency Support}\label{new}
This section investigates how frequency restrictions affect well-posedness of NLS in supercritical regimes. We  prove a stronger version of Proposition \ref{prop:Strichartz2} with full Strichartz range, which allows one to obtain some direct PDE applications. This part has its own interests. 
\subsection{On the strip-type Strichartz estimates}
We first demonstrate the following lemma, which is an analogue of the standard linear dispersive estimate in the restricted setting. We consider strip-restriction ($M>0$ instead of $1$): $$
{\rm supp}\, \widehat{\phi} 
\subset 
\{ \xi \in \mathbb{R}^d : |a\cdot \xi|\le M\}.
$$
\begin{lemma}[Dispersive estimate in the frequency-restricted setting]\label{LEM:decay}
Let $d \ge 2$.
Assume that $a\in \mathbb{R}^d, |a|=1$, and the initial data $\phi\in L^2(\mathbb{R}^d)$ with 
$$
{\rm supp}\, \widehat{\phi} 
\subset 
\{ \xi \in \mathbb{R}^d : |a\cdot \xi|\le M\},
$$
then the estimate
\begin{equation}\label{e:decay1}
\|e^{it \Delta} \phi\|_{L^{\infty}_{x}( \mathbb{R}^d )} \lesssim M  t^{-\frac{d-1}2} \| \phi\|_{L_x^1(\mathbb{R}^d )}   
\end{equation}
holds.   
\end{lemma}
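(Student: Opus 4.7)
\textbf{Proof proposal for Lemma \ref{LEM:decay}.} The plan is to reduce the dispersive estimate to a tensor-product structure that respects the strip restriction. By rotation invariance of the Laplacian (and of the hypothesis on $\widehat{\phi}$), I may assume without loss of generality that $a=e_1=(1,0,\dots,0)$, so that $\mathrm{supp}\,\widehat{\phi}\subset\{\xi\in\mathbb{R}^d:|\xi_1|\le M\}$. In this coordinate system the Fourier multiplier associated with $e^{it\Delta}P_{\mathrm{strip}}$, namely
\[
m(\xi)=e^{-it|\xi|^2}\,\mathbf{1}_{\{|\xi_1|\le M\}}(\xi_1)
=\bigl(e^{-it\xi_1^2}\mathbf{1}_{\{|\xi_1|\le M\}}\bigr)\cdot\prod_{j=2}^{d}e^{-it\xi_j^2},
\]
factors as a tensor product of a symbol in $\xi_1$ and the free $(d-1)$-dimensional Schr\"odinger symbol in $\xi'=(\xi_2,\dots,\xi_d)$.

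Accordingly, writing $y=(y_1,y')$, the Schwartz kernel of $e^{it\Delta}P_{\mathrm{strip}}$ factors as $K_t(y)=K_t^{(1)}(y_1)\,K_t^{(d-1)}(y')$, where
\[
K_t^{(1)}(y_1)=\frac{1}{2\pi}\int_{-M}^{M}e^{iy_1\xi_1-it\xi_1^2}\,d\xi_1,\qquad
K_t^{(d-1)}(y')=(4\pi it)^{-(d-1)/2}e^{i|y'|^2/(4t)}
\]
is the standard free Schr\"odinger kernel on $\mathbb{R}^{d-1}$. For $K_t^{(1)}$ I use only the crude bound by the length of the interval of integration, $|K_t^{(1)}(y_1)|\le M/\pi$, which costs no decay in $t$ (this reflects the absence of dispersion in the $e_1$ direction). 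For $K_t^{(d-1)}$ the classical pointwise bound gives $|K_t^{(d-1)}(y')|\lesssim |t|^{-(d-1)/2}$. Combining these yields
\[
\|K_t\|_{L^\infty(\mathbb{R}^d)}\lesssim M\,|t|^{-(d-1)/2},
\]
and then Young's inequality $\|K_t\ast\phi\|_{L^\infty}\le\|K_t\|_{L^\infty}\|\phi\|_{L^1}$ yields \eqref{e:decay1}.

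To make the tensor-product step rigorous, I would first verify the formula $e^{it\Delta}\phi=K_t\ast\phi$ on the dense subclass of Schwartz functions whose Fourier transform is smooth and compactly supported inside the strip: for such $\phi$ the symbol identity above combined with Fubini is immediately justified (everything is absolutely integrable in the transverse variables up to truncation), and the product decomposition of the kernel follows by evaluating the two resulting Gaussian-type integrals (the $\xi'$ integral via completion of squares and contour shift giving the Fresnel integral, the $\xi_1$ integral trivially bounded). Once the pointwise bound $\|K_t\|_{L^\infty}\lesssim M|t|^{-(d-1)/2}$ is established for Schwartz data, a standard density/approximation argument extends \eqref{e:decay1} to all $\phi\in L^1(\mathbb{R}^d)$ with Fourier support in the strip. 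The only mild technical point, and what I would expect to be the one step requiring some care, is the Fresnel-type computation/justification for the $\xi_1$-kernel (whose symbol is not integrable at infinity but is cut off by $\mathbf{1}_{\{|\xi_1|\le M\}}$, so in fact no oscillatory argument is needed there—the trivial bound suffices). This explains why the ``missing'' factor of $|t|^{-1/2}$ relative to the standard dispersive estimate is exactly replaced by the width $M$ of the strip.
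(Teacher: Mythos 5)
Your proposal is correct and takes essentially the same route as the paper: rotate so that $a=e_1$, exploit the tensor splitting of the phase $e^{it|\xi|^2}=e^{it\xi_1^2}e^{it|\xi'|^2}$, use the trivial $O(M)$ bound coming from the interval $[-M,M]$ in the $\xi_1$-direction, and the $(d-1)$-dimensional dispersive decay $t^{-\frac{d-1}{2}}$ in $\xi'$. The paper carries this out on the Fourier side (Minkowski's inequality followed by the $(d-1)$-dimensional dispersive estimate, then bounding the $\xi_1$-integral by $2M$ times a supremum), whereas you factor the kernel and apply Young's inequality, but these are equivalent implementations of the same argument.
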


\begin{proof}
By the rotation invariance, we may assume that 
\[
{\rm supp}\, \widehat{\phi} 
\subset 
\{ \xi \in \mathbb{R}^d : |\xi_1|\le M\}
\]
where $\xi = (\xi_1, \xi_2, \cdots, \xi_d)$. Then we have
\[
\begin{split}
e^{it \Delta} \phi (x) & = c_d \int_{\R^d} \widehat \phi(\xi) e^{it |\xi|^2 + i x \cdot \xi} d \xi \\
& = c_d \int_{-M}^M \bigg( \int_{\R^{d-1}} \widehat \phi(\xi_1,\xi') e^{it |\xi'|^2 + i x' \cdot \xi'} d \xi' \bigg) e^{it |\xi_1|^2} e^{i x_1 \xi_1} d\xi_1,
\end{split}
\]
where $x = (x_1,x')$ and $\xi = (\xi_1,\xi')$. 
Then, by Minkowski's inequality and dispersive estimate for Schr\"odinger equation in $d-1$ dimensions, we have
\[
\begin{split}
\|e^{it \Delta} \phi (x)\|_{L^\infty_x (\R^d)} & \lesssim \bigg\| \int_{-M}^M \bigg( \int_{\R^{d-1}} \widehat \phi(\xi_1,\xi') e^{it |\xi'|^2 + i x' \cdot \xi'} d \xi' \bigg) e^{it |\xi_1|^2} e^{i x_1 \xi_1} d\xi_1 \bigg\|_{L^\infty_x} \\
& \lesssim \int_{-M}^M \bigg\|  \int_{\R^{d-1}} \widehat \phi(\xi_1,\xi') e^{it |\xi'|^2 + i x' \cdot \xi'} d \xi' \bigg\|_{L^\infty_{x'}} d\xi_1 \\
& \lesssim t^{-\frac{d-1}2} \int_{-M}^M \bigg\|  \int_{\R^{d-1}} \widehat \phi(\xi_1,\xi') e^{i x' \cdot \xi'} d \xi' \bigg\|_{L^1_{x'}} d\xi_1,\\
& = t^{-\frac{d-1}2} \int_{-M}^M \bigg\|  \int_{\R } \phi(x_1,x') e^{i x_1 \cdot \xi_1} d x_1 \bigg\|_{L^1_{x'}} d\xi_1\\
& \le t^{-\frac{d-1}2} 2M \sup_{\xi_1 \in \R} \bigg\|  \int_{\R } \phi(x_1,x') e^{i x_1 \cdot \xi_1} d x_1 \bigg\|_{L^1_{x'}}  \\
& \le  2Mt^{-\frac{d-1}2} \|\phi\|_{L^1_x (\R^d)}.
\end{split}
\]
\end{proof}

We say that the pair $(q,r)$ is sharp $\sigma$-admissible if
\[
2\le q,r\le \infty, \quad \frac1q = \sigma \bigg( \frac12 -\frac1r\bigg), \quad (q,r,\sigma) \neq (2,\infty,1).
\]

With the dispersive decay estimate \eqref{e:decay1},
we conclude the following Strichartz estimates according to Keel-Tao's machinery \cite{keel1998endpoint}.
\begin{theorem}[Strichartz estimate in the frequency-restricted setting]
\label{LEM:Stri}
Let $d\ge 2$ and $\phi$ be as in Lemma \ref{LEM:decay}.
Then, we have
\[
\| e^{it \Delta} \phi \|_{L^q(\R; L_x^r(\R^d))} \lesssim C(M) \|\phi\|_{L^2_x (\R^d)},
\]
where $(q,r)$ is sharp $\frac{d-1}2$-admissible. 
\end{theorem}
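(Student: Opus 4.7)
The plan is to invoke the abstract Keel--Tao Strichartz machinery \cite{keel1998endpoint}, which converts an energy estimate together with an $L^1 \to L^\infty$ dispersive decay of rate $\sigma$ into the full scale of sharp $\sigma$-admissible Strichartz bounds. Both hypotheses are immediate in the present setting: the energy identity $\|e^{it\Delta}\phi\|_{L^2_x} = \|\phi\|_{L^2_x}$ holds for every $t\in\R$ by Plancherel, and Lemma \ref{LEM:decay} furnishes exactly the dispersive bound
\[
\|e^{it\Delta}\phi\|_{L^\infty_x} \lesssim M|t|^{-(d-1)/2}\|\phi\|_{L^1_x}
\]
for all $\phi$ whose Fourier support lies in the strip $\{|a\cdot\xi|\le M\}$. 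Thus the effective dispersive rate is $\sigma = (d-1)/2$, precisely the exponent appearing in the admissibility condition of the theorem.

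Next I would Riesz--Thorin interpolate between these two endpoint bounds to obtain
\[
\|e^{it\Delta}\phi\|_{L^r_x} \lesssim M^{1-2/r}|t|^{-\frac{d-1}{2}(1-2/r)}\|\phi\|_{L^{r'}_x}, \qquad 2\le r \le \infty,
\]
and then feed this into the standard $TT^*$ bilinear interpolation and Hardy--Littlewood--Sobolev fractional integration steps of \cite{keel1998endpoint} to extract the mixed-norm estimate for every sharp $\tfrac{d-1}{2}$-admissible pair $(q,r)$. The constant $C(M)$ consequently takes the form $M^{1-2/r}$ up to a factor depending only on $d$ and $q$. The excluded case $(q,r,\sigma)=(2,\infty,1)$ corresponds to the three-dimensional double endpoint and is ruled out by the admissibility hypothesis, so no separate endpoint analysis is required.

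The argument is essentially routine once the two hypotheses are in hand, and the main conceptual point to verify is that the Fourier-support restriction is compatible with the abstract machinery. The Keel--Tao framework is formulated for a one-parameter family of operators on a fixed function space, while here the data are additionally constrained to have Fourier transform supported in a strip of width $2M$. This is harmless because $e^{it\Delta}$ and its $L^2$-adjoint are Fourier multipliers and hence commute with the spectral projector onto $\{|a\cdot\xi|\le M\}$; both the $TT^*$ dualization and the bilinear interpolation preserve the strip condition, so Lemma \ref{LEM:decay} remains applicable at every stage of the abstract proof. Thus the only potential obstacle is purely bookkeeping, and the theorem follows.
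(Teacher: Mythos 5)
Your proposal is correct and follows essentially the same route as the paper, which likewise obtains the theorem by feeding the dispersive bound of Lemma \ref{LEM:decay} (together with $L^2$ conservation) into the abstract Keel--Tao machinery of \cite{keel1998endpoint}. Your remark on compatibility of the strip restriction with the $TT^*$ argument is the right observation; one can make it fully precise by running Keel--Tao for the truncated propagator $e^{it\Delta}P_{\mathrm{strip}}$, noting that the computation in the proof of Lemma \ref{LEM:decay} in fact bounds $\|e^{it\Delta}P_{\mathrm{strip}}f\|_{L^\infty_x}\lesssim M|t|^{-(d-1)/2}\|f\|_{L^1_x}$ for arbitrary $f\in L^1$, so no $L^1$-boundedness of the sharp projector is needed.
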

\begin{remark}
Strichartz estimate for the strip-restricted data can be compared with standard Strichartz estimate with dimension lower by one.  Analogous Strichartz estimates for the periodic case and the waveguide case and can be obtained respectively, in view of Bernstein inequality.  
\end{remark}
Moreover, we recall the inhomogeneous Strichartz estimates in \cite{foschi2005inhomogeneous}.
\begin{lemma}
\label{LEM:inhom}
Let $1 \le q, \tilde q \le \infty$ and $1 \le  r, \tilde r < \infty$.
We assume the following scaling condition
\[
\frac1q + \frac1{\tilde q} = \frac{d}{2}(1 - \frac1r - \frac1{\tilde r}).
\]
Then we have
\[
\bigg\| \int_0^t e^{i(t-t') \Delta} F(t') dt' \bigg\|_{L_t^q (\R; L^r_x (\R^d))} \lesssim \|F\|_{L_t^{\tilde q'} (\R; L^{\tilde r'}_x (\R^d))},
\]
where $\tilde q'$ and $\tilde r'$ are conjugate exponents of $\tilde q$ and $\tilde r$, respectively.
\end{lemma}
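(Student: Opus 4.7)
The plan is to establish this as a consequence of the standard Foschi-type bilinear argument on $\R^d$: pass to the dual bilinear formulation, apply the pointwise $L^{p'} \to L^p$ dispersive bound in space, and close the time integration via Hardy-Littlewood-Sobolev. No frequency restriction is required here, since the lemma is the classical inhomogeneous Strichartz estimate for the free Schr\"odinger evolution on $\R^d$, and the strip hypothesis from Lemma \ref{LEM:decay} plays no role.

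The first step is to dualize. Testing against $G \in L_t^{q'} L_x^{r'}$, the desired bound becomes
\[
\bigg| \iint_{0 < t' < t} \langle e^{i(t-t')\Delta} F(t'), G(t) \rangle_{L^2_x} \, dt'\, dt \bigg| \less \|F\|_{L_t^{\tilde q'} L_x^{\tilde r'}} \|G\|_{L_t^{q'} L_x^{r'}}.
\]
Next, combining the dispersive estimate $\|e^{is\Delta}\|_{L^1_x \to L^\infty_x} \less |s|^{-d/2}$ with $L^2$ mass conservation and interpolating bilinearly gives the mixed-pair pointwise-in-time bound
\[
|\langle e^{is\Delta} f, g \rangle_{L^2_x}| \less |s|^{-\frac{d}{2}(1 - 1/r - 1/\tilde r)} \|f\|_{L^{\tilde r'}_x} \|g\|_{L^{r'}_x}.
\]
Inserting this into the bilinear form reduces matters to controlling
\[
\iint_{0 < t' < t} \frac{\|F(t')\|_{L^{\tilde r'}_x} \|G(t)\|_{L^{r'}_x}}{|t-t'|^{d(1 - 1/r - 1/\tilde r)/2}} \, dt'\, dt,
\]
which is handled by Hardy-Littlewood-Sobolev in the time variable precisely under the scaling relation $\f{1}{q} + \f{1}{\tilde q} = \f{d}{2}(1 - \f{1}{r} - \f{1}{\tilde r})$ of the statement; the homogeneity matches, and H\"older in space plus HLS in time then closes the bound.

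The main obstacle lies at the endpoints where the HLS kernel is borderline --- specifically when $d(1 - 1/r - 1/\tilde r)/2$ equals $1$, or when one of the time exponents is $2$ --- since then the naive pointwise-in-time dispersive estimate followed by HLS loses a logarithm. In such cases I would invoke the Keel-Tao atomic decomposition together with real interpolation, or equivalently Foschi's dyadic Whitney decomposition of the region $\{t' < t\}$ in $|t-t'|$ combined with bilinear interpolation of the dyadic pieces, to recover the sharp bound. Away from these endpoints, the dispersive$+$HLS route above is direct; the endpoint refinement, which is precisely where Foschi \cite{foschi2005inhomogeneous} goes beyond the sharp-admissible Keel-Tao range, is the subtle part and would require invoking his theorem as a black box rather than reproving it from scratch.
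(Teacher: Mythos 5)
The paper offers no proof of this lemma at all: it is recalled verbatim from Foschi \cite{foschi2005inhomogeneous}, so your closing fallback (``invoke his theorem as a black box'') is in effect the paper's entire argument. The problem is with the self-contained part of your proposal. The claimed mixed-pair pointwise-in-time bound $|\langle e^{is\Delta}f,g\rangle|\lesssim |s|^{-\frac{d}{2}(1-\frac1r-\frac1{\tilde r})}\|f\|_{L^{\tilde r'}}\|g\|_{L^{r'}}$ is false whenever $\tilde r\neq r$. It is equivalent to a fixed-time bound $e^{is\Delta}\colon L^{\tilde r'}\to L^{r}$, and the exact factorization $e^{is\Delta}=c_s M_s D_s\mathcal{F}M_s$ (unimodular multiplications and a dilation) shows that such a bound holds if and only if $r=(\tilde r')'=\tilde r$, by the corresponding rigidity for the Fourier transform; likewise, bilinear interpolation between the available $L^1\times L^1$ and $L^2\times L^2$ estimates only produces the diagonal family of exponents. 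Hence your dispersive-plus-HLS reduction proves only the classical diagonal case $r=\tilde r$ away from the HLS endpoints. The off-diagonal, non-admissible pairs are not an ``endpoint refinement'' --- they are the entire content of Foschi's theorem, and they are precisely what the paper needs: in the proof of Theorem \ref{thm:LWP-restricted-H14} the lemma is applied with $d=2$, $(q,r)=(8,8)$ and $(\tilde q,\tilde r)=(\frac83,\frac83)$, which are unequal and neither is sharp admissible. Foschi's argument goes through time-averaged bilinear estimates on dyadic Whitney pieces of $\{t'<t\}$ built from local homogeneous Strichartz bounds, not through a pointwise-in-time dispersive kernel bound.

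A smaller remark: as stated (with only the scaling relation on the exponents), the lemma is not literally true in full generality; Foschi's theorem imposes additional acceptability/range conditions beyond scaling. That imprecision is inherited from the paper's citation-style statement rather than introduced by you, but a complete proof along your lines would also have to record where those conditions enter.
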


\subsection{Local well-posedness in the frequency-restricted setting}

In this subsection, we establish local well-posedness for a specific NLS model: the quintic NLS in the homogeneous Sobolev space \(\dot{H}^{\frac{1}{8}}(\mathbb{R}^2)\), assuming the initial data satisfies a strip-type frequency restriction. For other NLS models, analogous results can be obtained in a similar way.

We investigate the following quintic Schr\"odinger equation posed on $\R^2$:
\begin{align}\label{NLS}
\begin{cases}
i \partial_t u + \Delta u = |u|^4 u\\
u(0) = \phi 
\end{cases} (t,x) \in \R \times \R^2,
\end{align}
where the restricted initial data $\phi$ satisfies 
\begin{align}\label{rest}
{\rm supp}\, \widehat{\phi} 
\subset 
\{ \xi \in \mathbb{R}^2 : |a \cdot \xi|\le M\},
\end{align}
for a given $a \in \R^2$ such that $|a| = 1$.

We note that the critical space for \eqref{NLS} is $\dot H^{\frac12} (\R^2)$\footnote{In short, we say a NLS model is $\dot{H}^s$-critical if the $\dot{H}^s$-norm is invariant under the scaling symmetry. We refer to \cite{tao2006nonlinear} for the notion of criticality of NLS.}.
In general, it is expected that the initial value problem \eqref{NLS} is well-posedness in critical space or higher regularity spaces, that is, in $ H^s (\R^2)$ for $s \ge \frac12$; but ill-posed otherwise (see \cite{lindblad1995existence,CCT}). In what follows, however, we demonstrate that under the additional restriction on initial condition \eqref{rest}, \eqref{NLS} is well-posed in the supercritical regime.\footnote{To our best knowledge, this result is new, though the setting and the observation are quite natural.} We have the following theorem,

\begin{theorem}[Local well-posedness under frequency restriction]\label{thm:LWP-restricted-H14}
Let \(d = 2\), and consider the quintic NLS
\[
\begin{cases}
i \partial_t u + \Delta u = |u|^4 u, \\
u(0, x) = \phi(x),
\end{cases}
\]
on \(\mathbb{R}^2\). Suppose the initial data \(\phi \in \dot{H}^{\frac{1}{8}}(\mathbb{R}^2)\) satisfies the frequency restriction
\[
\operatorname{supp}(\widehat{\phi}) \subset \{ \xi \in \mathbb{R}^2 : |\xi_1| \le M \},
\]
for some fixed \(M > 0\). Then there exists \(T = T(\|\phi\|_{\dot{H}^{\frac{1}{8}}}) > 0\) such that the equation admits a unique solution \(u \in C([0,T]; \dot{H}^{\frac{1}{8}}(\mathbb{R}^2))\), depending continuously on the initial data.
\end{theorem}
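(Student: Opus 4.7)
The plan is a Picard contraction for the Duhamel formulation in a Strichartz space adapted to the strip-restricted dispersive decay. The decisive point is that Lemma \ref{LEM:decay} and Theorem \ref{LEM:Stri} furnish $e^{it\Delta}\phi$ with a $t^{-1/2}$ (rather than $t^{-1}$) dispersive rate and the corresponding $\tfrac{1}{2}$-admissible Strichartz bounds, so the quintic nonlinearity sees an effective one-dimensional dispersion and is $L^2$-critical in that reduced setting; consequently $\dot H^{1/8}(\R^2)$ becomes \emph{subcritical} for the restricted problem, and a standard subcritical contraction argument ought to close.

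Concretely, I would pick a sharp $\tfrac{1}{2}$-admissible pair $(q_0,r_0)$ with $\tfrac{2}{q_0}+\tfrac{1}{r_0}=\tfrac{1}{2}$ and work in
\[
X_T := C([0,T];\dot H^{1/8}(\R^2)) \cap L^{q_0}_t([0,T];\dot W^{1/8,r_0}_x(\R^2)).
\]
Since the Fourier multiplier $|\nabla|^{1/8}$ preserves strip support, Theorem \ref{LEM:Stri} applied to $|\nabla|^{1/8}\phi$ gives $\|e^{it\Delta}\phi\|_{X_T}\lesssim C(M)\|\phi\|_{\dot H^{1/8}}$. For the Duhamel term $\mathcal{J}[F](t)=\int_0^t e^{i(t-s)\Delta}F(s)\,ds$, I would couple an inhomogeneous Strichartz estimate of Foschi type (either a suitable off-diagonal range of Lemma \ref{LEM:inhom} or a $\tfrac{1}{2}$-admissible analogue obtained by rerunning the Keel--Tao $TT^{*}$ machinery on the dispersive bound of Lemma \ref{LEM:decay}) with the fractional Leibniz rule, H\"older in space-time, and Sobolev embedding, to reach a nonlinear estimate of the shape $\|\mathcal{J}[|u|^4u]\|_{X_T}\lesssim T^{\delta}\|u\|_{X_T}^{5}$ for some $\delta>0$. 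The positive power of $T$ encodes the subcritical gain, and the map $\Phi(u)=e^{it\Delta}\phi-i\mathcal{J}[|u|^4u]$ is then a strict contraction on a closed ball of $X_T$ of radius $\sim C(M)\|\phi\|_{\dot H^{1/8}}$ for $T=T(\|\phi\|_{\dot H^{1/8}},M)$ small enough, with continuous dependence on data following from the same estimates applied to differences.

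The principal obstacle is the matching of exponents: the $\tfrac{1}{2}$-admissible (one-dimensional) pairs that govern the strip-restricted linear flow must be reconciled with the H\"older and Sobolev constraints imposed by the quintic power in two spatial dimensions, and in particular one needs an inhomogeneous Strichartz inequality that is both compatible with $\tfrac{1}{2}$-admissibility and applicable to forcing that need not be strip-restricted (since $|u|^4u$ does not inherit the strip support of $u$, the Duhamel forcing is generic). Once such an inhomogeneous estimate is secured, the remainder is a routine subcritical Picard argument, and the theorem can be viewed as a deterministic counterpart of the random-data well-posedness philosophy alluded to earlier in the paper, with the strip restriction playing the role of the randomization.
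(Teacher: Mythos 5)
Your overall plan (contraction via the strip-restricted Strichartz bounds of Lemma \ref{LEM:decay}/Theorem \ref{LEM:Stri} for the linear flow plus a Foschi-type inhomogeneous estimate for Duhamel) is the same skeleton as the paper's proof, but there is a genuine gap at the point you yourself flag as the ``principal obstacle,'' and the mechanism you propose to close it is wrong. You claim that $\dot H^{1/8}$ is \emph{subcritical} for the restricted problem and that the nonlinear estimate will come with a factor $T^{\delta}$, $\delta>0$, so that a routine subcritical Picard argument closes with $T=T(\|\phi\|_{\dot H^{1/8}})$. In fact $s=\tfrac18$ is exactly the \emph{critical} index of the strip-restricted scheme (it is the value $s=\frac{p(d-1)(d+2)-4(d+1)}{2p(d+2)}$ of Theorem \ref{Thm:well-posedness} with $d=2$, $p=4$): when you run the quintic estimate with sharp $\tfrac12$-admissible exponents, every H\"older/scaling condition is saturated with equality, so no positive power of $T$ survives. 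Concretely, in the paper's scheme the Duhamel term is bounded via Lemma \ref{LEM:inhom} with $(q,r)=(8,8)$, $(\tilde q,\tilde r)=(\tfrac83,\tfrac83)$, where the scaling condition $\frac1q+\frac1{\tilde q}=\frac{d}{2}(1-\frac1r-\frac1{\tilde r})$ holds exactly, and $\||u|^4u\|_{L^{8/5}_{t,x}}=\|u\|_{L^8_{t,x}}^5$ with no slack; smallness must therefore come from the smallness of the linear evolution $\||\nabla|^{1/8}e^{it\Delta}\phi\|_{L^8_tL^4_x([0,T])}\le\eta$ on a short interval (a critical-type argument), not from a $T^{\delta}$ gain. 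As written, your contraction would not close at $s=\tfrac18$.

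The exponent bookkeeping you leave open is resolved in the paper by a specific and simpler scheme than the one you sketch: the contraction is run in the derivative-free norm $L^8_{t,x}$ (together with $L^\infty_t\dot H^{1/8}$), not in $L^{q_0}_t\dot W^{1/8,r_0}_x$. The linear flow is controlled by Theorem \ref{LEM:Stri} with the sharp $\tfrac12$-admissible pair $(q,r)=(8,4)$ applied to $|\nabla|^{1/8}\phi$ (strip support is preserved by both the flow and the multiplier), and then converted to $L^8_x$ by Bernstein in the restricted direction together with one-dimensional Sobolev embedding in the free direction. The Duhamel term is handled by the \emph{unrestricted} two-dimensional Foschi estimate of Lemma \ref{LEM:inhom}, which only requires the scaling relation and no admissibility or frequency localization of the forcing—this is exactly how the issue you raise about $|u|^4u$ not inheriting the strip support is sidestepped—and, since no derivatives are placed on the nonlinearity, no fractional Leibniz rule is needed. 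If you want to salvage your version, you should either raise the regularity strictly above $\tfrac18$ (where a genuine $T^\delta$ gain does appear) or restructure the argument as above, closing the fixed point through smallness of the linear Strichartz norm on $[0,T]$.
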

The proof of Theorem \ref{thm:LWP-restricted-H14} is based on a standard contraction mapping argument in a suitable function space. We give a sketch of the proof as follows. 
\begin{proof}
Let $I=[0,T]$ and $T>0$ small enough such that,
\begin{equation}
    \||\nabla|^{\frac{1}{8}} e^{it\Delta} u_0\|_{L^8_t L^4_x(I\times \R^{2})}\leq \eta.
\end{equation}

Let us define the solution space:
\[
X_T := C_t([0,T]; \dot{H}^{\frac{1}{8}}_x) \cap L^8_t([0,T]; L^8_x).
\]
We equip this space with the norm:
\[
\|u\|_{X_T} := \|u\|_{L^8_t L^8_x}.
\]
We consider the set
\begin{equation}\label{eq:e  set}
\begin{aligned}
E:=\{&u:\ \|u\|_{L_{t}^{\infty}\dot H^{\frac{1}{8}}(I\times \R^{2})}
\leq 
2 \| \phi\|_{\dot H_{x}^{\frac{1}{8}} };\\& \| u\|_{L^8_t L^8_x(I\times \R^{2})}\leq 2\eta.\ \}
\end{aligned}
\end{equation}

According to its Duhamel formulation, let
\[
\Phi(u)(t) := e^{it\Delta} \phi - i \int_0^t e^{i(t - s)\Delta} (|u|^4 u)(s) ds.
\]

Then, by Sobolev inequality, the H\"older, the Bernstein\footnote{Essentially, the Bernstein in 1D is applied since the other direction is restricted.}, Theorem \ref{LEM:Stri} with $(q,r) = (8,4)$ and Lemma \ref{LEM:inhom} with $d = 2$, $(q,r) = (8,8)$, and $(\tilde q, \tilde r) = (\frac83, \frac83)$, one can show $\Phi(u)$ is from $E$ to $E$ via
\begin{equation}
    \|\Phi(u)\|_{L_{t}^{\infty}\dot H^{\frac{1}{8}}(I\times \R^{2})}
\leq \| \phi\|_{\dot H_{x}^{\frac{1}{8}} }+ \| u\|_{\dot H_{x}^{\frac{1}{8}} } \eta^4 \leq 
2 \| \phi\|_{\dot H_{x}^{\frac{1}{8}} },
\end{equation}
and
\begin{equation}
   \| \Phi(u)\|_{L^8_{t,x}(I\times \R^{2})}\leq \eta+\eta^5 \leq 2\eta
\end{equation}
since
\begin{align}
\begin{split}
\|u\|_{L^{8}_{t,x} (\R \times \R^2)} & \le \| e^{it \Delta} \phi \|_{L^8(\R; L_x^8(\R^2))} + \bigg\| \int_0^t e^{i(t-t') \Delta}  |u|^4 u (t') dt' \bigg\|_{L_t^8 (\R; L^8_x (\R^2))}\\
& \lesssim_M \| |\nabla|^{\frac18} e^{it \Delta} \phi \|_{L^8(\R; L_x^4(\R^2))} + \big\|  |u|^4 u  \big\|_{L_t^\frac85 (\R; L^\frac85_x (\R^2))}\\
& \lesssim_M \eta + \| u\|_{L^8_{t,x} (\R\times \R^2)}^5.
\end{split}
\end{align}
Moreover, $\Phi(u)$ is a contraction mapping,
\begin{equation}
   \|\Phi(u)-\Phi(v)\|_{L^8_{t,x}(I\times \R^{2})}\leq \|u-v\|_{L^8_{t,x}(I\times \R^{2})} C\eta^{4}.
\end{equation}

Thus, for small enough \(T\), the map \(\Phi\) is a contraction on a ball in \(X_T\), yielding a unique local solution by Banach fixed point theorem.
\end{proof}

Furthermore, this argument can be extended to general NLS models in a standard way with suitable modifications. We proceed to investigate the general case as below.
\begin{align}\label{general NLS}
\begin{cases}
i \partial_t u + \Delta u = |u|^p u\\
u(0) = \phi 
\end{cases} (t,x) \in \R \times \R^d,
\end{align}
where the restricted initial data $\phi$ satisfies 
\begin{align}\label{general rest}
{\rm supp}\, \widehat{\phi} 
\subset 
\{ \xi \in \mathbb{R}^d : |a \cdot \xi|\le M\},
\end{align}
for a given $a \in \R^d$ such that $|a| = 1$.

We now formulate the well-posedness theory for \eqref{general NLS} under a localized frequency assumption \eqref{general rest} on the initial data. This setting allows us to establish local well-posedness in certain supercritical sense, as stated in the following theorem.

\begin{theorem}
\label{Thm:well-posedness}
Consider \eqref{general NLS}. Let $d\geq 2$, $s= \frac{p(d-1)(d+2)-4(d+1)}{2p(d+2)}$ and the nonlinear exponent $p=\frac{4}{d-2s_c}$, where the critical regularity is $s_{c}=\frac{d}{2} - \frac{2}{p}\geq 0$. Assume that $\phi\in \dot H_{x}^{s}( \R^{d})$ satisfies the restriction condition \eqref{general rest}. There exists a constant $\eta_{0}=\eta_{0}(d)>0$ such that if $0<\eta\leq \eta_{0}$ and $I$ is a compact interval containing zero such that
\begin{equation}\label{small condition}
\| |\nabla|^{s} e^{it \Delta} \phi \|_{L_{t,x}^q (I \times \R^d)} \leq \eta,  
\end{equation}
then the Cauchy problem \eqref{general NLS} is locally well-posed in the supercritical sense since $s<s_c$.
\end{theorem}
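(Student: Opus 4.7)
The plan is to solve \eqref{general NLS} by a Banach fixed-point argument applied to the Duhamel map
\[
\Phi(u)(t) = e^{it\Delta}\phi - i\int_0^t e^{i(t-s)\Delta}(|u|^p u)(s)\,ds,
\]
mirroring the structure used in the proof of Theorem \ref{thm:LWP-restricted-H14} but with $d$, $p$ and $s$ left general. The central observation is that the strip-restricted linear flow $e^{it\Delta}\phi$ inherits the frequency support of $\phi$, so we may apply Theorem \ref{LEM:Stri} (the enhanced sharp $\frac{d-1}{2}$-admissible Strichartz) to the linear piece, while the Duhamel piece, for which $|u|^p u$ need not be strip-restricted, will be handled by the standard inhomogeneous estimate of Lemma \ref{LEM:inhom}. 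This dichotomy is the whole source of the supercritical gain.

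First I would set up the iteration space
\[
E_T = \big\{ u : \ \|u\|_{L^\infty_t \dot H^s_x(I \times \R^d)} \le 2 \|\phi\|_{\dot H^s_x}, \ \ \||\nabla|^s u\|_{L^q_{t,x}(I\times\R^d)} \le 2\eta \big\},
\]
with $I = [0,T]$ and $q$ as in the smallness hypothesis \eqref{small condition}. The linear piece contributes $\eta$ to the $L^q_{t,x}$ norm by assumption, and $\|\phi\|_{\dot H^s}$ to the $L^\infty_t \dot H^s$ norm by mass conservation. For the nonlinear piece I would apply Lemma \ref{LEM:inhom} combined with the fractional Leibniz rule
\[
\||\nabla|^s(|u|^p u)\|_{L^{\tilde q'}_t L^{\tilde r'}_x} \lesssim \|u\|_{L^{a}_t L^{b}_x}^{p} \||\nabla|^s u\|_{L^q_{t,x}},
\]
where the auxiliary pair $(a,b)$ is connected to $(q, \tilde q', \tilde r')$ via Hölder and the scaling constraint of Lemma \ref{LEM:inhom}. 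Controlling $\|u\|_{L^a_t L^b_x}$ by $\||\nabla|^s u\|_{L^q_{t,x}}$ is where the strip restriction is exploited crucially: in the single direction $a\cdot x$ one invokes Bernstein's inequality against the strip width $M$, while in the remaining $d-1$ directions one uses standard Sobolev embedding. The specific value $s = \tfrac{p(d-1)(d+2)-4(d+1)}{2p(d+2)}$ in the statement is precisely the one for which this Hölder--Sobolev--Bernstein chain closes and the resulting bound $\|\Phi(u)\|_{L^q_{t,x}} \lesssim \eta + C(M)\eta^{p+1}$ keeps $\Phi$ mapping $E_T$ into itself for $\eta \le \eta_0$.

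The contraction step is then routine: from the pointwise estimate $\big||u|^p u - |v|^p v\big| \lesssim (|u|^p+|v|^p)|u-v|$ and the same inhomogeneous Strichartz--Hölder combination one obtains
\[
\|\Phi(u) - \Phi(v)\|_{L^q_{t,x}} \le C(M)\,\eta^p \|u - v\|_{L^q_{t,x}},
\]
which is $\le \tfrac12 \|u-v\|_{L^q_{t,x}}$ once $\eta_0$ is taken small enough. An analogous bound in $L^\infty_t \dot H^s$ via Strichartz duality closes the setup, and Banach's fixed-point theorem produces the unique local solution on $E_T$ with continuous dependence on the data.

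The principal obstacle I expect lies in the exponent calculus: three compatibility conditions -- the sharp $\frac{d-1}{2}$-admissibility of Theorem \ref{LEM:Stri}, the $\frac{d}{2}$-admissible scaling in Lemma \ref{LEM:inhom}, and the Hölder--Bernstein matching for $|u|^p u$ -- must all hold simultaneously for a single choice of $(q,\tilde q, \tilde r, a, b)$. It is at this stage that the stated formula for $s$ becomes forced rather than chosen, and I expect the bulk of the verification work to be devoted to checking that this linear system admits a valid solution across the full range $d\ge 2$, $p\ge 4/d$, and that the resulting (in general non-diagonal) Strichartz pairs fall inside admissible ranges, exactly as happened with the pairs $(8,4)$ and $(8,8)$ used in the proof of Theorem \ref{thm:LWP-restricted-H14}.
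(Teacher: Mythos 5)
Your overall architecture (contraction via the Duhamel map, the restricted Strichartz estimate of Theorem \ref{LEM:Stri} for the linear part, Foschi's Lemma \ref{LEM:inhom} for the inhomogeneous part) matches the paper's intent, but there is a genuine gap at the one step where you exploit the strip restriction. You propose to bound $\|u\|_{L^a_tL^b_x}$ by $\||\nabla|^s u\|_{L^q_{t,x}}$ by ``invoking Bernstein's inequality against the strip width $M$'' in the restricted direction. That is legitimate only for functions whose Fourier support actually lies in the strip. The free evolution $e^{it\Delta}\phi$ has this property, but the Picard iterates and the solution $u$ do not: the nonlinearity $|u|^p u$ spreads the frequency support (for non-integer $p$ to all of $\R^d$, and even for integer $p$ the support in the restricted direction grows by a factor $p+1$ under each application of $\Phi$, so no fixed strip class is preserved by the map). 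Since $s<s_c$, there is no way to recover $\|u\|_{L^a_t L^b_x}$ from $\||\nabla|^s u\|_{L^q_{t,x}}$ by Sobolev embedding alone --- that would amount to solving the supercritical problem with no restriction at all --- so this step cannot be repaired in place, and your self-mapping and contraction estimates do not close as written.

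The paper's (sketched) proof, i.e.\ the direct generalization of Theorem \ref{thm:LWP-restricted-H14}, is organized precisely to avoid this: the contraction is run in the \emph{derivative-free} diagonal norm $\|u\|_{L^{q}_{t,x}}$ with $q=\frac{p(d+2)}{2}$ (so $q=8$ when $d=2$, $p=4$), kept in the iteration space alongside $L^\infty_t\dot H^s_x$. Then the nonlinear term requires only H\"older, $\||u|^p u\|_{L^{q/(p+1)}_{t,x}}=\|u\|^{p+1}_{L^q_{t,x}}$, combined with Lemma \ref{LEM:inhom} for the scaling-compatible pairs $(q,q)$ and the dual of $(q/(p+1),q/(p+1))$; no frequency information about $u$ is used there, and no fractional Leibniz rule is needed in the contraction norm. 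The strip restriction enters only through the linear term: pointwise in $t$ one has $\|e^{it\Delta}\phi\|_{L^q_{x}}\lesssim_M\||\nabla|^s e^{it\Delta}\phi\|_{L^r_x}$ by Bernstein in the restricted direction (valid because $e^{it\Delta}$ preserves ${\rm supp}\,\widehat\phi$) and the Sobolev embedding $\dot W^{s,r}(\R^{d-1})\hookrightarrow L^q(\R^{d-1})$ in the remaining variables, with $(q,r)$ the sharp $\frac{d-1}{2}$-admissible pair of Theorem \ref{LEM:Stri}; this chain, together with the smallness hypothesis \eqref{small condition}, controls the linear contribution by $\eta$ and is exactly what forces $s=\frac{d-1}{2}-\frac{d+1}{q}=\frac{p(d-1)(d+2)-4(d+1)}{2p(d+2)}$. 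So your exponent bookkeeping for $s$ is in the right spirit, but the restriction-based gain must be spent on the free evolution only, not on the unknown solution.
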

Since the proof follows as the 2D quintic NLS case (Theorem \ref{thm:LWP-restricted-H14}), we omit it. Here are a few more remarks on this research line.
\begin{remark}[On the definiteness of supercriticality]
There exists initial data $\phi$ satisfying the assumptions in \eqref{general rest} with $\phi \in \dot H^s$ yet $\phi \notin \dot H^{s_c}$,. A constructive example can be given as follows: 

Fix $a = (0,\ldots,0,1)$ and choose an intermediate regularity exponent $s_0 \in (s, s_c)$. Define the Fourier transform
\begin{equation*}
    \hat{\phi}(\xi) = \begin{cases}
        (1+|\xi_1|)^{-s_0-\frac12}, & \text{if } |\xi_k| \leq 1\ \text{for all } 2 \leq k \leq d, \\
        0, & \text{otherwise}.
    \end{cases}
\end{equation*}
This construction ensures $\|\phi\|_{\dot H^s} < \infty$ but $\|\phi\|_{\dot H^{s_c}} = \infty$ due to the anisotropic decay rate $s_0$ in the $\xi_1$-direction combined with compact support in other frequency variables.

\end{remark}

\begin{remark}[On the regularity improvement]
A direct computation shows that the inequality
$\frac{d}{2}-\frac{2}{p}>\frac{p(d-1)(d+2)-4(d+1)}{2p(d+2)}$ 
holds for all \( d \geq 2 \) and \( p > 0 \). This indicates that the regularity required for well-posedness is lowered from \( \frac{d}{2} - \frac{2}{p} \) (standard critical regularity) to \( \frac{p(d-1)(d+2)-4(d+1)}{2p(d+2)} \), which in turn implies that well-posedness can be achieved even for certain supercritical cases of the NLS.
\end{remark}

\begin{remark}[On a natural generalization]
One can generalize Theorem \ref{Thm:well-posedness} in a natural way: consider $d$-dimensional NLS \eqref{general NLS} with multiple directions ($m$-dimension) restricted in frequency where $d\geq3, m\geq 2,d>m$. Analogous result can be obtained. In fact, the more directions are restricted, the better regularity improvement is expected.
\end{remark}

\begin{remark}[On small data global well-posedness]
Via standard methods (see \cite{tao2006nonlinear,dodson2019defocusing,killip2013nonlinear}), one can also obtain small data global well-posedness results for supercritical NLS as concerned in Theorem \ref{Thm:well-posedness} so we omit it. The large data long time dynamics remains open and requires other ingredients.   
\end{remark}
\begin{remark}[On the waveguide case]
Such results (well-posedness results for supercritical NLS) can be easily extended to the waveguide case with suitable modifications. As in \cite{tzvetkov2012small}, one can use Theorem \ref{LEM:Stri} (Euclidean Strichartz estimates) as a blackbox to obtain Strichartz estimates in the waveguide setting, which lead well-posedness results for supercritical NLS on waveguide manifolds with regularity improvements.   
\end{remark}
\begin{remark}[On the wave equations]
Such results are also expected to be extended to the nonlinear wave equations (even other nonlinear dispersive equations) with suitable modifications. The road map is clear: dispersive estimate in lower dimensional space implies Strichartz estimate in lower dimensional space, which gives the well-posedness theory for supercritical models. We refer to \cite{keel1998endpoint} (see Theorem 1.2 and Theorem 10.1).
\end{remark}

\begin{remark}[On the optimality (well/ill-posedness)]
It is natural to ask if the well-posedness result Theorem \ref{Thm:well-posedness} is optimal in terms of the Sobolev regularity? In other others, can one prove ill-posedness for any strip-restricted data with lower Sobolev regularity? 

In fact, it is not clear for this moment. We still use the 2D quintic NLS as an example as in Theorem \ref{thm:LWP-restricted-H14}. We point out that, following the standard scheme \cite{CCT} (norm inflation), one can show ill-posedness for \eqref{NLS} in $H^{s}$ for $s<0$ by reducing one dimension. There is still an obvious gap between $0$ and $\frac{1}{8}$.    
We leave it for future studies.
\end{remark}

\section{Summary and Open Problems}\label{7}
In this section, we make a summary and present some further remarks on the research line of restricted-type Strichartz estimates in different settings and the applications.

\subsection{Summary of the shell-type Strichartz estimates}

We begin with a concise summary of the shell-type Strichartz estimates established in this work.

1. In general, the analogous shell-type Strichartz estimates for $\mathbb{R}^m \times \mathbb{T}^n$ can be obtained compared to the periodic case (see \cite{MR4665720}). Except for the case $\mathbb{R}^2 \times \mathbb{T}$, there is $\epsilon$-derivative loss and the estimates are local-in-time. (See Section \ref{5}.)

2. In particular, on one hand, the shell-type Strichartz estimates for $\mathbb{R} \times \mathbb{T}^{d-1}$  must have a derivative loss (there are counter-examples to justify this point); on the other hand, the shell-type Strichartz estimate for $\mathbb{R}^2 \times \mathbb{T}$ is global-in-time and there is no derivative loss. One can compare this case with the $\mathbb{R}^2 \times \mathbb{T}$ case and the $\mathbb{T}^3$ case. (See Section \ref{4}.)

3. Finally, the shell-type Strichartz estimates for the Euclidean case are global-in-time, and there is no derivative loss. (See Section \ref{3}.)

For convenience, we refer to the following table for a brief summary based on the current paper and \cite{MR4665720}.

\begin{table}[htbp]
\centering
\caption{Summary of the shell-type Strichartz estimates}
\label{tab:example}
\begin{tabular}{|c|c|c|c|} 
\toprule
\textbf{shell-type Strichartz} & \textbf{Euclidean case} & \textbf{Waveguide case} & \textbf{Periodic case} \\
\midrule
Derivative loss & no loss & no loss for $\mathbb{R}^2\times \mathbb{T}$ and $\epsilon$-loss for $\mathbb{R}\times \mathbb{T}^d$ & $\epsilon$-loss\\
Globality & global-in-time & global-in-time for $\mathbb{R}^2\times \mathbb{T}$ & local-in-time \\
\bottomrule
\end{tabular}
\end{table}

It remains to study the case $\mathbb{R}^m\times \mathbb{T}^n$ ($m\geq 2,n\geq 1$\footnote{Expect for the case $m=2,n=1$, which is known due to this paper.}) for the globality and the derivative loss issue. We leave them for interested readers.

As direct PDE applications (see Section \ref{6}), we obtain the well-posedness results for the partially periodic Zakharov system following \cite{MR4665720} with proper modifications.

Lastly, as a comparison, we note that, for the standard Strichartz estimates for Schr\"odinger equations, one can make a similar summary as below due to existing results.\footnote{One can also compare Strichartz estimates in different settings from the aspect of the range of admissible pairs. (The range in the Euclidean setting is larger than its analogue in the periodic setting.) Moreover, one can also take two more cases, i.e. Schr\"odinger equations under a (partial) harmonic confinement, into considerations (see \cite{antonelli2015scattering,jao2016energy} and the references therein for more details). We leave them for interested readers. See also \cite{wang2013periodic} for a sharp Strichartz estimate for the hyperbolic case.} 

\begin{table}[htbp]
\centering
\caption{Summary of Strichartz estimates}
\label{tab:example}
\begin{tabular}{|c|c|c|c|} 
\toprule
\textbf{Strichartz estimate} & \textbf{Euclidean case} & \textbf{Waveguide case} & \textbf{Periodic case} \\
\midrule
Derivative loss & no loss (see \cite{keel1998endpoint}) & no loss for $\mathbb{R}\times \mathbb{T}$ (see \cite{takaoka20012d})& $\epsilon$-loss (see \cite{BD15})\\
Globality &  global-in-time (see \cite{keel1998endpoint}) & (weakly) global-in-time (see \cite{Barron14}) & local-in-time (see \cite{BD15})\\
\bottomrule
\end{tabular}
\end{table}
\begin{remark}
    By ``(weakly) global-in-time'', we mean that the Strichartz estimate in the waveguide setting is global-in-time but the time integrability is not as good as the Euclidean analogue (they are of type $l^qL^p_{t,x}$ ($q>p$)) where $l^q$ represents the time integrability. We refer to \cite{Barron14} for more details.
\end{remark}

\subsection*{Comparison between Strip-Type and Shell-Type Restrictions}
Strip-type Strichartz estimates focus on frequency localization along a single direction, capturing wave packets concentrated along narrow slabs, and are particularly useful for studying solutions with partial dispersion. In contrast, shell-type Strichartz estimates focus on frequencies constrained to a thin spherical shell, preserving radial symmetry while allowing angular freedom, and are crucial for understanding solutions with fixed energy levels. Technically, strip-type estimates rely more on anisotropic inequalities and wave packet decompositions, while shell-type estimates often invoke spherical harmonics and restriction theorems. Both of the two types of restrictions are natural choices in view of applications.

As shown in previous sections, shell-type Strichartz estimates and strip-type Strichartz estimates share an important feature: \textit{they can be compared with classical Strichartz estimates with dimension lower by one}. One difference is that strip-type Strichartz estimates are of \textit{full Strichartz range} (see Theorem \ref{LEM:Stri}), which is not proven yet for shell-type Strichartz estimates. We leave it for interested readers. In fact, if shell-type Strichartz estimates with full Strichartz range hold, Section \ref{new} can be easily\textit{ recovered} for the shell-restricted case. 

It is also worth emphasizing that both types of restrictions inherently require dimension $d \geq 2$. In one dimension, neither the concept of a strip nor a shell is meaningful: there are no transverse directions to define a strip, and the ``sphere'' $S^0$ consists of only two points, precluding any notion of angular dispersion. Thus, the phenomena captured by strip-type or shell-type estimates fundamentally rely on multi-dimensional spatial structures.
\subsection{Open questions of this research line} 

Finally, we remark on open problems and future directions.
\subsubsection{On shell-type Strichartz estimates and the Zakharov system}
On the \textbf{analysis} level, it is interesting to investigate whether the shell-type Strichartz estimates have derivative loss on the higher dimensional waveguide manifolds $\mathbb{R}^m \times \mathbb{T}^n$ ($d=m+n \geq 4$).\footnote{The estimates in 1D, 2D and 3D are clear. For cases $\mathbb{R}^m \times \mathbb{T}^{n}$ ($m\geq 2,m+n \geq 4$), we do not know if the derivative loss appears or not.} We also note that, even for the standard Strichartz estimates in the waveguide setting, this problem is interesting and not well studied yet (see Table \ref{tab:example}).\footnote{According to existing results, the only known case is Strichartz estimate on $\mathbb{R}\times \mathbb{T}$ (There is no derivative loss at the endpoint. See \cite{takaoka20012d}).}  In general, $\epsilon$-derivative loss will appear at the endpoint exponent by applying the decoupling method (see \cite{BD15}); when the exponent is away from the endpoint, it is possible to remove the derivative loss (see \cite{KV16} and \cite{Barron14} for the tori case and the waveguide case, respectively). For the endpoint case, other methods/ingredients are required (rather than the decoupling method) if one wants to establish no-derivative-loss estimates in the waveguide setting. Besides the derivative loss issue, it is also interesting to investigate whether the estimates are global-in-time. We leave these interesting problems for future studies.

On the \textbf{PDE} level, it is interesting to further investigate the well-posedness theory (such as the regularity threshold for well-posedness/ill-posedness) and even the long time behavior (such as scattering) for Zakharov systems on waveguide manifolds. Since there are some dispersive effects due to the Euclidean components, scattering behavior is also expected\footnote{Since the Zakharov system is a coupled system of NLS and nonlinear wave equation (NLW), studying the scattering for NLW on waveguide manifolds would be a first step.}, which is qualitatively distinct from the periodic case. Zakharov systems (and some other nonlinear dispersive models) on waveguide manifolds are not well studied yet, and there are many related interesting problems.\footnote{A general direction is to generalize ``NLS on waveguides manifolds" to ``nonlinear dispersive equations on waveguide manifolds".} Furthermore, besides the study of the (partially) periodic Zakharov system, one can investigate other applications of the shell-type Strichartz estimates in different settings.\footnote{We have presented one application in the Euclidean setting, i.e. proving well-posedness for supercritical NLS with restricted initial data. See Section \ref{new}.}
\subsubsection{On the strip-type Strichartz estimates and the supercritical NLS}
It is natural to ask: can the results for supercritical NLS in Section \ref{new} be further extended? It is possible to study the \textit{large data global well-posedness and scattering for supercritical NLS} with strip-restricted initial data. It is expected to obtain the large data global well-posedness and scattering for supercritical NLS with strip-restricted initial data, combined with classical I-method, random data theory for NLS or other methods (see \cite{tao2006nonlinear}). It is another interesting topic and we study it in our next paper \cite{next}. Moreover, one can also investigate the \textit{optimal local well-posedness} problem as stated in the end of Section \ref{new}.

Finally, to summarize the open problems discussed in this subsection, we refer to Table \ref{tab:open_problems} as below.

\begin{table}[h]
\centering
\caption{Key open problems}
\label{tab:open_problems}
\begin{tabular}{|l|l|}
\hline
\textbf{Problems} & \textbf{Related Materials} \\ 
\hline
Sharp (log-type) derivative loss for $\mathbb{R}\times\mathbb{T}^{d-1}$ and $\mathbb{T}^{d}$  & Sections \ref{3}, \ref{4}, \ref{5} and \cite{MR4665720} \\
Derivative loss issue for Strichartz estimates on $\mathbb{R}^m\times\mathbb{T}^n$ ($m\geq2,m+n\geq4$) & Sections \ref{3}, \ref{4}, \ref{5} \\
Full Strichartz range for shell-type Strichartz estimates & Sections \ref{3}, \ref{4}, \ref{5} \\
Optimal well-posedness theory for Zakharov systems on waveguides & Section \ref{6} \\
Scattering theory for Zakharov systems \& NLW on waveguides & Section \ref{6} \\
Optimal well-posedness theory for supercritical NLS with strip-restricted data & Section \ref{new} \\
Large data GWP and scattering for supercritical NLS with strip-restricted data & Section \ref{new} and \cite{next} \\
Understanding: random data theory v.s. strip-restriction condition & Section \ref{new} and \cite{deng2022random,deng2024invariant,bringmann2024invariant} \\
\hline
\end{tabular}
\end{table}

\section{Numerical Experiments and Verification of Estimates}\label{9}
In this section, we utilize numerical verifications to discuss shell-type Strichartz estimates on waveguide manifolds and tori, respectively. The results are consistent with their theoretic analogues established in this paper.

We consider the three-dimensional case as an example (see Table \ref{tab:3d}). It is surely possible to investigate numerical verifications for other dimensions, such as the 2D case, which is easier and the corresponding theoretic results are also clear. (For the $\mathbb{R}^2$ case, there is no derivative loss; for both the $\mathbb{R}\times \mathbb{T}$ case and the $\mathbb{T}^2$ case, there is an inevitable derivative loss.) In general, when the dimension grows, the numerical verifications will become much more complicated, and we leave the higher dimensional cases for interested scholars.

We validate the shell-type Strichartz estimates for the linear Schr\"odinger equation on three geometries\footnote{Since the Euclidean case is well-known, we omit it.}:
\begin{equation}
    \|e^{it\Delta}u_0\|_{L^4_{t,x}([0,1] \times M)} \leq C(N)\|u_0\|_{L^2}, \quad \text{supp}\,\hat{u}_0 \subset \{k \mid |k| \in [N-1, N+1]\},
\end{equation}
where \(M \in \{\mathbb{R}^2 \times \mathbb{T}, \mathbb{R} \times \mathbb{T}^2, \mathbb{T}^3\}\). Our simulations highlight the role of partial periodicity in derivative loss phenomena. We refer to Theorem \ref{mainthm on R2T}, Theorem \ref{counterexample on RTd-1}, Theorem \ref{mainthm on RmTn} and \cite{MR4665720} for the theoretic results.

\subsection{Implementation}
For each geometry, we use a pseudo-spectral method with the following configurations:
\begin{itemize}
    \item \textbf{Waveguide (\(\mathbb{R}^2 \times \mathbb{T}\))}: Spatial domain \([-8\pi, 8\pi]^2 \times [0, 2\pi]\) with \(256^2 \times 32\) grid points.
    \item \textbf{Partial Periodic (\(\mathbb{R} \times \mathbb{T}^2\))}: Spatial domain \([-8\pi, 8\pi] \times [0, 2\pi]^2\) with \(256 \times 32^2\) grid points.
    \item \textbf{Periodic (\(\mathbb{T}^3\))}: Uniform \(64^3\) grid on \([0, 2\pi]^3\).
\end{itemize}
Initial data \(u_0\) is generated with Fourier support restricted to shells of width 2 centered at \(N\).

To quantify the derivative loss, we perform linear regression on the log-log plot to estimate the scaling exponent $\alpha$ in $C(N) \sim N^\alpha$. The numerical results confirm our theoretical predictions.
\subsection{Results and Analysis}
\begin{table}[h]
    \centering
    \caption{Scaling of \(L^4_{t,x}\) norms with frequency \(N\)}
    \begin{tabular}{lcc}
        \toprule
        \textbf{Geometry} & \textbf{Observed Scaling \(C(N)\)} & \textbf{Theoretical Prediction} \\
        \midrule
        \(\mathbb{R}^2 \times \mathbb{T}\) & \(O(1)\) & No derivative loss (Theorem \ref{mainthm on R2T}) \\
        \(\mathbb{R} \times \mathbb{T}^2\) & \(O(N^{0.18})\) & Mild \(\epsilon\)-loss (Theorem \ref{counterexample on RTd-1}) \\
        \(\mathbb{T}^3\) & \(O(N^{0.30})\) & \(\epsilon\)-loss (\cite{MR4665720}) \\
        \bottomrule
    \end{tabular}
    \label{tab:scaling}
\end{table}

Key observations:
\begin{itemize}
    \item \textbf{No Derivative Loss}: For \(\mathbb{R}^2 \times \mathbb{T}\), the \(L^4\)-spacetime norm is bounded uniformly in \(N\), confirming Theorem \ref{mainthm on R2T}.
    \item \textbf{Intermediate Behavior}: \(\mathbb{R} \times \mathbb{T}^2\) exhibits a weaker scaling (\(N^{0.18}\)) compared to \(\mathbb{T}^3\) (\(N^{0.30}\)), reflecting the suppression of energy transfer due to the Euclidean component.
    \item \textbf{Consistency with Theory}: All cases align with theoretical predictions.
\end{itemize}

\begin{figure}[h]
    \centering
    \includegraphics[width=0.8\textwidth]{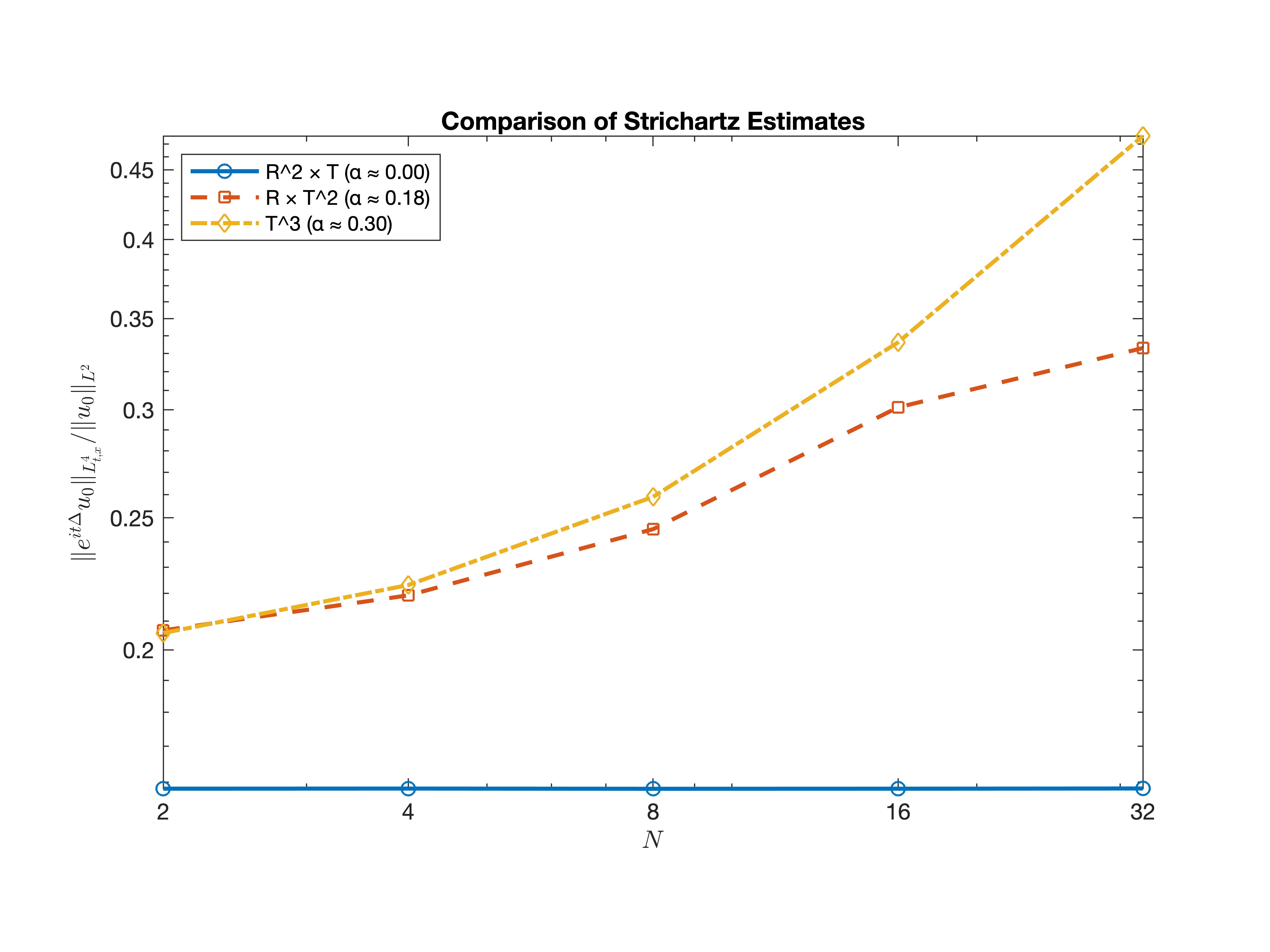}
    \caption{Log-log plot of \(\|e^{it\Delta}u_0\|_{L^4}/\|u_0\|_{L^2}\) vs. \(N\) for \(N \in [4, 32]\).}
    \label{fig:scaling_plot}
\end{figure}

\subsection{Discussions}
The numerical results reveal an interesting hierarchy in derivative loss phenomena:
\begin{itemize}
\item \textbf{Strong dispersion}: The $\mathbb{R}^2\times\mathbb{T}$ case benefits from two Euclidean directions providing strong dispersion that suppresses energy transfer to high frequencies, preventing derivative loss.

\item \textbf{Intermediate case}: For $\mathbb{R}\times\mathbb{T}^2$, the single Euclidean direction leads to partial dispersion, resulting in a subcritical derivative loss ($\alpha=0.18$) that is significantly smaller than the periodic case.

\item \textbf{Periodic confinement}: The $\mathbb{T}^3$ case shows the strongest derivative loss ($\alpha=0.30$) due to complete lack of dispersion in all directions.
\end{itemize}

\subsection{Physical Interpretation}
The observed scaling hierarchy can be interpreted via \textit{dispersion strength}:
\begin{itemize}
\item \textbf{Strong dispersion} ($\mathbb{R}^2\times\mathbb{T}$): Two Euclidean dimensions provide sufficient dispersion to suppress energy cascade to high modes, yielding $C(N)\sim 1$.
\item \textbf{Weak confinement} ($\mathbb{R}\times\mathbb{T}^2$): Single Euclidean dimension leads to partial localization, causing mild derivative loss ($\alpha=0.18$).
\item \textbf{Full confinement} ($\mathbb{T}^3$): Complete absence of dispersion results in strong derivative loss ($\alpha=0.30$).
\end{itemize}

These observations suggest a general principle: the derivative loss in Strichartz estimates is controlled by the \textit{dimension deficit} between the Euclidean and periodic components. This provides guidance for designing waveguide structures in photonic applications where dispersion properties are crucial. Open questions include what the optimal derivative losses are for both the \(\mathbb{R} \times \mathbb{T}^2\) case and the \( \mathbb{T}^3\) case respectively, and if analogous behavior holds for higher dimensions (e.g., \(\mathbb{R}^m \times \mathbb{T}^n\) with \(m+n \geq 4\)). 

\subsection{Remarks on the classical Strichartz estimates}
Besides the shell-type estimates, one can also investigate the classical Strichartz estimates via numerical verifications with modifications. One may still consider the three-dimensional case as an example. It is fine to investigate numerical verifications for other dimensions, such as the $L^4$-Strichartz estimates in 2D (considering the $\mathbb{R} \times \mathbb{T}, \mathbb{T}^2$ cases), which would be easier and the corresponding theoretic results are more clear.

One can validate the local-in-time Strichartz estimates for initial data with Fourier support in \textbf{balls}:
\begin{equation}
    \|e^{it\Delta}u_0\|_{L^{10/3}_{t,x}([0,1] \times M)} \leq C(N)\|u_0\|_{L^2}, \quad \text{supp}\,\hat{u}_0 \subset \{k \mid |k| \leq N\},
\end{equation}
where \(M \in \{\mathbb{R}^2 \times \mathbb{T}, \mathbb{R} \times \mathbb{T}^2, \mathbb{T}^3\}\). We refer to \cite{BD15,KV16,Barron14} for the existing  theoretic results, i.e. Strichartz estimates in periodic setting and waveguide setting respectively. \footnote{The 3D Strichartz estimates at the endpoint $\frac{10}{3}$ all have derivative loss and it is not proven yet whether the derivative loss is inevitable. The only clear case in the waveguide setting is the $L^4$ (endpoint) Strichartz estimate on $\mathbb{R}\times \mathbb{T}$ (There is no derivative loss at the endpoint. See \cite{takaoka20012d}).}

We leave it for interested readers since the process is similar.\vspace{3mm}

\subsection*{Data Availability Statement:} The MATLAB codes used to generate the numerical results in Section \ref{9} are upon request from the corresponding author. The implementation includes:
\begin{itemize}
\item Spectral discretization of Laplacian on mixed geometries;
\item Time-splitting algorithm for linear Schr\"odinger equation;
\item Scripts for automated parameter studies.\vspace{3mm}
\end{itemize}

\bibliographystyle{abbrv}

\bibliography{BG}

\begin{thebibliography}{10}

\bibitem{antonelli2015scattering}
P.~Antonelli, R.~Carles, and J.~D. Silva.
\newblock Scattering for nonlinear {S}chr{\"o}dinger equation under partial harmonic confinement.
\newblock {\em Comm. Math. Phys.}, 334(1):367--396, 2015.

\bibitem{Barron14}
A.~Barron.
\newblock On global-in-time {S}trichartz estimates for the semiperiodic {S}chr\"odinger equation.
\newblock {\em Anal. PDE}, 14(4):1125--1152, 2021.

\bibitem{MR4219972}
A.~Barron, M.~Christ, and B.~Pausader.
\newblock Global endpoint {S}trichartz estimates for {S}chr\"odinger equations on the cylinder {$\Bbb R\times \Bbb T$}.
\newblock {\em Nonlinear Anal.}, 206:Paper No. 112172, 7, 2021.

\bibitem{basu2021stationarysetmethodestimating}
S.~Basu, S.~Guo, R.~Zhang, and P.~Zorin-Kranich.
\newblock A stationary set method for estimating oscillatory integrals.
\newblock {\em arXiv preprint arXiv:2103.08844}, 2021.

\bibitem{bejenaru20092d}
I.~Bejenaru, S.~Herr, J.~Holmer, and D.~Tataru.
\newblock On the 2{D} {Z}akharov system with ${L}^2$ {S}chr{\"o}dinger data.
\newblock {\em Nonlinearity}, 22(5):1063--1089, 2009.

\bibitem{bourgain1993fourier}
J.~Bourgain.
\newblock Fourier transform restriction phenomena for certain lattice subsets and applications to nonlinear evolution equations. {I}. {S}chr{\"o}dinger equations.
\newblock {\em Geom. Funct. Anal}, 3(2):107--156, 1993.

\bibitem{Bourgain13}
J.~Bourgain.
\newblock Moment inequalities for trigonometric polynomials with spectrum in curved hypersurfaces.
\newblock {\em Israel J. Math.}, 193(1):441--458, 2013.

\bibitem{bourgain1996wellposedness}
J.~Bourgain and J.~Colliander.
\newblock On wellposedness of the {Z}akharov system.
\newblock {\em Internat. Math. Res. Notices}, 1996(11):515--546, 1996.

\bibitem{BD15}
J.~Bourgain and C.~Demeter.
\newblock The proof of the {$l^2$} decoupling conjecture.
\newblock {\em Ann. of Math. (2)}, 182(1):351--389, 2015.

\bibitem{MR2860188}
J.~Bourgain and L.~Guth.
\newblock Bounds on oscillatory integral operators based on multilinear estimates.
\newblock {\em Geom. Funct. Anal.}, 21(6):1239--1295, 2011.

\bibitem{bringmann2024invariant}
B.~Bringmann, Y.~Deng, A.~R. Nahmod, and H.~Yue.
\newblock Invariant {G}ibbs measures for the three dimensional cubic nonlinear wave equation.
\newblock {\em Invent. Math.}, 236(3):1133--1411, 2024.

\bibitem{CCT}
M.~Christ, J.~Colliander, and T.~Tao.
\newblock Ill-posedness for nonlinear {S}chr{\"o}dinger and wave equations.
\newblock {\em arXiv preprint arXiv:0311048}, 2003.

\bibitem{DPST}
D.~De~Silva, N.~Pavlovi\'{c}, G.~Staffilani, and N.~Tzirakis.
\newblock Global well-posedness for a periodic nonlinear {S}chr\"{o}dinger equation in 1{D} and 2{D}.
\newblock {\em Discrete Contin. Dyn. Syst.}, 19(1):37--65, 2007.

\bibitem{MR3971577}
C.~Demeter.
\newblock {\em Fourier restriction, decoupling, and applications}.
\newblock Number 184. Cambridge University Press, 2020.

\bibitem{MR4825203}
Y.~Deng.
\newblock On a bilinear restriction estimate for {S}chr\"odinger equations on 2{D} waveguide.
\newblock {\em J. Differential Equations}, 416:1825--1836, 2025.

\bibitem{deng2024bilinear}
Y.~Deng, B.~Di, C.~Fan, and Z.~Zhao.
\newblock Bilinear estimate for {S}chr{\"o}dinger equation on $\mathbb{R} \times \mathbb{T}$.
\newblock {\em arXiv preprint arXiv:2407.05654}, 2024.

\bibitem{MR4782142}
Y.~Deng, C.~Fan, K.~Yang, Z.~Zhao, and J.~Zheng.
\newblock On bilinear {S}trichartz estimates on waveguides with applications.
\newblock {\em J. Funct. Anal.}, 287(9):Paper No. 110595, 2024.

\bibitem{deng2024invariant}
Y.~Deng, A.~Nahmod, and H.~Yue.
\newblock Invariant {G}ibbs measures and global strong solutions for nonlinear {S}chr{\"o}dinger equations in dimension two.
\newblock {\em Ann. of Math.}, 200(2):399--486, 2024.

\bibitem{deng2022random}
Y.~Deng, A.~R. Nahmod, and H.~Yue.
\newblock Random tensors, propagation of randomness, and nonlinear dispersive equations.
\newblock {\em Invent. Math.}, 228(2):539--686, 2022.

\bibitem{next}
Y.~Deng, H.~Wang, Y.~Wang, and Z.~Zhao.
\newblock On long time dynamics of supercritical {NLS}: {R}andom data theory v.s. {F}ourier retriction.
\newblock {\em Preprint}, 2025+.

\bibitem{dodson2019defocusing}
B.~Dodson.
\newblock {\em Defocusing nonlinear {S}chr{\"o}dinger equations}, volume 217.
\newblock Cambridge University Press, 2019.

\bibitem{MR3702674}
X.~Du, L.~Guth, and X.~Li.
\newblock A sharp {S}chr\"odinger maximal estimate in {$\Bbb R^2$}.
\newblock {\em Ann. of Math. (2)}, 186(2):607--640, 2017.

\bibitem{foschi2005inhomogeneous}
D.~Foschi.
\newblock Inhomogeneous {S}trichartz estimates.
\newblock {\em J. Hyperbolic Differ. Equ.}, 2(1):1--24, 2005.

\bibitem{ginibre1997cauchy}
J.~Ginibre, Y.~Tsutsumi, and G.~Velo.
\newblock On the {C}auchy problem for the {Z}akharov system.
\newblock {\em J. Funct. Anal.}, 151(2):384--436, 1997.

\bibitem{ginibre1995generalized}
J.~Ginibre and G.~Velo.
\newblock {G}eneralized {S}trichartz inequalities for the wave equation.
\newblock {\em J. Funct. Anal.}, 133(1):50--68, 1995.

\bibitem{Herr}
S.~Herr and B.~Kwak.
\newblock Strichartz estimates and global well-posedness of the cubic {NLS} on $\mathbb{T}^2$.
\newblock {\em Forum Math. Pi}, 12(e14), 2024.

\bibitem{hong2017strichartz}
Y.~Hong.
\newblock Strichartz estimates for ${N}$-body {S}chr{\"o}dinger operators with small potential interactions.
\newblock {\em Discrete Contin. Dyn. Syst.}, 37(10):5355--5365, 2017.

\bibitem{jao2016energy}
C.~Jao.
\newblock The energy-critical quantum harmonic oscillator.
\newblock {\em Comm. Partial Differential Equations}, 41(1):79--133, 2016.

\bibitem{keel1998endpoint}
M.~Keel and T.~Tao.
\newblock {E}ndpoint {S}trichartz estimates.
\newblock {\em Amer. J. Math.}, 120(5):955--980, 1998.

\bibitem{KV16}
R.~Killip and M.~Vi\c{s}an.
\newblock Scale invariant {S}trichartz estimates on tori and applications.
\newblock {\em Math. Res. Lett.}, 23(2):445--472, 2016.

\bibitem{killip2013nonlinear}
R.~Killip and M.~Vişan.
\newblock Nonlinear {S}chr{\"o}dinger equations at critical regularity.
\newblock In {\em Evolution equations}, volume~17 of {\em Clay Math. Proc.}, pages 325--437. American Mathematical Society, Providence, RI, USA, 2013.

\bibitem{MR4665720}
S.~Kinoshita, S.~Nakamura, and A.~Sanwal.
\newblock Decoupling inequality for paraboloid under shell type restriction and its application to the periodic {Z}akharov system.
\newblock {\em Comm. Math. Phys.}, 404(2):883--926, 2023.

\bibitem{kishimoto2013local}
N.~Kishimoto.
\newblock Local well-posedness for the {Z}akharov system on the multidimensional torus.
\newblock {\em J. Anal. Math.}, 119(1):213--253, 2013.

\bibitem{klainerman1993erratum}
S.~Klainerman and M.~Machedon.
\newblock Space-time estimates for null forms and the local existence theorem.
\newblock {\em Comm. Pure Appl. Math.}, 46(9):1221--1268, 1993.

\bibitem{kwak2024global}
B.~Kwak.
\newblock Global well-posedness of the energy-critical nonlinear {S}chr{\"o}dinger equations on $\mathbb{T}^d$.
\newblock {\em arXiv preprint arXiv:2411.18163}, 2024.

\bibitem{kwak2024critical}
B.~Kwak and S.~Kwon.
\newblock Critical local well-posedness of the nonlinear {S}chr{\"o}dinger equation on the torus.
\newblock {\em Ann. Inst. H. Poincar{\'e} C Anal. Non Lin{\'e}aire}, 2024.

\bibitem{lindblad1995existence}
H.~Lindblad and C.~D. Sogge.
\newblock On existence and scattering with minimal regularity for semilinear wave equations.
\newblock {\em J. Funct. Anal.}, 130(2):357--426, 1995.

\bibitem{ozawa1992existence}
T.~Ozawa and Y.~Tsutsumi.
\newblock Existence and smoothing effect of solutions for the {Z}akharov equations.
\newblock {\em Publ. Res. Inst. Math. Sci.}, 28(3):329--361, 1992.

\bibitem{sogge2017fourier}
C.~D. Sogge.
\newblock {\em Fourier integrals in classical analysis}, volume 210.
\newblock Cambridge University Press, 2017.

\bibitem{takaoka20012d}
H.~Takaoka and N.~Tzvetkov.
\newblock On 2{D} nonlinear {S}chr{\"o}dinger equations with data on $\mathbb{R} \times \mathbb{T}$.
\newblock {\em J. Funct. Anal.}, 182(2):427--442, 2001.

\bibitem{tao2006nonlinear}
T.~Tao.
\newblock {\em Nonlinear dispersive equations: local and global analysis}.
\newblock Number 106. American Mathematical Soc., 2006.

\bibitem{tzvetkov2019random}
N.~Tzvetkov.
\newblock Random data wave equations.
\newblock {\em Singular random dynamics}, 2253:221--313, 2019.

\bibitem{tzvetkov2012small}
N.~Tzvetkov and N.~Visciglia.
\newblock Small data scattering for the nonlinear {S}chr{\"o}dinger equation on product spaces.
\newblock {\em Comm. Partial Differential Equations}, 37(1):125--135, 2012.

\bibitem{wang2013periodic}
Y.~Wang.
\newblock Periodic cubic hyperbolic {S}chr{\"o}dinger equation on $\mathbb{T}^2$.
\newblock {\em J. Funct. Anal.}, 265(3):424--434, 2013.

\bibitem{zakharov1972collapse}
V.~E. Zakharov et~al.
\newblock Collapse of {L}angmuir waves.
\newblock {\em Sov. Phys. JETP}, 35(5):908--914, 1972.

\end{thebibliography}
\end{document}